\numberwithin{equation}{section}
\newtheorem{theorem}{Theorem}[section]
\newtheorem{corollary}[theorem]{Corollary}
\newtheorem{lemma}[theorem]{Lemma}
\newtheorem{proposition}[theorem]{Proposition}
\theoremstyle{definition}
\newtheorem{remark}[theorem]{Remark}
\theoremstyle{definition}
\theoremstyle{definition}
\def\dashint{\operatorname%
{\,\,\text{\bf-}\kern-.98em\DOTSI\intop\ilimits@\!\!}}
\def\\det{\text{det}}
\def\.5{\frac{1}{2}}
\def\bA{\mathbb{A}}
\def\bR{\mathbb{R}}
\def\cA{\mathcal{A}}
\def\cD{\mathcal{D}}
\def\cP{\mathcal{P}}
\def\cM{\mathcal{M}}
\newcommand{\RN}[1]{%
  \textup{\uppercase\expandafter{\romannumeral#1}}%
}
\renewcommand{\epsilon}{\varepsilon}
\begin{document}
\title[Nonlocal parabolic equations]{On Schauder estimates for a class of nonlocal fully nonlinear parabolic equations}
\author[H. Dong]{Hongjie Dong}
\address[H. Dong]{Division of Applied Mathematics, Brown University,
182 George Street, Providence, RI 02912, USA}
\email{Hongjie\_Dong@brown.edu}
\thanks{H. Dong and H. Zhang were partially supported by the NSF under agreements DMS-1056737 and DMS-1600593.}

\author[H. Zhang]{Hong Zhang}
\address[H. Zhang]{Division of Applied Mathematics, Brown University,
182 George Street, Providence, RI 02912, USA}
\email{Hong\_Zhang@brown.edu}
%\thanks{H. Zhang was partially supported by the NSF under agreement DMS-1056737.}

\begin{abstract}
We obtain Schauder estimates for a class of concave fully nonlinear nonlocal parabolic equations of order $\sigma\in (0,2)$ with rough and non-symmetric kernels. We also prove that the solution to a translation invariant equation with merely bounded data is $C^\sigma$ in $x$ variable and $\Lambda^1$ in $t$ variable, where $\Lambda^1$ is the Zygmund space. From these results, we can derive the corresponding results for nonlocal elliptic equations with rough and non-symmetric kernels, which are new even in this case.
\end{abstract}
\maketitle

\section{introduction}
This paper is devoted to the study of Schauder estimates for a class of concave fully nonlinear nonlocal parabolic equations. There is a vast literature on Schauder estimates for classical elliptic and parabolic equations, for instance, see \cite{GT01,Kry97,CC95}.  Since the work by Caffarelli and Silvestre \cite{CS09,CSARMA,CS11}, nonlocal equations, which naturally arise from models in physics, engineering, and finance that involve long range interactions (for instance, see \cite{ContTankov}),  attract an increasing level of interest recently.
An example of  nonlocal operators, which is associated with pure jump processes (see, for instance, \cite{MP92}), is the following
\begin{align}
&L_au=\int_{\bR^d}\big(u(t,x+y)-u(t,x)-y^TDu(t,x)\big)K_a(t,x,y)\,dy\quad \text{for}\,\, \sigma\in (1,2),\nonumber\\
&L_au=\int_{\bR^d}\big(u(t,x+y)-u(t,x)-y^TDu(t,x)\chi_{B_1}\big)K_a(t,x,y)\,dy\quad  \text{for}\,\, \sigma=1\nonumber\\
&\quad\quad\text{with}\quad \int_{S_r}yK_a(t,x,y)\,ds=0\quad \forall\, r>0,\label{eq10.58} \\
&L_au=\int_{\bR^d}\big(u(t,x+y)-u(t,x)\big)K_a(t,x,y)\,dy\quad \text{for} \,\, \sigma\in (0,1),\nonumber
\end{align}
where
\begin{equation*}
K_a\in \mathcal{L}_0:=\Big\{K: \frac{\lambda(2-\sigma)}{|y|^{d+\sigma}}\le K(t,x,y)\le\frac{\Lambda(2-\sigma)}{|y|^{d+\sigma}}\Big\}
\end{equation*}
for some ellipticity constants $0<\lambda\le \Lambda$, with no regularity assumption imposed with respect to the $y$ variable.
This type of nonlocal operators was first considered by Komatsu \cite{MR746702}, Mikulevi$\check{\text{c}}$ius and Pragarauskas \cite{MP92, MR3201992}, and later by Dong and Kim \cite{DKDCDS,DKJFA12}, and Schwab and Silvestre \cite{SS14}, to name a few. In particular, the condition \eqref{eq10.58} appeared in all these references except for \cite{SS14}, where a similar cancellation condition was imposed. Notice that this class of operators is scaling invariant.

The fully nonlinear nonlocal parabolic equation that we are interested in is of the form
\begin{equation}
u_t=\inf_{a\in \cA}(L_au+f_a),\label{eq 1}
\end{equation}
%in $(-1,0)\times B_1$,
where $K_a\in \mathcal{L}_0$ for  $a\in \mathcal{A}$ and $\cA$ is an index set. For fully nonlinear second-order equations with $f_{a}\equiv 0$, the celebrated $C^{2,\alpha}$ estimate was established independently by Evans \cite{Evans82} and Krylov \cite{Kry82} in early nineteen-eighties. Nonhomogeneous second-order equations were considered a bit later by Safonov \cite{Saf88}. Recently, Caffarelli and Silvestre \cite{CS11} investigated the nonlocal version of Evans-Krylov theorem with translation invariant and symmetric kernels, i.e., $K_a(x,y)=K_a(y)=K_a(-y)$, satisfying additional regularity assumptions
\begin{equation}\label{eq 12.141}
[K_a]_{C^2(\bR^d\setminus B_\rho)}\le \Lambda(2-\sigma)\rho^{-d-\sigma-2}.
\end{equation}
More recently, their result was extended to nonhomogeneous fully nonlinear elliptic equations by Jin and Xiong \cite{Jin2015} by using a recursive Evans-Krylov theorem.
At almost the same time, Serra \cite{Serra15} removed the regularity assumption \eqref{eq 12.141} and proved the Evans-Krylov theorem and Schauder estimates with symmetric kernels. His proof relies on a Liouville type theorem and a blow-up analysis.
 In this paper, we do not assume that the kernels are symmetric, which is certainly more general than the kernels considered in \cite{CS11,Jin2015, Serra15}.
Specifically,  when the kernels are symmetric, \eqref{eq10.58}
is satisfied automatically, and
\begin{equation*}
L_au=\frac{1}{2}\int_{\bR^d}\big(u(x+y)+u(x-y)-2u(x)\big)K_a(y)\,dy,
\end{equation*}
which is the form of the operators considered in \cite{CS11, Jin2015, Serra15}.

For equations with non-symmetric kernels, Dong and Kim \cite{DKJFA12,DKDCDS} proved $L_p$ and Schauder estimates for linear elliptic equations. Chang-Lara and  D\'avila \cite{ChangD12,ChangD13} considered nonlocal parabolic equations with non-symmetric kernels and critical drift, and proved the corresponding $C^\alpha$ and $C^{1,\alpha}$ estimate. Recently in \cite{ChangD15}, they  proved a version of the Evans-Krylov theorem for concave nonlocal parabolic equations with critical drift, where they assumed the kernels to be non-symmetric but translation invariant and smooth \eqref{eq 12.141}. We also mention that Schauder estimates for linear nonlocal parabolic equations were studied in \cite{MR3393263,MR3201992}. %We mention that the kernel considered by Chang-Lara and   D\'avila \cite{ChangD15} is more general than the kernel in this paper and we will explain it in detail later.

The objective of this paper is twofold. First we extend the previous results in \cite{Serra15,ChangD15,Jin2015,MR3393263} to include concave nonlocal parabolic equations with non-symmetric rough kernels. More specifically, for any small $\alpha$, if $f_a$ and $K_a(t,x,y)$ are $C^\alpha$ in $x$ and $C^{\alpha/\sigma}$ in $t$, then we have the following $C^{1+\alpha/\sigma,\sigma+\alpha}$ a priori estimate of any smooth solution $u$ to \eqref{eq 1} in $(-1,0)\times B_1$.

\begin{theorem}\label{thm 1}
Let $\sigma\in (0,2)$, $0<\lambda\le \Lambda<\infty$, and $\cA$ be an index set. There is a constant $\hat{\alpha}\in (0,1)$ depending on $d,\sigma,\lambda$, and $\Lambda$ so that the following holds. Let $\alpha\in (0, \hat{\alpha})$ such that $\sigma+\alpha$ is not an integer.  Assume $K_a\in \mathcal{L}_0$ and satisfies \eqref{eq10.58} when $\sigma=1$, and
\begin{equation}
                                    \label{eq 11.051}
\big|K_a(t,x,y)-K_a(t',x',y)\big|\le A\big(|x-x'|^\alpha+|t-t'|^{\alpha/\sigma}\big)\frac{\Lambda(2-\sigma)}{|y|^{d+\sigma}},
\end{equation}
where $A\ge 0$ is a constant.
% and  $[\sigma+\alpha]<\sigma+\alpha^\prime$.
 Suppose $u\in C^{1+\alpha/\sigma,\sigma+\alpha}(Q_1)\cap C^{\alpha/\sigma,\alpha}((-1,0)\times\bR^d)$ is a solution of
\begin{equation}
                                \label{eq 1110.1}
u_t=\inf_{a\in \mathcal{A}}(L_a u+f_a)\quad \text{in}\,\, Q_1,
\end{equation}
where $f_a\in C^{\alpha/\sigma,\alpha}(Q_1)$ satisfying
$$
C_0:=\sup_{a\in \cA}[f_a]_{\alpha/\sigma,\alpha;Q_1}<\infty,\quad
\sup_{(t,x)\in Q_1}\Big|\inf_{a\in \mathcal{A}}f_a(t,x)\Big|<\infty.
$$
Then,
\begin{equation}\label{eq 12.142}
[u]_{1+\alpha/\sigma,\alpha+\sigma;Q_{1/2}}\le C\|u\|_{\alpha/\sigma,\alpha;(-1,0)\times\bR^d}+CC_0,
\end{equation}
where $C>0$ is a constant depending only on $d,\lambda,\Lambda,\alpha, A,$ and $\sigma$, and is uniformly bounded as $\sigma\to 2$.
\end{theorem}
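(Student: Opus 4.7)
The plan is a freezing-of-coefficients / Campanato iteration argument, built on top of a translation-invariant Evans-Krylov theorem. As a prerequisite (expected to be established elsewhere in the paper), I take available an interior $C^{\sigma+\alpha_0}$ a priori estimate---for some $\alpha_0 \in (0,1)$ depending only on $d,\sigma,\lambda,\Lambda$---for solutions $v$ of the frozen equation $v_t = \inf_{a\in\cA}(\bar L_a v + c_a)$ on $Q_1$, where $\bar L_a$ is translation-invariant (with kernel in $\mathcal{L}_0$, satisfying the analog of \eqref{eq10.58} when $\sigma = 1$) and $c_a \in \bR$. One then restricts $\hat\alpha$ to lie strictly below $\alpha_0$.

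Step 1 (Freezing). Fix $(t_0, x_0) \in Q_{1/2}$; translating, assume $(t_0,x_0) = (0,0)$. For small $r > 0$, let $\bar L_a$ be the operator of the form displayed in \eqref{eq10.58} with kernel $K_a(0,0,y)$, and let $v$ solve $v_t = \inf_{a\in\cA}(\bar L_a v + f_a(0,0))$ in $Q_r$, with $v = u$ on the parabolic boundary of $Q_r$. The translation-invariant estimate, after parabolic rescaling, gives
$$\inf_P \|v - P\|_{L^\infty(Q_\rho)} \leq C (\rho/r)^{\sigma+\alpha_0} \|v\|_{L^\infty(Q_r)}$$
for $\rho \leq r/2$, with the infimum over parabolic polynomials of the appropriate degree (constant in $x$ when $\sigma < 1$, affine in $x$ when $\sigma \geq 1$).

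Step 2 (Perturbation error). The difference $w := u - v$ vanishes on the parabolic boundary of $Q_r$ and, by concavity of $\inf_a$, satisfies Pucci-extremal inequalities with source $g$ controlled by
$$\|g\|_{L^\infty(Q_r)} \leq \sup_{a\in\cA} \|(L_a - \bar L_a) u\|_{L^\infty(Q_r)} + \sup_{a\in\cA}\|f_a - f_a(0,0)\|_{L^\infty(Q_r)}.$$
By \eqref{eq 11.051}, the first term is bounded by $A r^\alpha \Lambda(2-\sigma)$ times the integral of $|u(t,x+y) - u(t,x) - y\cdot Du(t,x)\chi_{B_1}|/|y|^{d+\sigma}$, which in turn is controlled---by splitting $\int = \int_{|y|\leq 1}+\int_{|y|>1}$ and invoking the a priori bound $\|u\|_{\alpha/\sigma,\alpha;(-1,0)\times\bR^d}$---by a constant times $\|u\|_{\alpha/\sigma,\alpha;(-1,0)\times\bR^d}$. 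The second term is bounded by $C_0 r^\alpha$. Hence $\|g\|_{L^\infty(Q_r)} \leq Cr^\alpha M$, with $M := \|u\|_{\alpha/\sigma,\alpha;(-1,0)\times\bR^d} + C_0$. Applying the interior $L^\infty$ (Alexandrov-Bakelman-Pucci type) bound for the Pucci inequality with $w = 0$ on the parabolic boundary of $Q_r$ yields $\|w\|_{L^\infty(Q_{r/2})} \leq C r^{\sigma+\alpha} M$.

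Step 3 (Campanato iteration). Combining the two estimates, for $\rho \leq r/2$,
$$\inf_P\|u - P\|_{L^\infty(Q_\rho)} \leq C\bigl[(\rho/r)^{\sigma+\alpha_0} (\|u\|_{L^\infty(Q_r)} + r^{\sigma+\alpha} M) + r^{\sigma+\alpha} M\bigr].$$
Choosing $\rho = \theta r$ with $\theta > 0$ small but fixed (so that $\theta^{\sigma+\alpha_0} < \frac{1}{2}\theta^{\sigma+\alpha}$) and iterating over $r_k = \theta^k$, a standard Campanato-type lemma yields $\inf_P \|u - P\|_{L^\infty(Q_\rho)} \leq C\rho^{\sigma+\alpha} M$ uniformly in $(t_0, x_0)\in Q_{1/2}$. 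The parabolic Campanato characterization of $C^{1+\alpha/\sigma,\sigma+\alpha}$ then delivers \eqref{eq 12.142}. Uniformity as $\sigma \uparrow 2$ is preserved by carrying the factor $(2-\sigma)$ through the ellipticity bounds and through the constants at every step.

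The principal obstacle is the translation-invariant Evans-Krylov estimate underpinning Step 1. For symmetric kernels, one exploits $u(x+y)+u(x-y)-2u(x)$ to cast second differences as a bona fide elliptic object; for the non-symmetric kernels here, this is unavailable, and one must argue via incremental quotients together with the concavity of the infimum (in the spirit of Serra and Chang-Lara--D\'avila), taking particular care in the critical case $\sigma = 1$ where the drift generated by the incremental quotients must be absorbed using the cancellation hypothesis \eqref{eq10.58}. A secondary difficulty in Step 2 is controlling the near-origin part of the integral of $|K_a - \bar K_a|$ against second differences of $u$ when $\sigma \leq 1$; there the ``second-order'' term $y\cdot Du\,\chi_{B_1}$ in \eqref{eq10.58} is either absent (for $\sigma < 1$) or requires the cancellation condition, and one must be attentive to keep the resulting constants scaling-invariant.
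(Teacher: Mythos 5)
Your plan differs from the paper's in two places: (a) the paper proves the translation-invariant case not as a black box but via a blow-up argument reduced to a Liouville theorem (Theorem~\ref{thm 10.251}), and (b) the passage to $(t,x)$-dependent kernels is done with a cutoff $v=\eta u$, estimates on the commutator terms $g_a$ and $h_a$, and an iteration over the nested cylinders $Q^k$ rather than a classical Campanato iteration. The Campanato route is a legitimate alternative strategy, but as written it has a genuine gap in Step~2.

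The claimed estimate $\sup_a\|(L_a-\bar L_a)u\|_{L^\infty(Q_r)}\le C r^\alpha\|u\|_{\alpha/\sigma,\alpha;(-1,0)\times\bR^d}$ is false. From \eqref{eq 11.051} you do obtain the factor $Ar^\alpha\Lambda(2-\sigma)$, but what remains is
$$\int_{\bR^d}\frac{|\delta u(t,x,y)|}{|y|^{d+\sigma}}\,dy,$$
and for the near-origin part $\int_{|y|\le 1}$ the integrand decays like $|y|^{-d-\sigma}|\delta u|$, so one needs $|\delta u(t,x,y)|\lesssim |y|^{\sigma+\epsilon}$ for some $\epsilon>0$. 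This is a $C^{\sigma+\epsilon}$ bound on $u$ (e.g.\ $[Du]_{\beta}$ for $\beta>\sigma-1$ when $\sigma>1$), not a $C^{\alpha/\sigma,\alpha}$ bound, so $M=\|u\|_{\alpha/\sigma,\alpha}+C_0$ is insufficient: the quantity you are trying to estimate in \eqref{eq 12.142} reappears on the right-hand side. This is precisely why the paper keeps $[v]_{1+\alpha/\sigma,\alpha+\sigma}$ and $\|D^2u\|_{L_\infty}$ in \eqref{eq4.19}--\eqref{eq 1.211} and then absorbs them via interpolation and the $Q^k$ iteration with a small-parameter scaling. Your remark at the end flags a near-origin difficulty only for $\sigma\le 1$, but it is present for every $\sigma$.

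Two secondary issues. First, for a nonlocal operator the frozen comparison function $v$ must agree with $u$ on the entire exterior $((-r^\sigma,0)\times\bR^d)\setminus Q_r$, not merely on the parabolic boundary of $Q_r$; otherwise $\bar L_a v$ is undefined (or arbitrary). Second, and relatedly, the oscillation estimate in Step~1 cannot have only $\|v\|_{L^\infty(Q_r)}$ on the right: the nonlocal Evans--Krylov bound necessarily sees the tail of $v$, which equals the tail of $u$ and is not controlled by $\|v\|_{L^\infty(Q_r)}$. This is exactly the reason the H\"older norm of $u$ in $(-1,0)\times\bR^d$ appears on the right of \eqref{eq 12.142} and cannot be replaced by $\|u\|_{L^\infty}$, as the paper notes. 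Any Campanato-style rewrite would have to carry both the higher-order interior norm and the tail through the iteration and absorb them at the end; the present write-up does not do this.
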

In the theorem above, $\|\cdot\|_{\alpha/\sigma,\alpha;\Omega}$ is the H\"older norm of order $\alpha/\sigma$ in $t$ and $\alpha$ in $x$ with underlying domain $\Omega$. We used $Q_r$ to denote the parabolic cylinder with radius $r$ centered at the origin. For precise definitions, see Section 2. As pointed out in \cite{Serra15}, the $C^{\alpha/\sigma,\alpha}$ H\"older norm of $u$ on the right-hand side of \eqref{eq 12.142} is necessary and cannot be replaced by the $L_\infty$ norm or any lower-order H\"older norm of $u$. We also note that by keeping track of the constants in the proofs below, in the symmetric case, if $\sigma \in [\sigma_0, 2)$ for some $\sigma_0 \in (0,1)$, then
the constant $C$ in \eqref{eq 12.142} depends on $\sigma_0$, not $\sigma$.
In the non-symmetric case, if $0 < \sigma_0 \le \sigma \le \sigma_1 < 1$
(or $1 < \sigma_2 \le \sigma < 2$), then the constant $C$ depends on $\sigma_0$ and $\sigma_1$ (or $\sigma_2$), not $\sigma$. In particular, $C$ does not blow up as $\sigma$ approaches $2$.

Roughly speaking, the proof of Theorem \ref{thm 1} can be divided into three steps.
First we prove a Liouville type theorem for solutions in $(-\infty,0)\times \bR^d$.  For the classical PDEs, we generally apply interpolation and iteration to obtain $C^{1,\alpha}$ and $C^{2,\alpha}$ estimates. %The nature of nonlocal operator is quite different from the classical operator.
One notable feature of nonlocal operators is that the boundary data is prescribed on the complement of the domain where the equation is satisfied, which makes it difficult to implement these techniques. % interpolation and iteration to deal with the nonlocal operator.
However, if we assume that \eqref{eq 1} is satisfied in $(-\infty,0)\times\bR^d$, then we do not need to handle boundary data any more, which is the advantage of considering an equation satisfied in the whole space.
Second, we prove the a priori estimate for equations with translation invariant kernels by combining the Liouville theorem and a blow-up analysis. Particularly in this step, the extension from symmetric kernels to non-symmetric kernels is non-trivial. A key idea in the classical Evans-Krylov theorem for $F(D^2u)=0$ is that, since the function $F$ is concave, any second directional derivative $D_{ee}^2u$ is a subsolution. It is relatively easy to adapt this idea to the nonlocal equations with symmetric kernels due to the appearance of centered second order difference in the definition of the operator. For nonsymmetric kernels,
some new ideas are required to obtain a similar subsolution as in the symmetric case.  Moreover, the dependence of the $t$ variable also makes the proof more involved.
 Finally, we implement a more or less standard perturbation argument to treat the general case.

The second objective of this paper is to consider the end-point situation when $\alpha=0$.  For second-order elliptic equations, even the Poisson equation $\Delta u=f$, when $f$ is merely bounded, it is well known that $u$ may fail to be $C^{1,1}$. However, this is not the case for nonlocal equations. When $\sigma\neq 1$ and the kernels are independent of $t$ and $x$, we prove a priori $C^\sigma$ estimate in the $x$ variable and  $\Lambda^1$ estimate in the $t$ variable when $f_a$ is merely bounded and measurable, where $\Lambda^1$ is the Zygmund space. When $\sigma=1$, we obtain a priori $\Lambda^1$ estimate in both  $t$ and $x$. %For precise definition, see Section 2.
We assume that the solution is smooth because the spaces in which estimates are obtained are not fine enough
for the nonlocal operators to be defined pointwise.

\begin{theorem}\label{thm 2}
(i) Let $\sigma\neq 1$. Assume that $u$ is %$C^\sigma$ in $x$, $\Lambda^1$ in $t$, and satisfies
a smooth solution to \eqref{eq 1} in $(-\infty,0)\times\bR^{d}$ with $K_a$ independent of $t$ and $x$. When $\sigma\in (1,2)$, we also assume that $Du$ is $C^{(\sigma-1)/\sigma}$ in $t$. Then there exists a constant $C$ depending on $d,\lambda,\Lambda$, and $\sigma$ such that for $\sigma>1$,
\begin{equation*}
[u]^t_{\Lambda^1}+[u]^*_{\sigma}+[Du]^t_{\frac{\sigma-1}{\sigma}}\le C\sup_a\|f_a\|_{L_\infty};
\end{equation*}
and for $\sigma<1$,
 \begin{equation*}
[u]^t_{\Lambda^1}+[u]^*_{\sigma}\le C\sup_a\|f_a\|_{L_\infty}.
\end{equation*}

(ii) Let $\sigma=1$. Assume that $u$ is %$\Lambda^1$ in $(t,x)$ and satisfies
a smooth solution to \eqref{eq 1} in $(-\infty,0)\times\bR^{d}$ with $K_a$ independent of $t$ and $x$. Then there exists a constant $C$ depending on $d,\lambda,\Lambda$, and $\sigma$ such that
 \begin{equation*}
[u]_{\Lambda^1}\le C\sup_a\|f_a\|_{L_\infty}.
\end{equation*}
Here all the norms are taken in $\bR^{d+1}_0:=(-\infty,0)\times \bR^d$.
\end{theorem}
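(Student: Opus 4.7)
My plan is to prove Theorem \ref{thm 2} by adapting the blow-up/Liouville scheme that underlies Theorem \ref{thm 1} to the borderline case $\alpha = 0$. Setting $M := \sup_a \|f_a\|_{L_\infty}$ and normalizing $M = 1$, translation invariance of the equation in $(t,x)$ reduces everything to seminorm bounds at the base point $(0,0)$; for instance, in Part (i) with $\sigma \ne 1$, I want $|u(0, he) + u(0, -he) - 2u(0,0)| \le C h^\sigma$ and $|u(h, 0) + u(-h, 0) - 2u(0,0)| \le C h$ for every unit vector $e$ and all small $h > 0$, together with the corresponding $C^{(\sigma-1)/\sigma}$-in-$t$ modulus of $Du$ when $\sigma > 1$. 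Part (ii) is the single joint Zygmund bound at $\sigma = 1$.

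The heart of the argument is a contradiction/compactness step. If one of the estimates fails, one extracts a sequence of smooth solutions $u_n$ to equations of the form \eqref{eq 1} with kernels $K_a^{(n)} \in \mathcal L_0$ independent of $(t,x)$ and $\sup_a\|f_a^{(n)}\|_{L_\infty} \le 1$, together with scales $h_n \downarrow 0$ at which the relevant second difference normalized by $h_n^\sigma$ (or $h_n$) tends to $\infty$. One then defines the rescaling
\[
v_n(s,y) := \frac{u_n(h_n^\sigma s, h_n y) - \ell_n(s,y)}{\Theta_n},
\]
where $\Theta_n \to \infty$ is the size of the divergent second difference and $\ell_n$ is an affine-in-$y$, linear-in-$s$ correction subtracted to kill the lower-order modes of $u_n$ at scale $h_n$. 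The class $\mathcal L_0$ is scale invariant, so $v_n$ satisfies a rescaled equation of the same form with right-hand side of $L_\infty$ norm $O(h_n^\sigma/\Theta_n) \to 0$, and the cancellation \eqref{eq10.58} is preserved in the $\sigma = 1$ case.

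The interior H\"older estimates for nonlocal parabolic equations with bounded data (available in the non-symmetric setting from Chang-Lara--D\'avila) give uniform $C^\beta_{\mathrm{loc}}$ bounds on $v_n$ after subtracting the normalization, while the growth of $v_n$ at infinity is controlled by $\ell_n$. A diagonal extraction yields a locally uniform limit $v_\infty$ defined on $(-\infty,0]\times\bR^d$ satisfying, in the viscosity sense,
\[
\mathcal M^-_{\mathcal L_0} v_\infty \le (v_\infty)_t \le \mathcal M^+_{\mathcal L_0} v_\infty,
\]
with controlled polynomial growth. The Liouville theorem used in the proof of Theorem \ref{thm 1} then forces $v_\infty$ to be affine in $y$ plus linear in $s$, which is incompatible with the normalization $|v_n(\pm 1, 0) - v_n(0,0)| \asymp 1$ in the limit, a contradiction. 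The bound $[Du]^t_{(\sigma-1)/\sigma}$ for $\sigma > 1$ follows by running the same scheme on first-order spatial increments of $u$, which obey analogous Pucci inequalities by translation invariance, or alternatively by interpolation between $[u]_\sigma^*$ and $[u]^t_{\Lambda^1}$. The case $\sigma = 1$ in Part (ii) uses the symmetric parabolic scaling $(s,y)=(t/h_n, x/h_n)$, and otherwise proceeds identically.

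\textbf{Main obstacle.} The principal difficulty is the choice of the correction $\ell_n$ so that the blow-up limit falls precisely within the class for which the Liouville theorem from the proof of Theorem \ref{thm 1} applies --- polynomial growth of the right order, neither too large (so as to escape Liouville) nor too small (so that the offending second difference actually survives the limit). At the critical $\sigma = 1$, the cancellation condition \eqref{eq10.58} must additionally be preserved under the rescaling; this is automatic given translation invariance of the kernels, but it forces the Liouville step to respect this cancellation. The careful bookkeeping of these lower-order corrections, especially at the critical scaling and in the asymmetric $\sigma < 1$ versus $\sigma > 1$ cases, is the main technical core of the argument.
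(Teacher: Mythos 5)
Your proposal takes a genuinely different route from the paper, and it has a real gap. The paper does \emph{not} prove Theorem \ref{thm 2} by a blow-up/compactness argument. Instead it proceeds by a Campanato-type perturbation scheme: (a) a Campanato characterization of the target seminorms, Lemma \ref{lem5.4}, which bounds $[u]^t_{\Lambda^1}+[u]^*_{\sigma}+[Du]^t_{(\sigma-1)/\sigma}$ by $\sup_{r,(t,x)} r^{-\sigma}\inf_{p\in\cP_1}\|u-p\|_{L_\infty(Q_r(t,x))}$; (b) for each scale $R$, solve the homogeneous translation-invariant problem in $Q_{2R}$ with boundary data equal to a \emph{truncation} $g_K$ of $u-p$, apply the refined Schauder estimate Corollary \ref{cor 1125.1} (whose right-hand side is a weighted $\ell^1$ sum of annular H\"older norms rather than a global norm), and control $u-p-v_K$ by the ABP-type barrier Lemma \ref{lemma 12.271}; (c) iterate at ratio $r/R=1/M$ and absorb. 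The absorption trick replaces compactness; no Liouville theorem is invoked in the proof of Theorem \ref{thm 2}.

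The gap in your blow-up scheme is at the Liouville step, and it is not merely ``bookkeeping of lower-order corrections.'' At the critical exponent $\alpha=0$, the best you can arrange after subtracting the affine correction $\ell_n$ is that the rescaled functions $v_n$ have exactly $C^\sigma$ growth in $x$ and $\Lambda^1$ growth in $t$. In the language of the paper's Liouville theorem (Theorem \ref{thm 10.251}), the limit $v_\infty$ would satisfy condition (i) only with $\alpha=0$, i.e.\ $[v_\infty]_{\beta/\sigma,\beta;Q_R}\le N_0 R^{\sigma-\beta}$, and would a priori be only $C^\beta_{\mathrm{loc}}$ for the small $\beta$ that the interior H\"older estimate provides. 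Theorem \ref{thm 10.251} explicitly requires $\alpha\in(0,\hat\alpha)$ in condition (i) \emph{and} $u\in C^{1+\alpha/\sigma,\sigma+\alpha}_{\mathrm{loc}}$ with $\alpha>0$, both because the decay iteration leaves no room at $\alpha=0$ and because conditions (ii)--(iii) require the Pucci operators to act pointwise on $u$. Neither the growth hypothesis nor the local regularity of $v_\infty$ is available at the borderline. Proving a Liouville theorem for $\alpha=0$ (with $v$ only $C^\sigma$ in $x$, $\Lambda^1$ in $t$, and the operator interpreted weakly) is a substantial task not done in the paper, and the regularity bootstrap you would need to upgrade $v_\infty$ to $C^{1+\alpha/\sigma,\sigma+\alpha}_{\mathrm{loc}}$ cannot rely on Theorem \ref{thm 1104.1}, whose right-hand side involves a \emph{global} $C^{1+\alpha'/\sigma,\alpha'+\sigma}$ norm that is infinite for a function with $|x|^\sigma$ growth. (The annular refinement, Corollary \ref{cor 1125.1}, is exactly what the paper introduces to cope with this --- but using it is precisely the Campanato scheme, not a blow-up.) In short: the critical $\alpha=0$ case is exactly where the blow-up/Liouville machine runs out of room, and the paper switches methods for that reason.
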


In the theorem above, $[\,\cdot\,]^\ast$, $[\,\cdot\,]^t_{\alpha}$, $[\,\cdot\,]_{\Lambda^1}^t$, and $[\,\cdot\,]_{\Lambda^1}$ are the H\"older semi-norms in $x$, $t$, the Zygmund semi-norm in $t$, and the Zygmund semi-norm with respect to $(t,x)$, respectively. See the precise definitions in Section 2.

We localize Theorem \ref{thm 2} to obtain the following corollary.
\begin{corollary}\label{cor 1.3}
(i) Let $\sigma\neq 1$. Assume that $u$ is a smooth solution to \eqref{eq 1} with $K_a$ independent of $t$ and $x$.
%When $\sigma\in (1,2)$, we also assume that $Du$ is $C^{(\sigma-1)/\sigma}(Q_1)$ in $t$.
Then for $\sigma>1$,
\begin{equation*}
[u]^\ast_{\sigma;Q_{1/2}}+[u]^t_{\Lambda^1(Q_{1/2})}+[D u]^t_{\frac{\sigma-1}{\sigma};Q_{1/2}}\le C\Big(\sup_{a\in \mathcal{A}}\|f_a\|_{L_\infty(Q_1)}+\|u\|_{L_\infty((-1,0)\times \bR^d)}\Big);
\end{equation*}
and for $\sigma<1$,
\begin{equation*}
[u]^\ast_{\sigma;Q_{1/2}}+[u]^t_{\Lambda^1(Q_{1/2})}
\le C\Big(\sup_{a\in \mathcal{A}}\|f_a\|_{L_\infty(Q_1)}+\|u\|_{L_\infty((-1,0)\times \bR^d)}\Big).
\end{equation*}

(ii) Let $\sigma=1$. Assume that $u$ is a smooth solution to \eqref{eq 1110.1} with $K_a$ independent of $t$ and $x$. Then we have
\begin{equation*}
[u]_{\Lambda^1;Q_{1/2}}
\le C\Big(\sup_{a\in \mathcal{A}}\|f_a\|_{L_\infty(Q_1)}+\|u\|_{L_\infty((-1,0)\times \bR^d)}\Big).
\end{equation*}
\end{corollary}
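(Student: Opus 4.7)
The plan is to localize Theorem \ref{thm 2} via a cutoff construction combined with an iteration over concentric cylinders. Fix radii $1/2 \le r < r' \le 3/4$ and a smooth cutoff $\eta \in C^\infty_c((-\infty, 0) \times \bR^d)$ with $\eta \equiv 1$ on $Q_r$, $\operatorname{supp}\eta \subset Q_{r'}$, and $|D^k \eta| \le C(r' - r)^{-k}$. Set $\tilde u := \eta u$, extended by zero, which is smooth, bounded by $\|u\|_{L_\infty((-1, 0) \times \bR^d)}$, and coincides with $u$ on $Q_r$. The goal is to exhibit bounded functions $\tilde f_a$ on $(-\infty, 0) \times \bR^d$ such that the global equation $\tilde u_t = \inf_{a \in \cA}(L_a \tilde u + \tilde f_a)$ holds, so that Theorem \ref{thm 2} applies to $\tilde u$ and restricting the resulting estimates to $Q_r$ gives those for $u$.

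On $Q_r$, I would set $\tilde f_a := f_a + L_a(u - \tilde u)$; then $\inf_a(L_a \tilde u + \tilde f_a) = \inf_a(L_a u + f_a) = u_t = \tilde u_t$, so the equation holds. Since $u - \tilde u = u(1 - \eta)$ vanishes identically (together with its derivatives) on $Q_r$, the nonlocal operator $L_a(u - \tilde u)$ evaluated at a point of $Q_r$ reduces---using the appropriate form of $L_a$ for each range of $\sigma$ and the cancellation condition \eqref{eq10.58} when $\sigma = 1$---to an integral over $\{|y| \gtrsim r' - r\}$, where $K_a$ is integrable; this gives $\sup_a \|L_a(u - \tilde u)\|_{L_\infty(Q_r)} \le C(r' - r)^{-\sigma} \|u\|_{L_\infty((-1, 0) \times \bR^d)}$. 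On the complement of $Q_r$, I would define $\tilde f_a := \tilde u_t - L_a \tilde u$, so each term $L_a \tilde u + \tilde f_a$ equals $\tilde u_t$ and the equation is trivially satisfied. In the transition region $Q_{r'} \setminus Q_r$, expanding $\tilde u_t = u_t \eta + u \eta_t$ and estimating the nonlocal product $L_a(\eta u)$ in terms of $u$, $\eta$, and their derivatives yields a bound by negative powers of $r' - r$ times lower-order semi-norms of $u$ on $Q_{r'}$; outside $Q_{r'}$, $\tilde u$ vanishes and $|L_a \tilde u|$ is bounded by $C\|u\|_{L_\infty}$ from the far-field integral alone.

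Applying Theorem \ref{thm 2} to $\tilde u$ then yields the desired semi-norms of $u$ on $Q_r$ in terms of $\sup_a \|\tilde f_a\|_{L_\infty}$, which contains lower-order semi-norms of $u$ on $Q_{r'}$ in addition to $\|f\|_{L_\infty(Q_1)}$ and $\|u\|_{L_\infty}$. By interpolation, these lower-order semi-norms can be absorbed into the controlling semi-norm on a slightly larger $Q_{r''}$ with a small prefactor, and a geometric iteration over a sequence $r_n \nearrow 3/4$ combined with a standard absorption lemma produces the desired bound on the combined semi-norm at radius $1/2$. The main obstacle is the careful bookkeeping of the nonlocal product $L_a(\eta u)$ in the transition region: since $L_a$ obeys no Leibniz rule, this term must be split into near-field and far-field contributions, and the interpolation must be arranged so that the near-field piece---involving derivatives of $u$ and $\eta$---reabsorbs into the iteration while the far-field contributes only $\|u\|_{L_\infty}$. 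The cases $\sigma \in (0, 1)$, $\sigma = 1$, and $\sigma \in (1, 2)$ proceed in parallel with only minor modifications reflecting the different forms of $L_a$.
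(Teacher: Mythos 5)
Your high-level plan — cut off $u$, produce a global equation for $\tilde u = \eta u$, apply Theorem~\ref{thm 2}, interpolate the lower-order terms, and iterate over nested cylinders — is exactly the paper's strategy. However, there is a genuine gap in how you construct and estimate the new data $\tilde f_a$ off $Q_r$, and it is precisely the place where the real work of the localization happens.

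You set $\tilde f_a := \tilde u_t - L_a \tilde u$ on the transition region $Q_{r'}\setminus Q_r$ and assert that this quantity is controlled by ``negative powers of $r'-r$ times lower-order semi-norms of $u$.'' It is not. Expanding $\tilde u_t = \eta u_t + \eta_t u$, the term $\eta u_t$ — equivalently, by the equation, $\eta\inf_b(L_b u + f_b)$ — is of the same order as what the corollary is trying to prove; its pointwise bound requires controlling $\cM^\pm u$, hence $[u]^*_\sigma$ (or $\|D^2 u\|_\infty$), and not merely $\|Du\|_\infty$ or $\|u\|_\infty$. The same is true of the near-field part of $L_a\tilde u$. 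Both terms enter $\tilde f_a$ with a prefactor of size $O(1)$, so there is no small constant to absorb them, and the ``standard absorption lemma'' you invoke will not close. What you are missing is the cancellation that the paper exploits: since $\eta\ge0$, one has $\eta u_t = \inf_a\big(\eta L_a u + \eta f_a\big)$, which keeps the infimum \emph{outside} and lets you pair $\eta L_a u$ with $L_a(\eta u)$ to form the commutator
\begin{equation*}
h_a = \eta L_a u - L_a(\eta u)
= \int_{\bR^d}\Big[\big(\eta(t,x)-\eta(t,x+y)\big)u(t,x+y) + y^T D\eta(t,x)\,u(t,x)\Big]K_a(y)\,dy.
\end{equation*}
By the fundamental theorem of calculus (see the $\xi_k$ computation in the paper's Step~2), the integrand is $O(|y|^2)$ as $y\to 0$ with coefficients involving only $u$ and $Du$, so that $\|h_a\|_{L_\infty}$ is bounded by $\|u\|_{L_\infty}$ and $\|Du\|_{L_\infty}$ with no second derivatives and no $u_t$. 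This $\|Du\|_{L_\infty}$ can then be interpolated against $[u]^*_\sigma$ with an arbitrarily small coefficient, which is what makes the geometric iteration over $Q^k := Q_{1-2^{-k}}$ (with a weighted summation trick) converge. Writing $\tilde f_a := \tilde u_t - L_a\tilde u$ sidesteps this commutator structure and therefore leaves the highest-order terms uncancelled.

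A second, more minor point: your estimate on $Q_r$ that $L_a(u-\tilde u)$ is a far-field integral over $\{|y|\gtrsim r'-r\}$ requires $\eta\equiv1$ on a strictly larger cylinder than $Q_r$; if $\eta=1$ only on $\overline{Q_r}$, then for $x$ near $\partial B_r$ the integral picks up contributions from arbitrarily small $|y|$. The paper handles this implicitly by estimating $h_a$ uniformly over all of $\bR^{d+1}_0$ rather than splitting $Q_r$ from $Q_r^c$, and keeping the Leibniz-type Taylor remainder in the integrand so that small $|y|$ always contribute $O(|y|^2)$; you should do the same.
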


To our best knowledge, such result is new even for nonlocal elliptic equations with symmetric kernels. A similar result was obtained very recently by Mou \cite{Mo16} for elliptic equations with symmetric, smooth kernels and Dini continuous data. With merely bounded and measurable data, the best estimates known in the literature are the $C^\beta$ regularity of $u$ in $t$ for $\beta<1$ and the $C^\gamma$ regularity of $u$ in $x$ for $\gamma<\min\{\sigma,1+\alpha\}$, where $\alpha>0$ is a small constant. See \cite{Se15a,ChKr15}. Because $\Lambda^1\subsetneq C^\beta$ for any $\beta<1$, Theorem \ref{thm 2} improves these results and is optimal even in the linear case. See Remark \ref{rem1.02} below.
%A further question is whether $u$ is also Lispchitz in $t$. We conjecture that this is not always true without additional assumptions on the data.

The proof of Theorem \ref{thm 2} is based on a perturbation type argument using Campanato's approach. We first refine the estimate in Theorem \ref{thm 1} when the operator is translation invariant. In particular, we replace  $\|u\|_{\alpha/\sigma,\alpha;(-1,0)\times \bR^d}$ on the right-hand side of \eqref{eq 12.142} by
\begin{equation}
                            \label{eq10.46}
[u]_{\alpha/\sigma,\alpha;(-1,0)\times B_2}+\sum_{j=2}^\infty2^{-j\sigma}[u]_{\alpha/\sigma,\alpha;(-1,0)\times (B_{2^j}\setminus B_{2^{j-1}})}.
\end{equation}
The advantage of the replacement  will be explained below. Another important ingredient in the proof is  the fact that, for example, when $\sigma>1$,
\begin{equation}
[u]^t_{\Lambda^1(\bR^{d+1}_0)}+[u]^\ast_{\sigma;\bR^{d+1}_0}
+[Du]^t_{\frac{\sigma-1}{\sigma};\bR^{d+1}_0}
\le C\sup_{r>0}\sup_{(t,x)\in \bR^{d+1}_0}E[u;Q_r(t,x)], \label{eq 12.151}
\end{equation}
%and for $\sigma<1$,
%\begin{equation}
%                                    \label{eq4.1920}
%[u]^t_{\Lambda^1(\bR^{d+1}_0)}+[u]^\ast_{\sigma;\bR^{d+1}_0}\le C\sup_{r>0}\sup_{(t,x)\in \bR^{d+1}_0}E[u;Q_r(t,x)],
%\end{equation}
where
\begin{equation*}
E[u;Q_r(t,x)]:=\inf_{p\in \mathcal{P}_1}r^{-\sigma}\|u-p\|_{L_\infty(Q_r(t,x))},
\end{equation*}
$\mathcal{P}_1$ is the set of linear functions in $(t,x)$, and  $Q_r(t,x)$ is the parabolic cylinder with center $(t,x)$; see \eqref{eq 12.281}. Therefore, instead of directly estimating $$
[u]^t_{\Lambda^1}+[u]^\ast_\sigma+[Du]^t_{\frac{\sigma-1}{\sigma}},
%\quad (\text{or}\,\,[u]^t_{\Lambda^1}+[u]^\ast_\sigma ),
$$
we estimate $E[u;Q_r(t,x)]$ for any fixed $r$ and $(t,x)$.  It is worth noting that in view of the proofs of Lemmas \ref{lem5.2}, \ref{lem5.5b}, and \ref{lem5.4}, the two quantities on the left and right hand sides of \eqref{eq 12.151} are actually equivalent.

%, and then take the supremum.
More specifically, without loss of generality, we set $(t,x)=(0,0)$ and let $v_K$ solve the homogeneous equation
\begin{align*}
\begin{cases}
\partial_t v_K=\inf_{a\in\mathcal{A}}L_av_K\quad &\text{in}\,\, Q_{2R}\\
v_K=g_K:=\max\{-K,\min\{u-p,K\}\}\quad &\text{in}\,\, (-(2R)^\sigma,0)\times B_{2R}^c
\end{cases},
\end{align*}
where $K$ is a large constant, $p$ is a carefully chosen linear function, and $R>2r$ is a constant to be determined.  Now we apply Theorem \ref{thm 1} to $v_K$ and control $[v_K]_{1+\alpha/\sigma,\alpha+\sigma;Q_{R/2}}$ by using scaling argument and replacing $\|v_K\|_{\alpha/\sigma,\alpha}$ by \eqref{eq10.46}. It is easily seen that in each cylindrical domain $(-R^\sigma,0)\times(B_{2^jR}\setminus B_{2^{j-1}R})$, the H\"older norm of $g_K$ is bounded and independent of $K$, but globally it depends on $K$ and goes to infinity as $K\to \infty$. This is also the advantage of decomposing the domain into annuli.  %Next, we control ${\color{blue}g_K}$ by $$[u]^\ast_\sigma+[u]^t_{\Lambda^1}+[Du]^t_{\frac{\sigma-1}{\sigma}}$$
%for some particular choice of $p$.
We then set $q_K$ to be the first-order Taylor expansion of $v_K$ and estimate
\begin{align*}
\|u-p-q_K\|_{L_\infty(Q_r)}\le \|u-p-v_K\|_{L_\infty(Q_r)}+\|v_K-q_K\|_{L_\infty(Q_r)},
\end{align*}
where the first term is bounded by $CR^\sigma$ due to the Aleksandrov-Bakelman-Pucci estimate and second term is controlled  by $[v_K]_{1+\alpha/\sigma,\alpha+\sigma;Q_r}$. Finally, we are able to obtain %a scheme as follows
\begin{equation*}
r^{-\sigma}\|u-p-q_K\|_{L_\infty(Q_r)}\le C(r/R)^\alpha\big([u]^\ast_\sigma+[u]^t_{\Lambda^1}
+[Du]^t_{\frac{\sigma-1}{\sigma}}\big)+C(R/r)^\sigma\|f\|_{L_\infty}.
\end{equation*}
By setting $R=Mr$, using \eqref{eq 12.151}, and taking $M$ sufficiently large, the terms involving $u$ on the right-hand side above are absorbed in the left-hand side.

%It seems that new ideas are needed to deal with the case when $\sigma=1$, because in this case we expect that $u\in \Lambda^1$ in $x$ and it is unclear to us how to choose $p$ in order to get a good estimate of $u-p$ in $(-R^\sigma,0)\times(B_{2^jR}\setminus B_{2^{j-1}R})$.

\begin{remark}
                    \label{rem1.02}
We give two simple examples which indicate that the estimates in Theorem \ref{thm 2} (and thus in corollary \ref{cor 1.3}) are optimal even in the linear case. Set $d=1$. Let $f=f(t,x)=\chi_{\{t<-M|x|^\sigma\}}$ for some constant $M>0$ and $u$ be a solution to the equation
$$
u_t+(-\Delta)^{\sigma/2}u=f\quad\text{in}\,\,(-\infty,0)\times \bR^{d}.
$$
Then by the explicit representation of solutions, it is easily seen that for sufficiently large $M$,
$$
\lim_{t\nearrow 0} u_t(t,0)=-\infty.
$$
Thus $u$ cannot be Lipschitz in $t$ in this case.
Next we set $\sigma=1$, $g=g(t,x)=\chi_{\{t<0,x>t\}}$, and $v$ be a solution to
$$
v_t+(-\Delta)^{1/2}u=g\quad\text{in}\,\,(-\infty,0)\times \bR^{d}.
$$
Again by the explicit representation of solutions,
$$
\lim_{x\to 0} v_x(0,x)=\infty.
$$
Therefore, $v$ cannot be Lipschitz in $x$ in this case.
\end{remark}

By viewing solutions to elliptic equations as steady state solutions to parabolic equations, from Theorems \ref{thm 1}, \ref{thm 2}, and Corollary \ref{cor 1.3}, we obtain the corresponding results for nonlocal elliptic equations with nonsymmetric and rough kernels.

The organization of this paper is as follows. In the next section, we introduce some notation and preliminary results that are necessary in the proof of our main results. We prove the Liouville theorem in Section 3 and Theorem \ref{thm 1} in Section 4. In Section 5, we apply Theorem \ref{thm 1} to prove Theorem \ref{thm 2}.

{\bf Remark added after the proof.} After we finished the paper, we learnt that Chang-Lara and Kriventsov \cite{ChKr15} also established a Schauder estimates for fully nonlinear nonlocal parabolic equations with rough kernels, by using a different method. In their paper, it is assumed that the kernels are time-independent and symmetric. They also obtained a $C^\beta$ estimate of $u$ in $t$ for $\beta<1$ when the data is bounded and measurable.

\section{Notation and preliminary results}
In this section, we introduce some notation which will be used throughout this paper and some preliminary results which are useful in our proof.
 We use $B_r(x)$ to denote the Euclidean ball in $\bR^d$ with center $x$ and radius $r$.
The parabolic cylinder $Q_r(t,x)$ is defined as follows
\begin{equation}
Q_r(t,x)=(t-r^\sigma,t)\times B_r(x).\label{eq 12.281}
\end{equation}
We simply use $Q_r$ to denote $Q_r(0,0)$ and $\bR^{d+1}_0:=(-\infty,0)\times\bR^d$.
Let $\Omega\subset \bR^{d+1}$ and we define the H\"older semi-norm as follows: for any $\alpha,\beta\in (0,1]$, and function $f$,
\begin{equation*}
[f]_{\beta,\alpha;\Omega}
=\sup\Big\{\frac{|f(t,x)-f(s,y)|}{\max(|x-y|^\alpha,|t-s|^{\beta})}: (t,x), (s,y)\in \Omega, (t,x)\neq (s,y)\Big\}.
\end{equation*}
We denote
\begin{equation*}
\|f\|_{\beta,\alpha;\Omega}=\|f\|_{L_\infty(\Omega)}+[f]_{\beta,\alpha;\Omega}.
\end{equation*}
For any nonnegative integers $m$ and $n$,
\begin{equation*}
\|f\|_{n+\beta,m+\alpha;\Omega}=\|f\|_{L_\infty(\Omega)}+[D^m f]_{\beta,\alpha;\Omega}+[\partial_t^n f]_{\beta,\alpha;\Omega}.
\end{equation*}
The spaces corresponding to $\|\cdot\|_{\alpha,\beta;\Omega}$ and $\|\cdot\|_{m+\alpha,n+\beta;\Omega}$ are denoted by $C^{\alpha,\beta}(\Omega)$ and $C^{m+\alpha,n+\beta}(\Omega)$, respectively.
Next, for any $\alpha,\beta\in (0,1]$, we define the H\"older semi-norms only with respect to $x$ or $t$
\begin{align*}
[f]^\ast_{\alpha;\Omega}
=\sup\Big\{\frac{|f(t,x)-f(t,y)|}{|x-y|^\alpha}: (t,x), (t,y)\in \Omega, x\neq y\Big\},\\
[f]^t_{\beta;\Omega}=\sup\Big\{\frac{|f(t,x)-f(s,x)|}{|t-s|^\beta}: (t,x), (s,x)\in \Omega, t\neq s\Big\}.
\end{align*}
When $\sigma=k+\alpha$ with some integer $k\ge 1$,
$$[f]^\ast_{\sigma;\Omega}=[D^k f]^\ast_{\alpha;\Omega}.$$
%When we do not specify the domain in the norm, we mean $\Omega=\bR^{d+1}$.
For $\alpha\in (0,2)$, we define the Lipschitz-Zygmund semi-norm and norm by
\begin{align*}
[u]_{\Lambda^\alpha}&=\sup_{|h|>0}|h|^{-\alpha}\|u(\cdot+h)
+u(\cdot-h)-2u(\cdot)\|_{L_\infty},\\
\|u\|_{\Lambda^\alpha}&=\|u\|_{L_\infty}+[u]_{\Lambda^\alpha}.
\end{align*}
We say $u\in \Lambda^\alpha$ if $\|u\|_{\Lambda^\alpha}<\infty$.

For simplicity of notation, we denote
\begin{align*}
\delta u(t,x,y)=
\begin{cases}
u(t,x+y)-u(t,x)-y^TDu(t,x)\quad&\text{for}\,\, \sigma\in(1,2),\\
u(t,x+y)-u(t,x)-y^TDu(t,x)\chi_{B_1}\quad&\text{for}\,\, \sigma=1,\\
u(t,x+y)-u(t,x)\quad &\text{for}\,\, \sigma\in (0,1).
\end{cases}
\end{align*}
The Pucci extremal operator is defined as follows: for $\sigma\neq 1$
\begin{align*}
\cM^+u(t,x)=\int_{\bR^d}\big(\Lambda\delta u(t,x,y)^+-\lambda\delta u(t,x,y)^-\big)\frac{2-\sigma}{|y|^{d+\sigma}}\,dy,\\
\cM^-u(t,x)=\int_{\bR^d}\big(\lambda\delta u(t,x,y)^+-\Lambda\delta u(t,x,y)^-\big)\frac{2-\sigma}{|y|^{d+\sigma}}\,dy.
\end{align*}
When $\sigma= 1$, the extremal operator cannot be written out explicitly, due to the condition \eqref{eq10.58}.
Nevertheless, we do not use exact representation directly and define the extremal operator by
\begin{equation*}
\cM^+u=\sup_{a}L_au\quad\quad \text{and}\quad \cM^-u=\inf_aL_a u,
\end{equation*}
where the infimum (or supremum) is taken with respect to all $L_a$'s with kernels $K_{a}$ satisfying \eqref{eq10.58}.

We recall the weak Harnack inequality of \cite[Theorem 6.1]{SS14}.
\begin{proposition}\label{thm 8.311}
Assume that $0<\sigma_0\le \sigma<2$ and $C>0$ is a constant. Let $u$ be a function such that
$$
u_t-\cM^-u\ge -C\quad \text{in}\quad Q_1,\quad
u\ge 0\quad \text{in}\quad (-1,0)\times \bR^d.
$$
Then there are constants $C_1>0$ and $\epsilon_1\in (0,1)$  depending only on $\sigma_0, \lambda, \Lambda$, and $d$, such that
$$
\Big(\int_{(-1,-2^{-\sigma})\times B_{1/4}}u^{\epsilon_1}\,dx\,dt\Big)^{1/\epsilon_1}\le C_1\Big(\inf_{Q_{1/4}}u+C\Big).
$$
\end{proposition}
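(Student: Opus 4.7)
The plan is to follow the Krylov--Safonov strategy adapted to the nonlocal parabolic setting with non-symmetric kernels. There are two main ingredients: (i) a nonlocal ABP-type point estimate that converts a pointwise infimum bound into a positive Lebesgue density for a sublevel set of $u$, and (ii) a Calder\'on--Zygmund stacking-of-cubes argument, adapted to the parabolic scaling $t\sim|x|^\sigma$, which iterates this density estimate into an exponential measure decay of the form $|\{u>M^k\}\cap Q_{1/4}|\le C(1-\mu)^k$. A layer-cake integration with $\epsilon_1$ chosen so that $(1-\mu)M^{\epsilon_1}<1$ then yields the desired $L^{\epsilon_1}$ estimate.

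For the point estimate I would proceed as follows: assuming $u\ge 0$ on $(-1,0)\times\bR^d$, $u_t-\cM^-u\ge -C$ in $Q_1$, and $\inf_{Q_{1/4}}u\le 1$, I would construct a smooth barrier $\varphi$ compactly supported in a parabolic cylinder $Q\subset Q_1$ which serves as a supersolution of $\partial_t-\cM^+$ outside $Q$ and whose values force $u+\varphi$ to attain a strict negative minimum inside $Q$. Applying a parabolic ABP-type estimate to $u+\varphi$ then forces the contact set to have positive Lebesgue measure, yielding the density bound $|\{u\le M\}\cap Q|\ge \mu|Q|$ with $\mu\in(0,1)$ and $M>1$ depending only on $d,\sigma_0,\lambda,\Lambda$. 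A chaining argument along a dyadic sequence of parabolic cubes, rescaling at each step, then upgrades this single-scale density to the exponential measure decay above.

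The main difficulty is threefold. First, uniformity of the constants as $\sigma\nearrow 2$: since the ellipticity bounds carry the factor $(2-\sigma)$, the barrier must be designed so that $\mu$, $M$, and $\epsilon_1$ do not degenerate in this limit; concretely, its $C^2$-norm and its tail contribution have to be balanced against the scaling of $\cM^\pm$ uniformly in $\sigma\ge\sigma_0$. Second, when $\sigma=1$ the cancellation condition \eqref{eq10.58} is essential both for the pointwise definition of $L_a\varphi$ and for the identification of $\cM^\pm$ as suprema/infima over admissible kernels, so the barrier and its first-order truncation must be compatible with this condition. Third, the asymmetry of the kernels precludes the use of centered second-order differences, so one-sided estimates on $\delta u$ must be used in the ABP step, and the tail contribution $\int_{|y|\ge 1}\delta u(\cdot,y)\,K_a\,dy$ must be controlled using crucially the global nonnegativity of $u$ on the whole slab $(-1,0)\times\bR^d$, rather than only in $Q_1$. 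Once these three technical points are resolved, the rest of the argument is a direct parabolic analog of the classical Krylov--Safonov machinery.
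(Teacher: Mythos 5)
The paper does not prove Proposition \ref{thm 8.311}: it is imported verbatim as \cite[Theorem 6.1]{SS14}, so there is no internal argument to compare your sketch against. Your outline is a reasonable roadmap of the Krylov--Safonov strategy that Schwab and Silvestre use to obtain this result: a point estimate obtained from a bump-function barrier together with a nonlocal ABP-type measure bound, a parabolic covering lemma to iterate the single-scale density into an exponential measure decay, and a layer-cake integration for the $L^{\epsilon_1}$ bound. The three difficulties you flag — uniformity of constants as $\sigma\to 2$, compatibility of the barrier with the $\sigma=1$ cancellation condition \eqref{eq10.58}, and the treatment of non-symmetric kernels via one-sided differences combined with global nonnegativity — are all real, but your proposal does not resolve any of them; they constitute the technical content of the cited theorem and of the nonlocal ABP machinery it rests on. As written, your text is a plan, not a proof, and in the paper's own logic no proof is required here.

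One structural point your sketch glosses over, and which would need explicit care if you did carry this out: the statement has a built-in time shift. The $L^{\epsilon_1}$ integral is over the past slab $(-1,-2^{-\sigma})\times B_{1/4}$, while the infimum is over the future cylinder $Q_{1/4}=(-4^{-\sigma},0)\times B_{1/4}$, and these regions are disjoint in time. Information must propagate forward in $t$, so your ``chaining along a dyadic sequence of parabolic cubes'' cannot be a static Calder\'on--Zygmund decomposition; it must be the parabolic stacking (growing ink spots) argument in which each new generation of cubes is translated forward in time by a fixed multiple of its own time-height. Without that forward push, a density estimate on a single cube does not connect a past $L^{\epsilon_1}$ norm to a future infimum. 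This is exactly where the parabolic version of the covering lemma diverges from the elliptic one, and it should be made explicit rather than subsumed under ``rescaling at each step.''
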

From Proposition \ref{thm 8.311}, we obtain the following corollary for $\sigma\in(1,2)$, the proof of which is provided in the appendix.

\begin{corollary}\label{weak_harnack}
Let $\sigma_2\in (1,2)$, $\sigma\in (\sigma_2,2)$, $C>0$ be constants, and $u$ satisfy
\begin{align*}
u_t-\cM^-u\ge -C\quad \text{in}\quad Q_{2r},\quad u\ge 0\quad\text{in}\quad  (-(2r)^\sigma,0)\times \bR^d.
\end{align*}
Let $\varepsilon_1$ be the constant in Proposition \ref{thm 8.311}.
For any $r,\delta\in (0,1)$,  denote $\tilde{Q}_{\delta r}=(-r^\sigma,-(\delta r)^\sigma)\times B_r$. Then we have
\begin{equation*}
r^{-(d+\sigma)/\epsilon_1}\Big(\int_{\tilde{Q}_{\delta r}}u^{\epsilon_1}\,dx\,dt\Big)^{1/\epsilon_1}
\le C_2\Big(\inf_{Q_{\delta r/2}}u+Cr^\sigma\Big),
\end{equation*}
where $C_2>0$ is a constant depending only on $\delta$, $\sigma_2$, $\lambda$, $\Lambda$, and $d$.
\end{corollary}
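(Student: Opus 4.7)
The plan is to reduce to $r=1$ via the parabolic scaling $(t,x)\mapsto (r^\sigma t, rx)$, which preserves the structure of the inequality $u_t-\cM^- u\ge -C$ (with $C$ replaced by $Cr^\sigma$), and then to combine a finite cover of $\tilde Q_\delta$ with an iterated Harnack chain argument based on Proposition~\ref{thm 8.311}.

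First, a direct rescaling: for any $Q_\rho(t_0,x_0)\subset Q_2$, applying Proposition~\ref{thm 8.311} to $v(s,y):=u(t_0+\rho^\sigma s,\, x_0+\rho y)$ and changing variables back gives
\begin{equation*}
\rho^{-(d+\sigma)/\epsilon_1}\Big(\int_{\cR_\rho(t_0,x_0)} u^{\epsilon_1}\,dx\,dt\Big)^{1/\epsilon_1}\le C_1\Big(\inf_{Q_{\rho/4}(t_0,x_0)} u+C\rho^\sigma\Big),
\end{equation*}
where $\cR_\rho(t_0,x_0):=(t_0-\rho^\sigma,\,t_0-(\rho/2)^\sigma)\times B_{\rho/4}(x_0)$. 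The constant $C_1$ stays bounded as $\sigma\to 2$ because $\sigma_2>0$ is fixed.

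Next, I would fix $\rho_0=c_0\delta$ with $c_0$ a small dimensional constant ensuring every cylinder $Q_{\rho_0}(\tau,\xi)$ used below lies inside $Q_2$, cover $\tilde Q_\delta$ by a finite family $\{\cR_{\rho_0}(\tau_j,\xi_j)\}_{j=1}^N$ with $N=N(\delta,\sigma_2,d)$, and apply the displayed estimate termwise to obtain
\begin{equation*}
\Big(\int_{\tilde Q_\delta} u^{\epsilon_1}\,dx\,dt\Big)^{1/\epsilon_1}\le C(\delta)\max_{1\le j\le N}\Big(\inf_{Q_{\rho_0/4}(\tau_j,\xi_j)} u+C\Big).
\end{equation*}

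Finally, for each $j$ I would construct a Harnack chain of centers $(\tau_j^{(k)},\xi_j^{(k)})$, $k=0,\dots,K_j$, with $(\tau_j^{(0)},\xi_j^{(0)})=(\tau_j,\xi_j)$, $K_j\le K(\delta,\sigma_2,d)$, and $Q_{\rho_0/4}(\tau_j^{(K_j)},\xi_j^{(K_j)})\subset Q_{\delta/2}$, such that $Q_{\rho_0/4}(\tau_j^{(k)},\xi_j^{(k)})\subset \cR_{c_1\rho_0}(\tau_j^{(k+1)},\xi_j^{(k+1)})$ for some universal $c_1\sim 1$. Combining the elementary bound $\inf_A u\le (|A|^{-1}\int_A u^{\epsilon_1})^{1/\epsilon_1}$ with the rescaled weak Harnack on each link yields $\inf_{Q_{\rho_0/4}(\tau_j^{(k)},\xi_j^{(k)})}u\le C(\inf_{Q_{\rho_0/4}(\tau_j^{(k+1)},\xi_j^{(k+1)})}u+C)$, and iterating gives $\inf_{Q_{\rho_0/4}(\tau_j,\xi_j)}u\le C(\delta)(\inf_{Q_{\delta/2}}u+C)$, which combined with the previous display finishes the proof. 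The main technical point is the uniformity of the chain length $K_j$ in $\sigma$: each link advances time by $\sim\rho_0^\sigma\ge\rho_0^2$ and space by $\sim\rho_0$, so the total number of steps is bounded by a constant depending only on $\delta,\sigma_2,d$; accordingly $C_2=C_2(\delta,\sigma_2,\lambda,\Lambda,d)$, as claimed.
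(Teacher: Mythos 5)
Your proposal is correct and follows the same essential approach as the paper: reduce to $r=1$ by parabolic scaling, then iterate Proposition \ref{thm 8.311} along a Harnack chain whose length is bounded uniformly in $\sigma\in(\sigma_2,2)$ because the time step $\rho_0^\sigma$ is bounded below by $\rho_0^2$ when $\rho_0<1$. The only organizational difference is that the paper anchors the chain at a single point $(t_0,x_0)\in\overline{Q_{\delta/2}}$ where the infimum is attained and marches backward in time before a final covering step, whereas you cover $\tilde Q_\delta$ first and run a forward-in-time chain from each piece to $Q_{\delta/2}$; the two organizations are interchangeable.
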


We state the following local boundedness estimate from \cite[Corollary 6.2]{ChangD14}.

\begin{proposition}\label{thm 8.261}
 Let $\Omega\subset \bR^d$, $t_1<t_2$,  and $u$ satisfy
\begin{equation*}
u_t-\cM^+u\le 0\quad \text{in}\quad (t_1,t_2]\times \Omega.
\end{equation*}
Then for any $(t_1^\prime,t_2]\times \Omega^\prime\subset\subset (t_1,t_2]\times\Omega$,
\begin{equation*}
\sup_{\Omega^\prime\times(t_1^\prime,t_2]}u^+\le C(2-\sigma)\int_{t_1}^{t_2}\int_{\bR^d}\frac{u^+}{1+|x|^{d+\sigma}}\,dx\,dt,
\end{equation*}
where $C$ depends on $\Omega$, $\Omega'$, $t_1$, $t_2$, and $t_1'$.
\end{proposition}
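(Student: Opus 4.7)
The goal is a local sup bound for subsolutions of the extremal inequality $u_t - \cM^+ u \le 0$, with the right-hand side being a weighted $L_1$ tail of $u^+$. My plan is a De Giorgi / Moser iteration in the spirit of Caffarelli--Chan--Vasseur, adapted to the nonlocal parabolic extremal operator, with a clean separation of interior and tail contributions via the structure of the class $\mathcal{L}_0$.

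By translation, rescaling, and a covering argument, it suffices to treat the model case: prove that if $u$ satisfies the inequality in $Q_1$, then
\begin{equation*}
\sup_{Q_{1/2}} u^+ \le C \int_{-1}^0 \int_{\bR^d} \frac{u^+(t,x)}{1+|x|^{d+\sigma}}\, dx\, dt.
\end{equation*}
The first step is to split $u^+ = v + w$ with $v = u^+ \chi_{B_2}$ and $w$ supported outside $B_2$, and observe that $v$ satisfies an inequality of the form $v_t - \cM^+ v \le g(t)$ in $Q_{3/2}$, where the nonhomogeneous term is bounded pointwise by $C(2-\sigma) \int_{B_2^c} u^+(t,y)\, |y|^{-d-\sigma}\, dy$. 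This reduction uses only the upper bound $K_a \le \Lambda(2-\sigma)/|y|^{d+\sigma}$ in $\mathcal{L}_0$ together with the definition of $\delta u$ (with the three cases $\sigma<1$, $\sigma=1$, $\sigma>1$ treated separately; the case $\sigma=1$ uses the cancellation hypothesis \eqref{eq10.58}).

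The second step is a De Giorgi level-set scheme for $v$. For nested cylinders $Q_{r_j}$ with $r_j \downarrow 1/2$ and shifted levels $k_j = M(1-2^{-j})$, I would derive a Caccioppoli-type energy inequality for $(v-k_j)^+$ using a space-time cutoff $\eta_j$, which controls $\sup_t \|\eta_j(v-k_j)^+\|_{L_2}^2$ plus a nonlocal Gagliardo-type energy, in terms of the source $g$ and a correction involving the measure of the superlevel set. Combining this with the nonlocal Sobolev embedding (or, equivalently, by applying the weak Harnack of Proposition \ref{thm 8.311} to suitable barrier-corrected auxiliary functions), the iteration yields geometric decay of $\|(v-k_j)^+\|_{L_2(Q_{r_j})}$ and hence $\sup_{Q_{1/2}} v \le M$, provided $M$ is chosen comparable to $\|v\|_{L_2(Q_1)} + \|g\|_{L_\infty(-1,0)}$. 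A final interpolation between $L_\infty$ and $L_1$, absorbed along a chain of intermediate cylinders $Q_{\theta_k}$ with $\theta_k \searrow 1/2$, replaces $\|v\|_{L_2(Q_1)}$ by the $L_1$ mass of $u^+$, which together with $g$ reconstitutes exactly the global weighted tail integral on the right-hand side.

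The main obstacle will be handling the nonlocal tail uniformly in $\sigma$. The normalization $(2-\sigma)$ in $\mathcal{L}_0$ is what keeps the source term $g$ and the associated constants bounded as $\sigma \to 2$, so the energy inequality must be arranged so that this factor travels cleanly through the iteration rather than degenerating. A related subtlety is that $\cM^+$ is sublinear rather than linear, so the usual linear test-function computation must be replaced by pointwise inequalities between $\cM^+(\eta(v-k_j)^+)$ and the truncated quantities; this is where the monotonicity of $\cM^+$ in $u$ and the elementary inequality $\delta(u^+) \le \delta u$ at points where $u > k_j$ become essential. Once these two points are controlled, the iteration closes and the stated estimate follows by the standard covering argument to pass from concentric cylinders to the general $(t_1',t_2]\times \Omega' \subset\subset (t_1,t_2]\times \Omega$.
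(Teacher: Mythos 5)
The paper does not prove Proposition \ref{thm 8.261}; it is quoted directly from \cite[Corollary~6.2]{ChangD14}, whose proof runs through the nonlocal Krylov--Safonov machinery (ABP-type measure estimate, a barrier / point estimate, and a Calder\'on--Zygmund cube decomposition) in the spirit of \cite{CS09,SS14}. Your proposal instead attempts a De Giorgi energy iteration \`a la Caffarelli--Chan--Vasseur. This is a genuinely different route, and I do not think it can be made to work here, for a structural reason: $\cM^+$ is a \emph{non-divergence form} operator --- a supremum of linear integral operators whose effective kernel (which of $\Lambda$ or $\lambda$ multiplies $\delta u$ at each $y$) depends on $u$ itself and on the base point $x$, and is not symmetric in $(x,y)$. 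The Caccioppoli step you rely on, testing the inequality against $(v-k_j)^+\eta_j^2$ and symmetrizing into a Gagliardo-type energy, needs a symmetric bilinear form $\int\!\int (u(x)-u(y))(\varphi(x)-\varphi(y))K(x,y)\,dx\,dy$ with $K(x,y)=K(y,x)$, and no such structure exists for $\cM^+$; the energy inequality that would drive the iteration does not close.

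Your parenthetical fallback, that one could ``equivalently'' invoke the weak Harnack of Proposition~\ref{thm 8.311} on barrier-corrected auxiliary functions, is not equivalent and not a one-line fix: weak Harnack is a \emph{lower} bound for nonnegative supersolutions, the opposite inequality, and converting it into a local sup bound for subsolutions is exactly what the barrier / point-estimate lemma of \cite{ChangD14,SS14} accomplishes and what your sketch leaves out. Two smaller inaccuracies for the record: at a point with $u>k_j\ge 0$ one has $\delta(u^+)\ge\delta u$, not $\le$; and $\cM^+$ is not monotone in $u$ in the pointwise sense your step seems to use, only degenerate elliptic at touching points, so the bound for the localized source term $g$ via the upper barrier in $\mathcal{L}_0$ must be argued by hand on each of $(\delta w)^\pm$ separately.
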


Let us point out that the kernels considered in \cite{ChangD14} are more general than our kernels. Specifically, Chang-Lara and D\'avila considered  when $\sigma\in[1,2)$
\begin{align*}
Lu=(2-\sigma)\int_{\bR^d}\hat{\delta}u(x,y)K(y)\,dy+b\cdot Du(x),
\end{align*}
where $\hat{\delta}u(x,y)=u(x+y)-u(x)-D u(x)y\chi_{B_1}$, $K(y)\in \mathcal{L}_0$, and for some $\beta>0$,
\begin{align}\label{eq 12.301}
\sup_{r\in (0,1)}r^{\sigma-1}\Big|b+(2-\sigma)\int_{B_1\setminus B_r}yK(y)\,dy\Big|\le \beta.
\end{align}
Note that for $\sigma>1$, since
\begin{equation*}
\delta u(x,y)=\hat{\delta} u(x,y)-Du(x)y\chi_{B_1^c},
\end{equation*}
we can rewrite our operator and get
\begin{equation*}
b=-(2-\sigma)\int_{B_1^c}yK(y)\,dy.
\end{equation*}
Obviously, $|b|\le C$, where $C$ depends $d$, $\sigma$, and $\Lambda$,  and it is easy to check that \eqref{eq 12.301} holds for $b$ and $K$ above.

The next proposition is  \cite[Theorem 7.1]{SS14}.
\begin{proposition}\label{thm 10.131}
Let $0<\sigma_0\le \sigma<2$ and $u$ satisfy in $Q_1$
\begin{equation*}
u_t-\cM^+u\le C_0\quad\text{and}\quad
u_t-\cM^-u\ge -C_0.
\end{equation*}
Then there are constants $\gamma\in (0,1)$ and $C>0$ only depending on $d$, $\sigma_0$, $\lambda$, and $\Lambda$ such that
\begin{equation*}
[u]_{\gamma/\sigma,\gamma;Q_{1/2}}\le C\|u\|_{L_\infty((-1,0); L_1(\omega_\sigma))}+CC_0.
\end{equation*}
\end{proposition}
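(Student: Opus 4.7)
The plan is to deduce Hölder continuity from the one-sided point estimates already in hand (Propositions \ref{thm 8.311} and \ref{thm 8.261}) via an oscillation-decay iteration in the style of Krylov--Safonov. Since the Pucci operators $\cM^{\pm}$ are invariant under the parabolic rescaling $u(t,x)\mapsto u(r^{\sigma}t,rx)$ (the right-hand side then becomes $r^{\sigma}C_{0}$), it suffices to prove a single oscillation-decay step at unit scale and then iterate dyadically, carefully tracking the tails produced because $u$ lives on all of $\bR^{d}$.

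The key lemma I would prove is: there exist $\rho\in(0,1)$ and $\theta\in(0,1)$, depending only on $d,\lambda,\Lambda,\sigma_{0}$, such that if $u$ satisfies the two differential inequalities in $Q_{1}$ with $\|u\|_{L_{\infty}((-1,0)\times\bR^{d})}\le 1$ and $C_{0}\le\delta_{0}$ for a small universal $\delta_{0}$, then $\operatorname{osc}_{Q_{\rho}}u\le 2-\theta$. To prove this, replacing $u$ by $-u$ if necessary, assume $|\{u\le 0\}\cap Q_{3/4}|\ge |Q_{3/4}|/2$, and set $v=1-u$, which is nonnegative in $(-1,0)\times\bR^{d}$. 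Since $\cM^{\pm}$ are unchanged by adding constants and $\cM^{-}(-w)=-\cM^{+}w$ (checked directly from the definition of $\cM^{\pm}$ in each of the three ranges of $\sigma$), one gets $v_{t}-\cM^{-}v\ge -C_{0}$ in $Q_{1}$. Since $v\ge 1$ on a set of definite measure inside $Q_{3/4}$, a standard Krylov--Safonov covering (``growing ink-spot'') argument propagates this measure lower bound into the lower cylinder $(-1,-2^{-\sigma})\times B_{1/4}$, at which point Proposition \ref{thm 8.311} yields $\inf_{Q_{1/4}}v\ge c_{0}$ for a universal $c_{0}>0$, i.e.\ $u\le 1-c_{0}$ on $Q_{1/4}$.

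I would then remove the global bound $\|u\|_{L_{\infty}}\le 1$ in favour of the integral tail norm. The mechanism is that freezing $u$ outside $Q_{1}$ perturbs $\cM^{\pm}u$ by a quantity bounded pointwise by $C\|u\|_{L_{\infty}((-1,0);L_{1}(\omega_{\sigma}))}$, which is absorbed into the right-hand side together with $C_{0}$. Iterating the oscillation-decay step at scales $\rho^{k}$ around any point of $Q_{1/2}$, and bookkeeping the rescaled tails (which sum geometrically because $\sum\rho^{k\alpha}$ converges), one obtains
\begin{equation*}
\operatorname{osc}_{Q_{\rho^{k}}}u\le C\bigl(1-\tfrac{\theta}{2}\bigr)^{k}\bigl(\|u\|_{L_{\infty}((-1,0);L_{1}(\omega_{\sigma}))}+C_{0}\bigr).
\end{equation*}
Choosing $\gamma$ by $1-\theta/2=\rho^{\gamma\sigma}$ converts this geometric decay into the desired bound on $[u]_{\gamma/\sigma,\gamma;Q_{1/2}}$ via the standard characterisation of parabolic Hölder seminorms through oscillation on dyadic cylinders.

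The main obstacle I expect is the simultaneous management of the nonlocal tails and the uniformity as $\sigma\nearrow 2$. The former requires that the outside-of-cylinder contribution to $\cM^{\pm}u$ be absorbed cleanly at every dyadic scale without the constants exploding, which is why the natural right-hand side is the weighted $L_{1}(\omega_{\sigma})$ tail norm; the latter is secured by the $(2-\sigma)$ normalisation built into $\mathcal{L}_{0}$, which makes the constants $C_{1},\varepsilon_{1}$ in Proposition \ref{thm 8.311}, and hence $\gamma$ and $C$, uniformly bounded for $\sigma\in[\sigma_{0},2)$.
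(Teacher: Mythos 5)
The paper does not supply its own proof of this proposition: it cites it as \cite[Theorem~7.1]{SS14}, and then remarks that the sharpening of $\|u\|_{L_\infty((-1,0)\times\bR^d)}$ to $\|u\|_{L_\infty((-1,0);L_1(\omega_\sigma))}$ follows from ``a simple localization argument,'' pointing to \cite[Corollary~7.1]{ChangD14}. Your sketch is therefore filling in what the references do rather than paralleling an argument in the paper, and the outline is sound: weak Harnack $\Rightarrow$ diminishing oscillation $\Rightarrow$ dyadic iteration, with the localization step converting the global $L_\infty$ bound into the tail norm. Two small inaccuracies are worth flagging. First, the ``growing ink-spots'' covering is not needed at the point you invoke it. After replacing $u$ by $-u$ if necessary, one may assume the measure lower bound $|\{u\le0\}\cap \tilde Q|\ge \tfrac12|\tilde Q|$ holds \emph{directly} in the lower cylinder $\tilde Q=(-1,-2^{-\sigma})\times B_{1/4}$ over which Proposition~\ref{thm 8.311} integrates, so the weak Harnack applies immediately to $v=1-u$ with no measure-propagation step; the ink-spots lemma is a tool used internally in the proof of the weak Harnack inequality itself, not in the passage from weak Harnack to oscillation decay. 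Second, with the paper's conventions $Q_r=(-r^\sigma,0)\times B_r$ and $[\,\cdot\,]_{\gamma/\sigma,\gamma}$, the exponent matching should read $1-\theta/2=\rho^{\gamma}$ rather than $\rho^{\gamma\sigma}$: a decay $\operatorname{osc}_{Q_{\rho^k}}u\lesssim(1-\theta/2)^k$ corresponds to $|u(t,x)-u(0,0)|\lesssim\max(|x|,|t|^{1/\sigma})^\gamma$ precisely when $\gamma=\log(1-\theta/2)/\log\rho$. Neither issue damages the argument.
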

Here
\begin{equation*}
\|u\|_{L_\infty((-1,0);L_1(\omega_\sigma))}
=(2-\sigma)\sup_{t\in(0,1)}\int_{\bR^d}\frac{|u(t,x)|}{1+|x|^{d+\sigma}}\,dx.
\end{equation*}
Note that we replaced $\|u\|_{L_\infty((-1,0)\times \bR^d)}$ by $\|u\|_{L_\infty((-1,0); L_1(\omega_\sigma))}$, which follows from a simple localization argument. See, for instance, \cite[Corollary 7.1]{ChangD14}. In the sequel, we always assume $\gamma<\sigma$.

We finish this section by proving the following global H\"older estimate.
\begin{lemma}\label{lemma 12.271}
Let  $u$ satisfy in $Q_1$
\begin{equation*}
u_t-\cM^+u\le C_0\quad\text{and}\quad
u_t-\cM^-u\ge -C_0,
\end{equation*}
where $C_0$ is a constant and $u\equiv 0$ in  $\bR^{d+1}_0\setminus Q_1$. Then there exists a constant $\alpha\in (0,1)$ depending on $d,\lambda,\Lambda$, and $\sigma$ (uniformly as $\sigma\to 2$), so that
\begin{equation*}
[u]_{\alpha/\sigma,\alpha;Q_1}\le CC_0,
\end{equation*}
where $C$ depends on $d,\lambda,\Lambda$, and $\sigma$, which is uniformly bounded as $\sigma\to 2$.
\end{lemma}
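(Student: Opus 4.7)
\textbf{Proof plan for Lemma \ref{lemma 12.271}.} The strategy is to combine the interior H\"older estimate of Proposition \ref{thm 10.131} with a boundary barrier argument that exploits the fact $u\equiv 0$ outside $Q_1$.

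First I would establish $\|u\|_{L_\infty(Q_1)}\le CC_0$ by the maximum principle for the Pucci extremal inequalities. More precisely, the function $w(t,x)=CC_0(t+1)$ satisfies $w_t-\cM^-w\ge CC_0/2>C_0$ for $C$ large, and $w\ge 0\ge u$ on $\partial_p Q_1\cup\{t=-1\}$, so comparison (which is standard for viscosity solutions of the Pucci inequalities and can be checked here by the usual touching-from-above argument) gives $u\le CC_0$; the reverse inequality is symmetric.

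Next I would prove a boundary oscillation decay: for each $(t_0,x_0)\in\partial_p Q_1$ and each $r\in (0,1/2)$,
\begin{equation*}
\mathrm{osc}_{Q_r(t_0,x_0)\cap\bR^{d+1}_0}u\le CC_0 r^\alpha,
\end{equation*}
for some $\alpha>0$ depending on $d,\lambda,\Lambda,\sigma$ (uniformly as $\sigma\to 2$). The proof is a one-sided Krylov--Safonov type iteration: starting from $\|u\|_{L_\infty}\le CC_0$, a barrier $\phi$ is constructed so that $\phi_t-\cM^+\phi\le -1$ in a region $Q_r(t_0,x_0)\cap Q_1$, $\phi\ge 1$ on a ``large'' portion of the complement (where $u=0$), and $\phi\to 0$ at $(t_0,x_0)$ at a polynomial rate. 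For interior boundary points on $\{|x|=1\}$ the natural candidate is a shifted radial function $\phi(x)=((|x|-1+c r)_+)^{\alpha'}\wedge M$, time-constant; for the bottom $\{t=-1\}$ the fact that $u\equiv 0$ for $t\le -1$ and the scaling $t\sim r^\sigma$ make the estimate trivial. Since the kernels need not be symmetric, one must verify that applying $\cM^+$ to such radial barriers is compatible with the cancellation condition \eqref{eq10.58} at $\sigma=1$; this is where the nonsymmetric case requires the most care and is the main obstacle. Once one such barrier exists, iterating the halving of oscillation on dyadic cylinders produces the polynomial decay.

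For points that are \emph{interior} to $Q_1$, at parabolic distance $\rho$ from $\partial_p Q_1$, I would apply Proposition \ref{thm 10.131} after rescaling to $Q_\rho(t_0,x_0)$, giving
\begin{equation*}
[u]_{\gamma/\sigma,\gamma;Q_{\rho/2}(t_0,x_0)}\le C\rho^{-\gamma}\|u\|_{L_\infty((t_0-\rho^\sigma,t_0);L_1(\omega_\sigma))}+CC_0\le CC_0\rho^{-\gamma},
\end{equation*}
where the last step uses $\|u\|_{L_\infty}\le CC_0$ and $u\equiv 0$ outside $Q_1$ to bound the tail weight.

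Finally I would chain the two estimates in the usual way. For two points $(t,x),(s,y)\in Q_1$, let $d$ be their parabolic distance and $\rho$ the minimum of their parabolic distances to $\partial_p Q_1$. If $d\le \rho/2$, use the interior estimate on a single cylinder of radius $\rho$; this yields a bound of $CC_0 \rho^{-\gamma}d^\gamma$, which after choosing $\alpha<\gamma$ and combining with the boundary decay $\mathrm{osc}\le CC_0\rho^{\alpha}$ produces the claimed estimate. If $d>\rho/2$, both points lie within parabolic distance comparable to $d$ from $\partial_p Q_1$, and the boundary estimate at the nearer boundary point directly gives $|u(t,x)|+|u(s,y)|\le CC_0 d^\alpha$. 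Taking $\alpha$ smaller than both the interior exponent $\gamma$ from Proposition \ref{thm 10.131} and the boundary exponent produces the desired bound, and tracking constants confirms uniform boundedness as $\sigma\to 2$.
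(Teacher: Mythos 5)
Your plan is essentially the same as the paper's: get the interior estimate from Proposition \ref{thm 10.131}, obtain boundary decay via a barrier supersolution near the lateral boundary, handle the bottom $\{t=-1\}$ with a linear-in-$t$ barrier, and chain the two. The one substantive difference is in the boundary decay step. You propose a local oscillation-halving iteration driven by a shifted radial barrier $\phi(x)=\big((|x|-1+cr)_+\big)^{\alpha'}\wedge M$, together with a ``large measure'' argument. The paper avoids the iteration entirely: it constructs a single half-space power barrier $\phi(x)=x_d^\beta$ with $\beta$ small and verifies directly that $\cM^+\phi(x)<-\hat C\,x_d^{\beta-\sigma}$ in $\{x_d>0\}$, which, after multiplying by $C_0/(2^{\beta-\sigma}\hat C)$ and translating so $B_1$ becomes $B_1(e)$, gives the one-shot comparison $|u|\le C\,C_0\,x_d^\beta$ and hence the polynomial decay immediately. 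The half-space barrier is slightly cleaner than the radial one because it avoids the curvature terms and has an exact scaling $\cM^+\phi(x)=x_d^{\beta-\sigma}\cM^+\phi(e)$, reducing everything to a pointwise estimate at $e$. Also note a sign slip in your write-up: for a comparison supersolution one wants $\phi_t-\cM^+\phi\ge C_0$ (not $\le -1$), so that $u\le\phi$ follows from $u_t-\cM^+u\le C_0$ and $u\le\phi$ outside $Q_1$.

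Finally, you correctly flag the $\sigma=1$, nonsymmetric case as the delicate spot. The paper handles it by splitting $\cM^+\phi(e)$ into a far-field piece $\RN{1}_2$, which is a fixed negative constant $-C_3$ (since $\phi\equiv0$ for $y_d<-1$), and a near-field piece $\RN{1}_1$, whose positive part is supported only in $\{y_d>0\}\cap\{|y|>1\}$ and tends to $0$ as $\beta\to0$ by monotone convergence, while the integrand in $\{-1<y_d<0\}\cup\{|y|<1\}$ is nonpositive. This makes no use of the cancellation condition \eqref{eq10.58} directly; it simply uses that $\cM^+$ is a supremum over kernels in $\mathcal{L}_0$, so $\cM^+\phi(e)\le-C_3/2<0$ for $\beta$ small. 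If you flesh out your barrier computation along these lines, the plan goes through.
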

\begin{proof}
Thanks to the interior H\"older estimate Proposition \ref{thm 10.131}, it suffices to prove the estimate near the parabolic boundary of $Q_1$. We consider the lateral boundary and bottom separately.
Define $\phi:\bR^d\to \bR^+$ as
\begin{equation*}
\phi(x)=\begin{cases}
x_d^\beta \quad&\text{for}\,\, x_d>0\\
0\quad &\text{for}\,\, x_d\le 0
\end{cases},
\end{equation*}
where $\beta\in(0,1)$. We claim that for sufficiently small $\beta\in (0,\sigma_0)$  depending on $d$, $\lambda$, $\Lambda$, and $\sigma_0$, we have
\begin{equation*}
\cM^+\phi(x)<-\hat{C}x_d^{\beta-\sigma}\quad \text{in}\quad \{x_d>0\},
\end{equation*}
where $\hat{C}$ depends on $d,\lambda,\Lambda$, and $\sigma_0$. By scaling,  it is obvious that
\begin{equation*}
\cM^+\phi(x)=x_d^{\beta-\sigma}\cM^+\phi(e),
\end{equation*}
where $e=(0,0,\ldots,0,1)$. Therefore, we only need to estimate $\cM^+\phi(e)$.

Case 1: $\sigma>1$.
 By definition,
\begin{align*}
&\cM^+\phi(e)\\
&=(2-\sigma)x_d^{\beta-\sigma}\left(\int_{\{y_d>-1\}}+\int_{\{y_d<- 1\}}\right)\big(\Lambda(\delta \phi(e,y))^+-\lambda(\delta \phi(e,y))^-\big)\frac{1}{|y|^{d+\sigma}}\,dy\\
&=:(2-\sigma)x_d^{\beta-\sigma}(\RN{1}_1+\RN{1}_2).
\end{align*}
When $y_d>-1$, by concavity, it follows that
\begin{equation*}%\label{eq 1.131}
\phi(e+y)=(1+y_d)^\beta<1+\beta y_d\quad\text{and}\quad\delta \phi(e,y)\le 0.
\end{equation*}
Therefore,
\begin{align*}
\RN{1}_1&\le \int_{\{|y_d|<1\}}\lambda\delta \phi(e,y)\frac{1}{|y|^{d+\sigma}}\,dy\\
&=\lambda\int_{y_d\in(0,1)}\big((1+y_d)^{\beta}+(1-y_d)^\beta-2\big)\frac{1}{|y|^{d+\sigma}}\,dy.
\end{align*}
Notice that for any $s\in (-1,1)$,
$$(1+s)^\beta<1+\beta s+\frac{\beta(\beta-1)}{2}s^2+\frac{\beta(\beta-1)(\beta-2)}{6}s^3,$$
which implies that for $y_d\in (0,1)$
\begin{equation*}
(1+y_d)^{\beta}+(1-y_d)^\beta-2< \beta(\beta-1)y_d^2.%\label{eq 1.132}
\end{equation*}
Therefore,
\begin{align*}
\RN{1}_1< \lambda\beta(\beta-1)\int_{y_d\in(0,1)}\frac{y_d^2}{|y|^{d+\sigma}}\,dy
=C_1\frac{\beta(\beta-1)}{2-\sigma},
\end{align*}
where $C_1$ depends on $d$ and $\lambda$.

Now we turn to $\RN{1}_2$. Since $\phi(e+y)=0$ when $y_d<-1$, we have
\begin{align*}
\RN{1}_2&=\int_{\{y_d<-1\}}\frac{\Lambda(-\beta y_d-1)^+-\lambda(-\beta y_d-1)^-}{|y|^{d+\sigma}}\,dy\\%+\int_{y_d>1}\frac{\lambda\delta\phi(e,y)}{|y|^{d+\sigma}}\,dy\\
&\le \int_{\{y_d<-1/\beta\}}\frac{\Lambda(-\beta y_d-1)}{|y|^{d+\sigma}}\,dy = C_2\beta^{\sigma},
\end{align*}
where $C_2$ depends on $\Lambda,d$, and $\sigma$, and is uniformly bounded as $\sigma\to 2$.
Thanks to the estimates of $\RN{1}_1$ and $\RN{1}_2$ above, it follows that
\begin{equation*}
\cM^+\phi(e)\le C_1\beta(\beta-1)+C_2\beta^\sigma.
\end{equation*}
By choosing $\beta$ sufficiently small depending on $\lambda$, $\Lambda$, $d$,  and $\sigma$ (but uniformly as $\sigma\to 2$) so that
$$C_1(\beta-1)+C_2\beta^{\sigma-1}\le -C_1/2,$$
the claim is proved.

Case 2: $\sigma< 1$.
Let $\RN{1}_1$ and $\RN{1}_2$ be defined as before.
Since $\phi(x)=0$ for $x_d<0$, we get
\begin{align}
\RN{1}_2=-(2-\sigma)\int_{\{y_d<-1\}}\frac{\lambda}{|y|^{d+\sigma}}\,dy=-C_3,\label{eq 1.141}
\end{align}
where $C_3>0$ depends on $\sigma,\lambda$, and $d$.
For $\RN{1}_1$, we have
\begin{align*}
\RN{1}_1&=(2-\sigma)\int_{\{y_d>0\}}\frac{\Lambda((1+y_d)^\beta-1)}{|y|^{d+\sigma}}\,dy
-(2-\sigma)\int_{y_d\in(-1,0)}\frac{\lambda(1-(1+y_d)^\beta)}{|y|^{d+\sigma}}\,dy\\ &\le (2-\sigma)\Lambda\int_{\{y_d>0\}}\frac{(1+y_d)^\beta-1}{|y|^{d+\sigma}}\,dy
\to 0\quad \text{as}\quad \beta\to 0
%\label{eq 1.142}
\end{align*}
by the monotone convergence theorem.
Therefore, we can choose $\beta$ small depending on $\Lambda$, $\lambda$, $d$, and $\sigma$ so that
\begin{equation*}
\cM^+\phi(e)\le -C_3/2.
\end{equation*}
The claim is proved.

Case 3: $\sigma=1$. In this case, we still have \eqref{eq 1.141}. For $\RN{1}_1$, we notice that integrand in the region $\{-1<y_d<0\}\cup\{|y|<1\}$ is negative, and
$$
\int_{\{y_d>0\}\cap \{|y|>1\}}\frac{(1+y_d)^\beta-1}{|y|^{d+\sigma}}\,dy
\to 0\quad \text{as}\quad \beta\to 0
$$
by the monotone convergence theorem. Thus the claim follows as well.

Now we are ready to consider $u$ near the lateral boundary. By a translation and rotation of the coordinates, we replace the ball $B_1$ by $B_1(e)$ and estimate $u$ near the origin. Define the barrier function $\psi(t,x)=\frac{C_0}{2^{\beta-\sigma}\hat{C}}\phi(x)$. Obviously,
\begin{equation*}
\partial_t \psi(t,x)-\cM^+\psi(t,x)=-\frac{C_0}{2^{\beta-\sigma}\hat{C}}\cM^+\phi(x)
>\frac{C_0}{2^{\beta-\sigma}}x_d^{\beta-\sigma}>C_0
\end{equation*}
when $x\in B_1(e)$.
On the other hand, $\psi\ge 0$ in $\bR\times\bR^d$. Since
 \begin{equation*}
 u_t-\cM^+u\le C_0
 \end{equation*}
and $u\equiv 0$ outside $Q_1$,  by the comparison principle,
\begin{equation*}
  u(t,x)\le \psi(t,x)=\frac{C_0}{2^{\beta-\sigma}\hat{C}}x_d^\beta.
\end{equation*}
By considering $-u$ instead of $u$, we have $u\ge-\frac{C_0}{2^{\beta-\sigma}\hat{C}}x_d^\beta$. Hence, around the origin $|u|\le C|x|^\beta$.  By rotation of the coordinate, we obtain the estimate near the lateral boundary.

For the bottom, let $\tilde \phi=C_0(t+1)$ so that $\tilde{\phi}(-1)=0$ and $\tilde{\phi}'(t)=C_0$. This yields that
  \begin{equation*}
  \partial_t\tilde{\phi}-\cM^+\phi=C_0.
  \end{equation*}
  Moreover, $\tilde{\phi}\ge 0$ in $(-1,0)\times\bR^d$. By the comparison principle again, $u\le \phi$ in $Q_1$. In particular, near the bottom $u\le C_0(t+1)$, which further implies $|u|\le C_0(t+1)$ by symmetry.

Combining the estimates of lateral boundary and bottom with the interior H\"older estimate, we prove the lemma.
\end{proof}

\section{A Liouville theorem}
The aim of this section is to prove the following Liouville theorem for the fully nonlinear parabolic nonlocal equation with non-symmetric kernels.  The elliptic version for symmetric kernels  was established in \cite{Serra15}.

\begin{theorem}\label{thm 10.251}
Let $\sigma\in (0,2)$. There is a constant $\hat{\alpha}\in (0,1/2)$ depending on $d$, $\lambda$, $\Lambda$, and $\sigma$ (but is uniform as $\sigma\to 2$) such that the following statement holds.
Let $\alpha\in (0,\hat{\alpha})$ be such that $[\sigma+\hat{\alpha}]<\sigma+\alpha$ and suppose that $u\in C_{\text{loc}}^{1+\frac{\alpha}{\sigma},\sigma+\alpha}(\bR_0^{d+1})$ satisfies the following properties:

(i) For any $\beta\in [0,\sigma+\alpha]$ and $R\ge 1$, we have
\begin{equation}
                                        \label{eq1.35}
[u]_{\beta/\sigma,\beta;Q_R}\le N_0 R^{\sigma+\alpha-\beta};
\end{equation}

(ii) For any $(s,h)\in \bR^{d+1}_0$, we have
\begin{align}\label{eq 10.131}
\partial_t\big(u(\cdot+s,\cdot+h)-u\big)
-\cM^-\big(u(\cdot+s,\cdot+h)-u\big)\ge 0,\\
\partial_t\big(u(\cdot+s,\cdot+h)-u\big)
-\cM^+\big(u(\cdot+s,\cdot+h)-u\big)\le 0;\label{eq 10.132}
\end{align}

(iii) If $\sigma>1$, for any nonnegative measure $\mu$ in $\bR^d$ with compact support,
\begin{align*}
\partial_t u_\mu-\mathcal{M}^+u_\mu\le 0,\quad\text{where}\quad u_\mu(t,x) = \int_{\bR^d}\delta u(t,x,h)\,d\mu(h).
\end{align*}
%$\int_{\bR^d}\delta u(t,x,h)\,d\mu(h)$ .
Then $u$ is a polynomial of degree $\nu$ in $x$ and $1$ in $t$, where $\nu$ is the integer part of $\sigma+\alpha$.
\end{theorem}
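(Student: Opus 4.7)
The strategy is a Liouville-type argument via the interior Hölder estimate (Proposition \ref{thm 10.131}) combined with a scaling iteration.

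First I would establish the following baseline Liouville lemma: if $w$ satisfies both Pucci inequalities $\cM^- w \le \partial_t w \le \cM^+ w$ on $\bR^{d+1}_0$ and $\|w\|_{L^\infty(Q_R)} \le M R^\beta$ for every $R \ge 1$ and some $\beta \in [0, \gamma)$ (where $\gamma$ is the Hölder exponent from Proposition \ref{thm 10.131}), then $w$ is constant. This is a direct rescaling of Proposition \ref{thm 10.131} to $Q_R$: one obtains $[w]_{\gamma/\sigma, \gamma; Q_{R/2}} \le C R^{-\gamma} \|w\|_{L^\infty(Q_R)} \le C M R^{\beta - \gamma}$, which tends to zero as $R \to \infty$, forcing the seminorm to vanish.

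Next I would apply this lemma to suitable derivatives of $u$. By condition (ii) applied with $s = 0$, for each $h \in \bR^d$ the difference $\Delta_h u = u(\cdot, \cdot + h) - u(\cdot, \cdot)$ satisfies both Pucci inequalities on $\bR^{d+1}_0$. When $\nu \ge 1$ (so $u \in C^\nu$ in $x$ globally, with uniform Hölder seminorms of $D^\nu u$ coming from (i) with $\beta = \sigma + \alpha$), iterating the limit $\epsilon^{-1} \Delta_{\epsilon e_j}$ preserves the Pucci property by dominated convergence on the $\delta$-integrand of $\cM^\pm$, so each component of $D^\nu u$ satisfies both Pucci inequalities. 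Condition (i) with $\beta = \nu$ then gives the growth bound $\|D^\nu u\|_{L^\infty(Q_R)} \le N_0 R^{\sigma + \alpha - \nu}$. Provided $\hat{\alpha}$ is small enough that $\sigma + \alpha - \nu < \gamma$ for all $\alpha \in (0, \hat{\alpha})$, the Liouville lemma forces $D^\nu u \equiv A$ for some constant multilinear form $A$; integrating yields $u(t, x) = P(x) + f(t)$ with $P$ a polynomial of degree $\nu$. For the exceptional case $\nu = 0$ (necessarily $\sigma + \alpha < 1$, hence $\sigma < 1$), one instead applies the Liouville lemma directly to $\Delta_h u$: condition (i) with $\beta = \sigma + \alpha$ gives the global bound $\|\Delta_h u\|_{L^\infty(\bR^{d+1}_0)} \le C N_0 |h|^{\sigma + \alpha}$, so $\Delta_h u \equiv c(h)$ is constant in $(t,x)$; the additivity identity $\Delta_{h+h'} u = \Delta_h u + \tau_h \Delta_{h'} u$ shows $c$ is linear in $h$, and the growth bound $|c(h)| \le C |h|^{\sigma + \alpha}$ with $\sigma + \alpha < 1$ forces $c \equiv 0$, so $u(t, x) = f(t)$.

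To see that $f$ is affine in $t$, apply condition (ii) with $h = 0$: for every $s \le 0$, the function $v(t, x) := u(t + s, x) - u(t, x) = f(t + s) - f(t)$ satisfies both Pucci inequalities. Since $v$ is independent of $x$, $\cM^\pm v \equiv 0$, and the Pucci relations collapse to $\partial_t v \equiv 0$. Hence $f'(t + s) = f'(t)$ for all admissible $t$ and all $s \le 0$, so $f'$ is constant and $f(t) = c_0 + c_1 t$. Combined, $u(t, x) = P(x) + c_0 + c_1 t$, as required.

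The main obstacle is ensuring the smallness $\sigma + \alpha - \nu < \gamma$ uniformly over the range of admissible $\alpha$; this dictates the choice of $\hat{\alpha}$ in terms of the interior Hölder exponent $\gamma$ from Proposition \ref{thm 10.131}, which depends only on $d, \sigma, \lambda, \Lambda$ and (by the remarks on that proposition) is bounded below as $\sigma \to 2$. Condition (iii) is not needed in the argument sketched above, since iterated space differences are circumvented by passing to the classical derivative $D^\nu u$; it would become relevant in a version of the result that allowed weaker regularity in (i) or treated the edge cases where $\sigma + \alpha$ is larger than the range where this direct approach suffices.
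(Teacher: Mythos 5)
There is a genuine gap at the step asserting that ``iterating the limit $\epsilon^{-1}\Delta_{\epsilon e_j}$ preserves the Pucci property\ldots so each component of $D^\nu u$ satisfies both Pucci inequalities.'' Condition~(ii) implies that first differences $\Delta_h u$ (and hence, by a limit, $D_j u$ and $u_t$) satisfy both $\partial_t w - \cM^- w\ge 0$ and $\partial_t w-\cM^+ w\le 0$. But to pass to $D_kD_j u$ you would need the second difference $\Delta_{h'}\Delta_h u$ to satisfy both Pucci inequalities, and condition~(ii) says nothing about differences of $\Delta_h u$. The Pucci extremal operators are only super-/sub-additive, not linear, so the differences of a two-sided Pucci solution are not themselves two-sided Pucci solutions; concretely, $\cM^+(a+b)\le \cM^+a+\cM^+b$ and $\cM^-(a+b)\ge \cM^-a+\cM^-b$ go in the wrong direction. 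For a concave equation, $D^2_{ee}u$ is only a subsolution of the maximal operator, never in general a supersolution, and this one-sidedness is precisely the well-known obstruction behind the Evans--Krylov theorem. Your closing remark that ``Condition~(iii) is not needed'' is therefore mistaken: condition~(iii) is exactly the encoding of this one-sided concavity structure, and without it the statement is false (the two-sided Pucci conditions~(i)--(ii) alone do not force $u$ to be a polynomial when $\nu=2$).

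In addition, your Liouville lemma requires the growth exponent to be strictly below the interior H\"older exponent $\gamma$ from Proposition~\ref{thm 10.131}, and for $\nu=1$ the exponent you obtain for $Du$ is $\sigma+\alpha-1$, which cannot be made less than $\gamma$ when $\sigma$ is close to $2$ (the theorem must hold for all $\sigma\in(0,2)$ with $\hat\alpha$ uniform as $\sigma\to 2$). The paper's proof instead runs a genuine nonlocal Evans--Krylov argument: it sets
\[
P(t,x)=\int_{\bR^d}\big(\delta u(t,x,y)-\delta u(0,0,y)\big)^+\frac{2-\sigma}{|y|^{d+\sigma}}\,dy,\qquad
N(t,x)=\int_{\bR^d}\big(\delta u(t,x,y)-\delta u(0,0,y)\big)^-\frac{2-\sigma}{|y|^{d+\sigma}}\,dy,
\]
shows via condition~(iii) that an auxiliary function built from $P$ is a supersolution of $\cM^-$ (Lemma~\ref{lem3.3} and the ensuing estimate \eqref{eq 10.1412}), uses a measure estimate (Lemma~\ref{lem3.5}) and the weak Harnack inequality (Corollary~\ref{weak_harnack}) to produce geometric decay of $\sup_{Q_{\kappa^{-l}}}P$ and $\sup_{Q_{\kappa^{-l}}}N$, and then rescales to conclude $P\equiv N\equiv 0$, which forces $u$ to be a polynomial of degree $\nu$ in $x$ and $1$ in $t$. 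None of this machinery appears in your proposal, and the direct Liouville argument you sketch does not substitute for it.
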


\begin{remark}
As in \cite{Serra15}, it is possible to relax Condition $(i)$ in Theorem \ref{thm 10.251} by assuming that \eqref{eq1.35} is satisfied for any $\beta\in [0,\sigma+\alpha']$ and $R\ge 1$, where $\alpha'\in (0,\alpha)$ is a constant satisfying $\sigma+\alpha'>\nu$. A simple computation reveals that in the case we also require that $\sigma>1+\alpha-\alpha'$ when $\sigma>1$ and $\nu=1$;
and $\sigma>\alpha-\alpha'$ when $\sigma<1$ and $\nu=0$.
\end{remark}

To prove Theorem \ref{thm 10.251}, we first present a few lemmas. Define
\begin{align*}
P(t,x)=\int_{\bR^d} \big(\delta u(t,x,y)-\delta u(0,0,y)\big)^+\frac{2-\sigma}{|y|^{d+\sigma}}\,dy,\\%\label{eq 8.84}\\
N(t,x)=\int_{\bR^d} \big(\delta u(t,x,y)-\delta u(0,0,y)\big)^-\frac{2-\sigma}{|y|^{d+\sigma}}\,dy.%\label{eq 8.85}
\end{align*}

\begin{lemma}
                        \label{lem3.3}
Let $\hat\alpha\in (0,1/2)$ be a constant satisfying $\hat\alpha<\sigma/2$. Under the conditions $(i)$ and $(ii)$ of Theorem \ref{thm 10.251}, for any $\kappa\ge 2$ and $l\in \mathbb{N}\cup \{0\}$, we have
\begin{equation}\label{eq 8.311a}
\sup_{Q_{\kappa^l}}\big(P+N+|u_t-u_t(0,0)|\big)\le CN_0\kappa^{\alpha l}\le CN_0\kappa^{\hat{\alpha}l}
\end{equation}
and
\begin{equation}\label{eq 10.142}
[u_t]_{\gamma/\sigma,\gamma;Q_{1/2}}\le CN_0\kappa^{\hat{\alpha}},
\end{equation}
where $C$ depends only on $d,\lambda,\Lambda$, and $\sigma$, and is uniformly bounded as $\sigma\to 2$.
\end{lemma}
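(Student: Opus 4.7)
The plan is to bound $P+N$ pointwise on $Q_{\kappa^l}$ by splitting the defining integral at $|y|=R:=\kappa^l$, to extract $|u_t-u_t(0,0)|$ from hypothesis (ii) via the Pucci identities applied to a translate of $u$, and to obtain \eqref{eq 10.142} by applying Proposition~\ref{thm 10.131} to $u_t$.

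For the inner part $\{|y|\le R\}$, hypothesis (i) with $\beta=\sigma+\alpha$ controls the pure spatial H\"older seminorm of $D^\nu u$ of order $(\sigma+\alpha)-\nu$ uniformly by $N_0$ (where $\nu=\lfloor\sigma+\alpha\rfloor$), so a Taylor-type remainder estimate yields $|\delta u(t,x,y)|+|\delta u(0,0,y)|\le CN_0|y|^{\sigma+\alpha}$ for $(t,x)\in Q_R$, and integration against $(2-\sigma)|y|^{-d-\sigma}$ contributes $CN_0R^\alpha$. For the outer part $\{|y|>R\}$, I use the algebraic decomposition
\begin{align*}
\delta u(t,x,y)-\delta u(0,0,y)
=\, &\bigl\{[u(t,x+y)-u(0,x+y)]-[u(t,x)-u(0,x)]\bigr\}\\
+&\bigl\{[u(0,x+y)-u(0,x)]-[u(0,y)-u(0,0)]\bigr\}\\
-&\,y^T[Du(t,x)-Du(0,0)]\qquad\text{(present only for $\sigma>1$)}
\end{align*}
and bound each piece via hypothesis (i) at $\beta=\sigma$ ($[u_t]_{L_\infty(Q_{|y|})}\le N_0|y|^\alpha$), $\beta=\sigma+\alpha$ (rewriting the middle piece as $\int_0^1 x^T[Du(0,y+sx)-Du(0,sx)]\,ds$ when $\sigma+\alpha>1$, or by raw H\"older in $x$ when $\sigma+\alpha<1$), and $\beta=1+\alpha$ ($[Du]_{\alpha/\sigma,\alpha;Q_R}\le N_0R^{\sigma-1}$), respectively. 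Each piece integrates against the singular weight over $\{|y|>R\}$ to $CN_0R^\alpha$, giving $\sup_{Q_R}(P+N)\le CN_0\kappa^{\alpha l}$.

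For $|u_t-u_t(0,0)|$, fix $(t_0,x_0)\in Q_{\kappa^l}$ and set $w(t,x):=u(t+t_0,x+x_0)-u(t,x)$; direct computation gives $w_t(0,0)=u_t(t_0,x_0)-u_t(0,0)$ and $\delta w(0,0,y)=\delta u(t_0,x_0,y)-\delta u(0,0,y)$, so for any admissible kernel $K_a$ the $\mathcal{L}_0$ ellipticity forces $|L_aw(0,0)|\le\Lambda(P+N)(t_0,x_0)$. Condition (ii) sandwiches $w_t(0,0)$ between $\mathcal{M}^\pm w(0,0)=\sup/\inf_aL_aw(0,0)$, yielding $|u_t(t_0,x_0)-u_t(0,0)|\le\Lambda(P+N)(t_0,x_0)\le CN_0\kappa^{\alpha l}$; this argument is uniform in $\sigma\in(0,2)$ provided the cancellation \eqref{eq10.58} is used at $\sigma=1$ to make the $-y^TDu\,\chi_{B_1}$ terms integrable.

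For \eqref{eq 10.142}, I argue that $u_t-u_t(0,0)$ itself satisfies the extremal inequalities $\partial_t(u_t)-\mathcal{M}^+u_t\le 0$ and $\partial_t(u_t)-\mathcal{M}^-u_t\ge 0$ in $\bR_0^{d+1}$, obtained by taking $t$-difference quotients in hypothesis (ii) and passing to the limit using the $C^{1+\alpha/\sigma}$-in-$t$ regularity of $u$ together with dominated convergence (the growth in (i) at $\beta=\sigma$ furnishes an $|y|^{-d-\sigma}$-integrable majorant). Proposition~\ref{thm 10.131} with $C_0=0$ then gives
\[
[u_t]_{\gamma/\sigma,\gamma;Q_{1/2}}\le C\|u_t-u_t(0,0)\|_{L_\infty((-1,0);L_1(\omega_\sigma))},
\]
and the pointwise bound $|u_t(t,x)-u_t(0,0)|\le CN_0(1+|x|^\alpha+|t|^{\alpha/\sigma})$ (obtained by choosing $\kappa^l\sim\max(|x|,|t|^{1/\sigma},1)$ and invoking \eqref{eq 8.311a}) is integrable against $(2-\sigma)(1+|x|^{d+\sigma})^{-1}$ because $\alpha<\hat\alpha<\sigma$, producing the estimate $\le CN_0\le CN_0\kappa^{\hat\alpha}$. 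The main technical obstacle is orchestrating the outer-integral decomposition so that each piece's integral against the critical weight $(2-\sigma)|y|^{-d-\sigma}$ collapses to precisely $R^\alpha$; the boundary cases $\sigma=1$ and $\sigma+\alpha\in\bZ$ require extra bookkeeping with \eqref{eq10.58} to cancel the borderline logarithmic behavior.
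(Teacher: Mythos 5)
The Pucci sandwich argument for $|u_t-u_t(0,0)|$ and the deduction of \eqref{eq 10.142} from Proposition~\ref{thm 10.131} match the paper. The route to bounding $P+N$, however, is genuinely different from the paper's, and it has a quantitative gap. Your strategy is to fix $(t,x)\in Q_R$ with $R=\kappa^l$ and split the integral defining $P(t,x)+N(t,x)$ at $|y|=R$, bounding the inner piece via the triangle inequality: $|\delta u(t,x,y)|+|\delta u(0,0,y)|\le CN_0|y|^{\sigma+\alpha}$. But then
\begin{equation*}
(2-\sigma)\int_{B_R}|y|^{\sigma+\alpha}\,\frac{dy}{|y|^{d+\sigma}}
= (2-\sigma)\,c_d\int_0^R r^{\alpha-1}\,dr
= \frac{(2-\sigma)\,c_d}{\alpha}\,R^{\alpha},
\end{equation*}
so your inner bound carries a factor $1/\alpha$. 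The lemma requires $C$ to depend only on $d,\lambda,\Lambda,\sigma$, and this is not merely cosmetic: in the proof of Theorem~\ref{thm 10.251} the exponent $\hat\alpha$ (and hence the admissible $\alpha<\hat\alpha$) is determined \emph{after} the constant $C$ of this lemma is fixed (via $\hat\alpha_1=\log(1+\theta/C)/\log\kappa$), so a $C$ that blows up as $\alpha\to0$ would make the selection of $\hat\alpha$ circular.

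The paper avoids this by not discarding the cancellation in the inner region. It bounds the modulus of continuity of $(t,x)\mapsto\delta u(t,x,y)$: for $l=|x-x'|+|t-t'|^{1/\sigma}$ and $|y|<l$ one has $|\delta u(t,x,y)-\delta u(t',x',y)|\le CN_0|y|^2\,l^{\sigma+\alpha-2}$ (the extra two powers of $|y|$ come from writing $\delta u=\int_0^1 y\cdot[Du(t,x+sy)-Du(t,x)]\,ds$ and differencing again), and then $(2-\sigma)\int_{B_l}|y|^2|y|^{-d-\sigma}\,dy=c_d\,l^{2-\sigma}$ cancels the factor $(2-\sigma)$ exactly, producing $CN_0l^\alpha$ with $C$ independent of $\alpha$. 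From this one gets $[P]_{\alpha/\sigma,\alpha;Q_1},[N]_{\alpha/\sigma,\alpha;Q_1}\le CN_0$, uses $P(0,0)=N(0,0)=0$ to conclude $P,N\le CN_0$ on $Q_1$, and then the rescaling $\hat u(t,x)=\eta^{-\sigma-\alpha}u(\eta^\sigma t,\eta x)$ upgrades this to $\sup_{Q_{\kappa^l}}(P+N)\le CN_0\kappa^{\alpha l}$. To repair your argument you would need to replace the triangle-inequality bound by the analogous double-difference estimate on the region $|y|\lesssim\max(|x|,|t|^{1/\sigma})$. As a secondary point, your bound for the third outer piece should be obtained from Condition~(i) at $\beta=\sigma+\alpha$ (giving $[Du]_{\alpha/\sigma,\sigma+\alpha-1;Q_R}\le N_0$, hence $|Du(t,x)-Du(0,0)|\le N_0R^{\sigma+\alpha-1}$); $\beta=1+\alpha$ does not control $[Du]_{\alpha/\sigma,\alpha}$ as written, since it only gives a bound for the weaker seminorm with $t$-exponent $(1+\alpha)/\sigma$.
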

\begin{proof}
We first estimate $P$ and $N$ assuming that $\nu=2$. Fix $(t,x), (t^\prime,x^\prime)\in Q_1$ and set $l=|x-x^\prime|+|t-t^\prime|^{1/\sigma}$. By Condition $(i)$, when $|y|<l$,
\begin{align}\nonumber
&|\delta u(t,x,y)-\delta u(t^\prime,x^\prime,y)|\\\nonumber
&=\Big|\int_{0}^1y\Big[Du(t,x+sy)-Du(t,x)-\big(Du(t^\prime,x^\prime+sy)
-Du(t^\prime,x^\prime)\big)\Big]\,ds\Big|\\
&\le C|y|^2l^{\sigma+\alpha-2}[u]_{1+\alpha/\sigma,\sigma+\alpha;Q_2}\le CN_0|y|^2l^{\sigma+\alpha-2}.\label{eq 8.86}
\end{align}
Similarly, when $|y|\ge l$,
\begin{align}\label{eq 11.032}
  |\delta u(t,x,y)-\delta u(t^\prime,x^\prime,y)|\le C|y|^{\sigma+\alpha-1}l[u]_{1+\alpha/\sigma,\alpha+\sigma;Q_{1+|y|}}\le CN_0l|y|^{\sigma+\alpha-1}.
\end{align}
Combining \eqref{eq 8.86} and \eqref{eq 11.032}, we have
 \begin{align}\nonumber
& \int_{\bR^d}|\delta u(t,x,y)-\delta u(t^\prime,x^\prime,y)|\frac{2-\sigma}{|y|^{d+\sigma}}\,dy\\\nonumber
 &\le CN_0l^{\sigma+\alpha-2}\int_{B_l}\frac{(2-\sigma)|y|^2}{|y|^{d+\sigma}}\,dy
+CN_0l\int_{\bR^d\setminus B_l}\frac{(2-\sigma)|y|^{\sigma+\alpha-1}}{|y|^{d+\sigma}}\,dy\\
 \label{eq 1104.2}
& \le CN_0l^{\alpha}.
 \end{align}
Hence $P,N\in C^{\alpha,\alpha/\sigma}(Q_1)$. Because $P(0,0)=N(0,0)=0$, we have
$$
P(t,x)+N(t,x)\le CN_0\quad \text{in}\,\, Q_1.
$$
By modifying the estimate above, we can prove the same estimate for $P$ when $\nu=0$ or $1$.

We then use a scaling argument. Define $\hat{u}(t,x)=\eta^{-\alpha-\sigma}u(\eta^\sigma t,\eta x)$ for any $\eta>1$. It is easily seen that $\hat{u}$ satisfies all the conditions in this lemma. Hence, we know that
 \begin{equation*}
 \int_{\bR^d}\frac{(2-\sigma)\big|\delta \hat{u}(t,x,y)-\delta \hat{u}(0,0,y)\big|}{|y|^{d+\sigma}}\,dy\le CN_0\quad \text{in}\,\, Q_1.
 \end{equation*}
Therefore,
$$
P(\eta^{\sigma}t,\eta x)+N(\eta^{\sigma}t,\eta x)\le CN_0\eta^{\alpha}\quad \text{in}\,\, Q_1,
$$
which together with \eqref{eq 10.131} and \eqref{eq 10.132} implies  \eqref{eq 8.311a}.

To prove \eqref{eq 10.142}, we take $h=0$ in \eqref{eq 10.131} and \eqref{eq 10.132}, and then multiply them by $1/s$. By letting $s\to 0$, we know that $u_t$ as well as $u_t-u_t(0,0)$ are  sub and super-solutions at the same time. By Proposition \ref{thm 10.131}, we obtain that $u_t\in C^{\gamma/\sigma,\gamma}(Q_{1/2})$ for some $\gamma>0$ depending on $d,\lambda,\Lambda$, and $\sigma_0$,  and
\begin{equation*}
                %    \label{eq 10.141}
[u_t]_{\gamma/\sigma,\gamma;Q_{1/2}}
=[u_t-u_t(0,0)]_{\gamma/\sigma,\gamma;Q_{1/2}}\le C\sup_{t\in (-1,0)}\int_{\bR^d}\frac{|u_t-u_t(0,0)|}{1+|x|^{\sigma+d}}\,dx.
\end{equation*}
Using \eqref{eq 8.311a}, for any $t\in (-1,0)$,
\begin{align*}
&\int_{\bR^d}\frac{|u_t-u_t(0,0)|}{1+|x|^{d+\sigma}}\,dx\,dt\\
&\le \int_{B_1}\frac{|u_t-u_t(0,0)|}{1+|x|^{d+\sigma}}\,dx\,dt
+\sum_{i=0}^\infty\int_{B_{\kappa^{i+1}}\setminus B_{\kappa^i}}\frac{|u_t-u_t(0,0)|}{1+|x|^{d+\sigma}}\,dx\,dt
 \end{align*}
\begin{align*}
 &\le CN_0\int_{B_1}\frac{1}{1+|x|^{d+\sigma}}\,dx\,dt+
CN_0\sum_{i=0}^\infty\int_{B_{\kappa^{i+1}}\setminus B_{\kappa^i}}\frac{\kappa^{\hat{\alpha}(i+1)}}{1+|x|^{d+\sigma}}\,dx\,dt\\
 &\le CN_0+CN_0\sum_{i=0}^\infty\kappa^{\hat{\alpha}(i+1)}
\int_{\kappa^{i}}^{\kappa^{i+1}}\frac{r^{d-1}}{1+r^{d+\sigma}}\,dr\\
&\le CN_0+CN_0 \frac{\kappa^{\hat\alpha}(1-\kappa^{-\sigma})}{1-\kappa^{\hat\alpha-\sigma}}
\le CN_0 \kappa^{\hat{\alpha}},
 \end{align*}
where $C$ depends only on $d,\lambda,\Lambda$, and $\sigma_0$. Here we used the fact $\hat{\alpha}<\sigma/2$ and $\kappa\ge 2$ in the last inequality.
Therefore, the lemma is proved.
\end{proof}

Dividing $u$ by a $CN_0$, where $C$ is the constant in \eqref{eq 8.311a}, and using Lemma \ref{lem3.3}, we have that for any $\kappa\ge 2$ and $l\in \mathbb{N}\cup \{0\}$,
\begin{equation}\label{eq 8.311}
\sup_{Q_{\kappa^l}}P\le \kappa^{\hat{\alpha}l},\quad
\sup_{Q_{\kappa^l}}N\le \kappa^{\hat{\alpha}l}.
\end{equation}
We are going to prove inductively that there exists a sufficiently large $\kappa\ge 2$ and sufficiently small $\hat\alpha\in (0,1/2)$ such that
\begin{equation*}
 \sup_{Q_{\kappa^{-l}}}P\le \kappa^{-\hat{\alpha} l},\quad \sup_{Q_{\kappa^{-l}}}N\le \kappa^{-\hat{\alpha} l}\quad \text{for any}\,\, l\in \mathbb{N}.
 \end{equation*}

For a fixed $r\in (0,1)$, assume that $P$ attains its maximum in $\overline{Q_r}$ at $(t_0,x_0)$. Denote
$$
\bA:=\big\{y: \delta u(t_0,x_0,y)-\delta u(0,0,y)>0\big\}.
$$
Then
\begin{align*}
P(t_0,x_0)&=\int_\bA\big(\delta u(t_0,x_0,y)-\delta u(0,0,y)\big)\frac{2-\sigma}{|y|^{d+\sigma}}\,dy,\\
N(t_0,x_0)&=\int_{\bR^d\setminus \bA}\big(\delta u(t_0,x_0,y)-\delta u(0,0,y)\big)\frac{2-\sigma}{|y|^{d+\sigma}}\,dy.
\end{align*}
We define
\begin{equation*}
v(t,x)=\int_\bA\big(\delta u(t,x,y)-\delta u(0,0,y)\big)\frac{2-\sigma}{|y|^{d+\sigma}}\,dy.
\end{equation*}
Notice that $v\le P$, and in particular $v\le 1$ in $Q_1$. Moreover, $P(t_0,x_0)=v(t_0,x_0)$.
%\begin{equation}\label{eq 9.11}
%v(t,x)\le 1 \quad\text{in}\quad Q_1 \quad\text{and}\quad |v(t,x)|\le CR^\alpha \quad \text{in}\quad Q_R.
%\end{equation}
We denote $\mathfrak{v}=(1-v)^+$.

\begin{lemma}
    %                \label{lem3.4}
Suppose that $\hat\alpha\in (0,1/2)$ satisfying $\hat\alpha<\sigma/2$ and $\kappa\ge 2$.
Then we have
\begin{equation}\label{eq 10.1412}
\mathfrak{v}_t-\cM^-\mathfrak{v}\ge -C(\kappa^{\hat{\alpha}}-1)\quad \text{in}\,\, Q_{3/4},
\end{equation}
where $C$ is a positive constant depending only on $d$, $\lambda$, $\Lambda$, and $\sigma$, and is uniformly bounded as $\sigma\to 2$.
\end{lemma}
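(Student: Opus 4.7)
\emph{Setup.} Set $\tilde w:=1-v$ and $r:=\mathfrak{v}-\tilde w=(v-1)^+$. Since after the normalization \eqref{eq 8.311} gives $v\le P\le 1$ on $Q_1$, we have $\tilde w\ge 0$ and $r\equiv 0$ on $Q_1$. In particular $\mathfrak{v}=\tilde w$ on $Q_{3/4}$, so $\mathfrak{v}_t=\tilde w_t=-v_t$ there, and for any $(t,x)\in Q_{3/4}$ both $r(t,x)$ and $Dr(t,x)$ vanish, whence $\delta r(t,x,y)=r(t,x+y)\ge 0$ for every $y$ (in each of the three regimes $\sigma<1,=1,>1$).

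\emph{Step 1 (subsolution property for $v$).} The key claim is
\begin{equation*}
v_t-\cM^+v\le 0\quad\text{in}\quad\bR^{d+1}_0,
\end{equation*}
from which $\tilde w_t-\cM^-\tilde w\ge 0$ follows via the identities $\cM^-(-f)=-\cM^+f$ and $\cM^-(f+c)=\cM^- f$. For $\sigma>1$ I would take the compactly supported nonnegative measure $d\mu_{\epsilon,R}:=\chi_{\bA\cap(B_R\setminus B_\epsilon)}(2-\sigma)|y|^{-d-\sigma}\,dy$ and apply Condition $(iii)$ to $u_{\mu_{\epsilon,R}}$; subtracting the constant $u_{\mu_{\epsilon,R}}(0,0)$ and letting $\epsilon\downarrow 0$, $R\uparrow\infty$ converts the inequality into one for $v$, the convergence being controlled by Condition $(i)$ and the bounds established in Lemma \ref{lem3.3}. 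For $\sigma\le 1$ I would argue directly: fixing $(t,x)$ and choosing an approximate minimizer $a^\ast=a^\ast(t,x)$ of $a\mapsto L_au(t,x)$, the translation invariance and linearity of $L_{a^\ast}$ together with $u_t=\inf_aL_au$ yield $\partial_tv(t,x)\le L_{a^\ast}v(t,x)\le\cM^+v(t,x)$ after integrating against $K_0\chi_\bA$ (truncated and then passing to the limit); the case $\sigma=1$ is handled analogously using the cancellation \eqref{eq10.58}.

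\emph{Step 2 (perturbation and dyadic bound).} The standard inequality $\cM^-(\tilde w+r)\le\cM^-\tilde w+\cM^+r$ (which follows from writing $\cM^\pm$ as $\sup/\inf$ over admissible kernels), Step 1, and $\mathfrak{v}_t=\tilde w_t$ on $Q_{3/4}$ give
\begin{equation*}
\mathfrak{v}_t(t,x)-\cM^-\mathfrak{v}(t,x)\ge -\cM^+r(t,x)\quad\text{for }(t,x)\in Q_{3/4}.
\end{equation*}
Because $r\ge 0$ on $\bR^{d+1}_0$ and $r(t,x)=0=Dr(t,x)$, a direct computation yields
\begin{equation*}
\cM^+r(t,x)=\Lambda\int_{\bR^d}r(t,x+y)\frac{2-\sigma}{|y|^{d+\sigma}}\,dy.
\end{equation*}
Partition $\bR^d$ into $B_{1/4}$, on which $r(t,x+y)=0$ because $(t,x+y)\in Q_1$, and the dyadic annuli $A_l:=\{\kappa^{l-1}/4<|y|\le \kappa^l/4\}$ for $l\ge 1$. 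For $y\in A_l$ and $\kappa\ge 2$ we have $(t,x+y)\in Q_{3/4+\kappa^l/4}\subset Q_{\kappa^l}$, hence by \eqref{eq 8.311} $v(t,x+y)\le P(t,x+y)\le \kappa^{\hat\alpha l}$, which gives $r(t,x+y)\le \kappa^{\hat\alpha l}-1$. Combining this with the elementary bound $\int_{A_l}(2-\sigma)|y|^{-d-\sigma}\,dy\le C(d,\sigma)(\kappa^\sigma-1)\kappa^{-\sigma l}$ and the telescoping identity
\begin{equation*}
\sum_{l=1}^\infty(\kappa^{\hat\alpha l}-1)\kappa^{-\sigma l}=\frac{\kappa^{-\sigma}(\kappa^{\hat\alpha}-1)}{(1-\kappa^{\hat\alpha-\sigma})(1-\kappa^{-\sigma})},
\end{equation*}
valid since $\hat\alpha<\sigma/2$, gives $\cM^+r(t,x)\le C(\kappa^{\hat\alpha}-1)$ with $C$ depending only on $d,\lambda,\Lambda,\sigma$ and remaining bounded as $\sigma\to 2$. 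This and the previous display prove \eqref{eq 10.1412}.

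\emph{Main obstacle.} The most delicate ingredient is Step 1: establishing $v_t-\cM^+v\le 0$ requires a careful truncation–limit procedure that turns the pointwise hypotheses $(ii)$–$(iii)$ into an inequality for the full (non-compactly-supported) weight $K_0\chi_\bA$, and the three regimes $\sigma<1,=1,>1$—which feature different forms of $\delta u$ and, for $\sigma=1$, the cancellation condition \eqref{eq10.58}—must be treated separately.
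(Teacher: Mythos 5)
Your argument reaches the paper's conclusion by what is, at its core, the same route: (a) establish the subsolution inequality $v_t-\cM^+v\le 0$ from Condition~$(iii)$ plus an approximation, and (b) control the excess via the dyadic growth bound \eqref{eq 8.311}. The organizational difference is that you split $\mathfrak{v}=(1-v)+(v-1)^+$ and use the subadditivity $\cM^-(f+g)\le\cM^-f+\cM^+g$, whereas the paper directly manipulates $\delta\mathfrak{v}$ via the identity $\delta\mathfrak{v}(t,x,y)=(v-1)^+(t,x+y)-\delta v(t,x,y)$ (valid on $Q_1$) and the resulting pointwise bounds on $(\delta\mathfrak{v})^\pm$. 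These are two ways of writing the same algebra. Your Step~2 (the annular decomposition and the series $\sum_l(\kappa^{\hat\alpha l}-1)\kappa^{-\sigma l}$) closely mirrors \eqref{eq 12.054}, with immaterial cosmetic differences (which annuli, and a slightly sharper prefactor $\Lambda$ in place of $\Lambda+\lambda$). Your truncated-measure approach to Step~1 for $\sigma>1$ is the right one, and the tail/near-origin convergences are controlled exactly by the estimates of Lemma \ref{lem3.3}.

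There is, however, a genuine flaw in your proposed handling of Step~1 for $\sigma\le 1$. You appeal to ``$u_t=\inf_a L_a u$ and the translation invariance and linearity of $L_{a^\ast}$.'' But that equation is not a hypothesis of Theorem \ref{thm 10.251}: the Liouville theorem (and hence this lemma) is formulated purely in terms of the abstract Conditions $(i)$--$(iii)$ on $u$, with no reference to a specific equation. Condition $(iii)$ is only stated for $\sigma>1$, so the subsolution property of $v$ for $\sigma<1$ would require an analogous assumption (implicitly assumed by the paper, which only spells out the $\sigma>1$ case and says $\sigma<1$ is ``similar''), not the equation itself; and for $\sigma=1$ this lemma is not even used --- the paper's Liouville proof for $\sigma=1$ proceeds via a completely different route using Condition $(ii)$ and Proposition \ref{thm 10.131} applied to $Du$ and $u_t$. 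Also note that for $\sigma=1$ your displayed formula for $\cM^+r$ is only a one-sided inequality (since $\cM^+$ there is a $\sup$ over kernels satisfying \eqref{eq10.58}, not the explicit Pucci integral), but the inequality is all you need, so that part is harmless.
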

\begin{proof}
Since $v\le 1$ in $Q_1$, for any $(t,x)\in Q_1$ we have
$\mathfrak{v}(t,x)=1-v(t,x)$, thus
\begin{align*}
&\delta \mathfrak{v}(t,x,y)=\mathfrak{v}(t,x+y)-\mathfrak{v}(t,x)-D\mathfrak{v}(t,x)y\\
&=(1-v)^+(t,x+y)-(1-v)(t,x)+Dv(t,x)y\\
%&=(1-v)^+(t,x+y)-(1-v)(t,x+y)-\delta v(t,x,y)\\
&=(v-1)^+(t,x+y)-\delta v(t,x,y).
\end{align*}
Therefore, we have
\begin{align*}
\big(\delta \mathfrak{v}(t,x,y)\big)^-\ge \big(\delta v(t,x,y)\big)^+-(v-1)^+(t,x+y),\\
\big(\delta \mathfrak{v}(t,x,y)\big)^+\le \big(\delta v(t,x,y)\big)^-+(v-1)^+(t,x+y).
\end{align*}
These imply
\begin{align}\nonumber
&\mathfrak{v}_t-\cM^-\mathfrak{v}=-v_t-(2-\sigma)\int_{\bR^d}\frac{\lambda (\delta \mathfrak{v}(t,x,y))^+-\Lambda(\delta \mathfrak{v}(t,x,y))^-}{|y|^{d+\sigma}}\,dy\\
%&\ge -v_t+(2-\sigma)\int_{\bR^d}\frac{\Lambda (\delta v(t,x,y))^+-\lambda(\delta v(t,x,y))^--(\Lambda+\lambda)(1-v)^-(t,x+y)}{|y|^{d+\sigma}}\,dy\\
&\ge -v_t+\cM^+v-(2-\sigma)(\Lambda+\lambda)\int_{\bR^d}\frac{(v-1)^+(t,x+y)}
{|y|^{d+\sigma}}\,dy.\label{eq 12.051}
\end{align}
From Condition $(iii)$ and an approximation (see, for instance, the proof of Lemma \ref{lem4.4} below), $v$ satisfies
\begin{equation}\label{eq 12.052}
v_t-\cM^+v\le 0.
\end{equation}
On the other hand, we have
\begin{align*}%\label{eq 12.053}
%&(2-\sigma)(\Lambda+\lambda)\int_{\bR^d}\frac{(1-v)^-(t,x+y)}{|y|^{d+\sigma}}\,dy\\
\int_{\bR^d}\frac{(v-1)^+(t,x+y)}{|y|^{d+\sigma}}\,dy\le \int_{\bR^d}\frac{(P-1)^+(t,x+y)}{|y|^{d+\sigma}}\,dy.
\end{align*}
Since $P$ satisfies \eqref{eq 8.311}, for $(t,x)\in Q_{3/4}$, we have $P(t,x+y)\le 1$ when $y\in B_{1/4}$, and thus the right-hand side above is equal to
\begin{align*}
\sum_{i=0}^\infty\int_{B_{\kappa^{i+1}-3/4}\setminus B_{\kappa^i-3/4}}\frac{(P-1)^+(t,x+y)}{|y|^{d+\sigma}}\,dy,
\end{align*}
which by \eqref{eq 8.311} is bounded by
\begin{align}\nonumber
&\sum_{i=0}^\infty\int_{B_{\kappa^{i+1}-3/4}\setminus B_{\kappa^i-3/4}}\frac{\kappa^{(i+1)\hat{\alpha}}-1}{|y|^{d+\sigma}}\,dy\\
 &\le C \sum_{i=0}^\infty(\kappa^{(i+1)\hat{\alpha}}-1)\kappa^{-i\sigma}
=C\Big(\frac{\kappa^{\hat{\alpha}}}{1-\kappa^{\hat{\alpha}-\sigma}}
-\frac 1{1-\kappa^{-\sigma}}\Big)\le C\big(\kappa^{\hat{\alpha}}-1\big),\label{eq 12.054}
 \end{align}
where $C$ only depends on $d,\lambda,\Lambda$, and $\sigma_0$.  Here we used the fact %that $(1+x)^a\le 1+ax$ for $a,x\in (0,1)$ in the second inequality above and restricted
$\hat{\alpha}<\sigma/2$ and $\kappa\ge 2$ in the last inequality.
Combining \eqref{eq 12.052}-\eqref{eq 12.054} with \eqref{eq 12.051}, we prove the lemma.

%The first term on the right-hand side of \eqref{eq 10.152} is  estimated as follows
%\begin{align}
%C\int_{B_1}\frac{(P-1)^+(t,x+y)}{|y|^{d+\sigma}}\,dy=C\int_{B_1\setminus B_{1/4}}\frac{(P-1)^+(t,x+y)}{|y|^{d+\sigma}}\le C4^{d+\sigma}(\kappa^{\hat{\alpha}}-1).\label{eq 10.1411}
%\end{align}
%Combining \eqref{eq 10.1410} and \eqref{eq 10.1411}, we have
%\begin{align*}(2-\sigma)(\Lambda+\lambda)\int_{\bR^d}\frac{(P-1)^+(t,x+y)}{|y|^{d+\sigma}}\,dy\le C\big(\kappa^{\hat{\alpha}}-1+\hat{\alpha}\big).
%\end{align*}
%Notice that for $\hat{\alpha}>0$ and $\kappa>1$, $\kappa^{\hat{\alpha}}-1\ge (\log\kappa) \hat{\alpha}$. Therefore, the right-hand side of \eqref{eq 10.152} is less than
%$C(\kappa^{\hat{\alpha}}-1)$
%with some $C$ depending only on $d,\lambda,\Lambda$, and $\sigma$.
\end{proof}

Let $\hat{\theta}=\lambda/(4\Lambda)$ and $\gamma$ be the constant in Proposition \ref{thm 10.131}.
For any $r_1>0$, define the set
$$\cD_{r_1}=\{(t,x)\in {Q}_{r_1}: v\ge 1-\hat{\theta}\}.$$

\begin{lemma}
                    \label{lem3.5}
Suppose that $\hat\alpha\in (0,1/2)$ satisfying $\hat\alpha<\sigma/2$ and $\kappa\ge 2$. There exist some $\eta\in (0,1)$ sufficiently close to 1 and $c\in (0,1)$ sufficiently small, both depending only on $d$, $\lambda$, $\Lambda$, and $\sigma$ (and is uniform as $\sigma\to 2$), such that for $r_1= c\kappa^{-\hat{\alpha}/\gamma}$,
\begin{equation}
                                \label{eq 10.153}
|\cD_{r_1}|\le \eta|{Q}_{r_1}|.
\end{equation}
\end{lemma}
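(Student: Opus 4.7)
The plan is to exploit the identity $v(0,0) = 0$---immediate from the definition of $v$ since $\delta u(0,0,y)-\delta u(0,0,y)=0$---and hence $\mathfrak{v}(0,0)=1$. Combined with a quantitative continuity estimate for $v$ on $Q_1$, one obtains a pointwise lower bound on $\mathfrak{v}$ in a neighborhood of the origin. The weak Harnack inequality then upgrades this into the claimed measure estimate on $\cD_{r_1}$.

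First, I reproduce the calculation from the proof of Lemma \ref{lem3.3} leading to \eqref{eq 1104.2} (but with $(t',x')=(0,0)$) and use the normalization \eqref{eq 8.311} to obtain
\[
|v(t,x)| \leq C_H\bigl(|x|^\alpha + |t|^{\alpha/\sigma}\bigr) \quad \text{on } \overline{Q}_1,
\]
where the constant $C_H$ depends only on $d,\lambda,\Lambda,\sigma$ (the factor $1/(1-\alpha)$ arising from the tail integration remains bounded for $\alpha<\hat{\alpha}<1/2$). Hence on $\overline{Q}_{r_1}$ one has $\mathfrak{v}\geq 1-2C_H r_1^\alpha$. Next, I rescale, setting $\tilde{\mathfrak{v}}(t,x):=\mathfrak{v}(r_1^\sigma t, r_1 x)$ on $Q_1$. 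By \eqref{eq 10.1412}, the rescaled function satisfies
\[
\tilde{\mathfrak{v}}_t - \cM^- \tilde{\mathfrak{v}} \geq -r_1^\sigma\,C\bigl(\kappa^{\hat{\alpha}}-1\bigr) \quad \text{in } Q_1,
\]
and the crucial observation---which is precisely the motivation for the scaling $r_1=c\kappa^{-\hat{\alpha}/\gamma}$---is that since $\gamma<\sigma$,
\[
r_1^\sigma(\kappa^{\hat{\alpha}}-1) \leq c^\sigma\,\kappa^{\hat{\alpha}(1-\sigma/\gamma)} \leq c^\sigma,
\]
so the error term in the rescaled PDE is controlled uniformly in $\kappa$ and can be made arbitrarily small by choosing $c$ small.

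Now apply the weak Harnack inequality (Proposition \ref{thm 8.311}) to $\tilde{\mathfrak{v}}$: if $c$ is chosen small enough so that $2C_H r_1^\alpha \leq \frac12$, then $\tilde{\mathfrak{v}} \geq \frac12$ on the past cylinder $(-1,-2^{-\sigma})\times B_{1/4}$, yielding a lower bound on its $L^{\epsilon_1}$-integral. The weak Harnack then gives $\inf_{Q_{1/4}}\tilde{\mathfrak{v}} \geq c_0$ for some universal $c_0>0$ (after absorbing the error $c^\sigma$). Rescaling back produces $\mathfrak{v} \geq c_0$ on $Q_{r_1/4}$. Taking $c$ small enough so that $c_0\geq\hat{\theta}$ (recall $\hat{\theta}=\lambda/(4\Lambda)$ is universal) shows that $\cD_{r_1/4}=\emptyset$, and consequently
\[
|\cD_{r_1}| \leq |Q_{r_1}\setminus Q_{r_1/4}| = \bigl(1-4^{-(d+\sigma)}\bigr)|Q_{r_1}|,
\]
so the lemma holds with $\eta:=1-4^{-(d+\sigma)}$.

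The main obstacle is verifying that the final constants $c$ and $\eta$ depend only on $d,\lambda,\Lambda,\sigma$. The delicate point is the $\alpha$-dependence of $C_H$ and of the smallness threshold $c$: careful bookkeeping using $\alpha<\hat{\alpha}<1/2$ shows that $C_H$ is uniformly bounded, while the lower-bound condition $\tilde{\mathfrak{v}}\geq\tfrac12$ on the past cylinder required by the weak Harnack is a purely pointwise one, allowing the universality of $c_0$ (and hence of $\eta$) to survive despite the $\alpha$-dependence in the continuity modulus of $v$.
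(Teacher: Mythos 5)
Your proposal takes a genuinely different route from the paper's: you try to show \emph{directly} that $v$ is small near the origin (via the modulus of continuity of $v$ and then the weak Harnack applied to $\mathfrak{v}$), whereas the paper argues by contradiction, assuming $|\cD_{r_1}|>\eta|Q_{r_1}|$ and applying the local boundedness estimate (Proposition \ref{thm 8.261}) to the subsolution $w=P-v-N$ to force $w(0,0)<0$, contradicting $w(0,0)=0$. Unfortunately, the direct approach has a gap that the contradiction argument is specifically designed to avoid.

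The modulus of continuity you use, $|v(t,x)|\le C_H(|x|^\alpha+|t|^{\alpha/\sigma})$, carries the exponent $\alpha$ coming from Condition $(i)$ of Theorem \ref{thm 10.251}, and $\alpha$ may be taken arbitrarily small. To feed the weak Harnack inequality you need $\mathfrak{v}\ge \tfrac12$ on the past portion of $Q_{r_1}$, i.e.\ $2C_H r_1^\alpha\le \tfrac12$ with $r_1=c\kappa^{-\hat\alpha/\gamma}$. But
\[
r_1^\alpha = c^\alpha\,\kappa^{-\hat\alpha\alpha/\gamma}\longrightarrow 1
\qquad\text{as }\alpha\to 0
\]
for any fixed $c\in(0,1)$, $\kappa\ge 2$, $\hat\alpha>0$. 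So there is no choice of $c$ depending only on $d,\lambda,\Lambda,\sigma$ that makes $2C_H r_1^\alpha\le\tfrac12$ for all admissible $\alpha$; the needed $c$ would have to be of size $(4C_H)^{-1/\alpha}$, which degenerates. This is not a bookkeeping issue that ``survives'' the $\alpha$-dependence: the estimate itself collapses. Moreover, this circularity is fatal to the global argument, since in the proof of Theorem \ref{thm 10.251} the constant $\hat\alpha$ (and hence the admissible range of $\alpha$) is chosen \emph{after} $c$, $\eta$ and $\kappa$; letting $c$ depend on $\alpha$ would break that order.

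The paper circumvents precisely this obstruction by never bounding $v$ itself near the origin. Instead it bounds $w=P-v-N$ on the level set $\cD_{r_1}$ using the identity \eqref{eq 10.133} together with the $\gamma$-H\"older estimate \eqref{eq 10.142} for $u_t$. The exponent $\gamma$ there comes from the interior H\"older estimate (Proposition \ref{thm 10.131}) and is \emph{universal}, and the $\kappa^{\hat\alpha}$ prefactor is cancelled exactly by $r_1^\gamma=c^\gamma\kappa^{-\hat\alpha}$, yielding the $\kappa$-free smallness $Cc^\gamma$ in \eqref{eq 1.041}. Your argument has no analogous cancellation: the exponent $\alpha$ in your continuity estimate for $v$ is not matched by a compensating factor. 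Unless you can replace the $\alpha$-modulus of continuity of $v$ by a $\gamma$-modulus (which would require information you do not have about $v$ or $P$ alone, as opposed to $u_t$), the direct approach does not close.
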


\begin{proof}
By contradiction we assume that $|\cD_{r_1}|> \eta|{Q}_{r_1}|$, and consider
\begin{equation*}
w:=\int_{\bR^d\setminus \bA}\big(\delta u(t,x,y)-\delta u(0,0,y)\big)\frac{2-\sigma}{|y|^{d+\sigma}}\,dy.
\end{equation*}
By Condition $(iii)$ and an approximation argument, $w$ is a subsolution, i.e.,
\begin{equation}
w_t-\cM^+w\le 0 \quad \text{in}\quad \bR^{d+1}_0.\label{eq 1221.3}
\end{equation}
%Since  $P-N=v+w$, we have
%\begin{equation*}
%0\le P-v\le 1-(1-\hat{\theta})=\hat{\theta} \quad \text{in}\quad \cD.
%\end{equation*}
From \eqref{eq 10.131} and \eqref{eq 10.132}, we know that in $\bR_0^{d+1}$,
\begin{equation}\label{eq 10.133}
\frac{\lambda}{\Lambda}P-\frac{u_t-u_t(0,0)}{\Lambda}\le N\le\frac{\Lambda}{\lambda}P-\frac{u_t-u_t(0,0)}{\lambda},
\end{equation}
implying that in $Q_{1/2}$
\begin{equation*}
\frac{\lambda}{\Lambda}P-\frac{[u_t]_{\gamma/\sigma,\gamma;Q_{1/2}}}{\Lambda}
\big(|x|^\gamma+|t|^{\gamma/\sigma}\big)
\le N\le \frac{\Lambda}{\lambda}P+\frac{[u_t]_{\gamma/\sigma,\gamma;Q_{1/2}}}
{\lambda}\big(|x|^\gamma+|t|^{\gamma/\sigma}\big).
\end{equation*}
From \eqref{eq 10.142}, \eqref{eq 8.311}, and the above inequality,
we obtain
\begin{align*}
w&=P-v-N\le \big(1-\lambda/\Lambda\big)P-v+C\kappa^{\hat{\alpha}}
\big(|x|^\gamma+|t|^{\gamma/\sigma}\big)\\
&\le 1-\lambda/\Lambda-(1-\hat{\theta})+C\kappa^{\hat{\alpha}}
\big(|x|^\gamma+|t|^{\gamma/\sigma}\big)\\
&\le -\lambda/\Lambda+\hat{\theta}+C\kappa^{\hat{\alpha}}
\big(|x|^\gamma+|t|^{\gamma/\sigma}\big)\quad \text{in}\,\, \cD_{r_1},
\end{align*}
where $C$ only depends on $d,\lambda,\Lambda$, and $\sigma$, and is uniformly bounded as $\sigma\to 2$.
Now we choose $c$ sufficiently small  depending only on $d,\lambda,\Lambda$, and $\sigma$ (uniformly as $\sigma\to 2$), such that
\begin{align}
&-\lambda/\Lambda+\hat{\theta}+C\kappa^{\hat{\alpha}}
\big(|x|^\gamma+|t|^{\gamma/\sigma}\big)\nonumber\\
&\le -3\hat{\theta}+C\kappa^{\hat{\alpha}}
\big(c\kappa^{-\hat{\alpha}/\gamma}\big)^\gamma\le-3\hat{\theta}+Cc^\gamma \le -\hat{\theta}\quad \text{in}\,\, Q_{r_1},\label{eq 1.041}
\end{align}
which implies $w\le -\hat{\theta}$ in $\cD_{r_1}$.
Since $w$ is a subsolution \eqref{eq 1221.3}, it follows immediately that for any $\epsilon\in(0,r_1)$, $\bar{w}_\epsilon(t,x):=(w+\hat{\theta})^+(\epsilon^\sigma t,\epsilon x)$ is a subsolution as well. Moreover,
\begin{equation}\label{eq 10.146}
\Big|\{\bar{w}_{\epsilon}\le 0\}\cap Q_{r_1/\epsilon}\Big|
>\eta\Big|Q_{r_1/\epsilon}\Big|.
\end{equation}

We estimate $\bar{w}_\epsilon$ by applying Proposition \ref{thm 8.261} with $t_1=-1$, $t_2=0$, and $\Omega=\bR^d$
\begin{align}
\nonumber
&\bar{w}_\epsilon(0,0)\le C\int_{-1}^0\int_{\bR^d}\frac{\bar{w}_\epsilon}{1+|x|^{d+\sigma}}\,dx\,dt\\
&=C\int_{-1}^0\int_{B_{r_1/\epsilon}}\frac{\bar{w}_\epsilon}{1+|x|^{d+\sigma}}\,dx\,dt
+C\int_{-1}^0\int_{B_{r_1/\epsilon}^c}\frac{\bar{w}_\epsilon}{1+|x|^{d+\sigma}}\,dx\,dt.\label{eq 10.101}
\end{align}
%We set $\hat{\alpha}_1=\frac{\sigma}{2}$ and restrict $\hat{\alpha}\le \hat{\alpha}_1$.
We first consider the second term on the right-hand side of the inequality above
\begin{align}\nonumber
&\int_{-1}^0\int_{B^c_{r_1/\epsilon}}\frac{\bar{w}_\epsilon}{1+|x|^{d+\sigma}}\,dx\,dt\\
\nonumber
&\le \int_{-1}^0\int_{B^c_{r_1/\epsilon}}\frac{\hat{\theta}}{1+|x|^{d+\sigma}}\,dx\,dt
+\int_{-1}^0\int_{B^c_{r_1/\epsilon}}\frac{|w|(\epsilon^\sigma t,\epsilon x)}{1+|x|^{d+\sigma}}\,dx\,dt\\ \label{eq 10.143}
&\le C\hat{\theta}(\epsilon/r_1)^{\sigma}+\int_{-\epsilon^\sigma}^0\int_{B_{r_1}^c}\frac{|w|(t,x)}{\epsilon^{d+\sigma}+|x|^{d+\sigma}}\,dx\,dt.
\end{align}
Since $|w|\le \max\{P,N\}$, from \eqref{eq 8.311}, for any $l\ge 0$,
$$\sup_{Q_{\kappa^l}}|w|\le \kappa^{\hat{\alpha} l}.$$
Therefore,
\begin{align}\nonumber
&\int_{-\epsilon^\sigma}^0\int_{B_{r_1}^c}\frac{|w|(t,x)}{\epsilon^{d+\sigma}+|x|^{d+\sigma}}\,dx\,dt\\ \nonumber
&\le \int_{-\epsilon^{\sigma}}^0\int_{B_1\setminus B_{r_1}}\frac{|w|}{\epsilon^{d+\sigma}+|x|^{d+\sigma}}\,dx\,dt+\sum_{i=0}^\infty\int_{-\epsilon^\sigma}^0\int_{B_{\kappa^{i+1}}\setminus B_{\kappa^i}}\frac{|w|}{|x|^{d+\sigma}}\,dx\,dt\\
\label{eq 10.144}
&\le C\big((\epsilon/r_1)^\sigma+\epsilon^\sigma\kappa^{\hat{\alpha}}\big),
\end{align}
where $C$ only depends on $d,\lambda,\Lambda$, and $\sigma$, and is uniformly bounded as $\sigma\to 2$.
We combine \eqref{eq 10.143} and \eqref{eq 10.144} to obtain that
\begin{align*}
C\int_{-1}^0\int_{B^c_{r_1/\epsilon}}\frac{\bar{w}_\epsilon}{1+|x|^{d+\sigma}}\,dx\,dt\le C\big((\epsilon/r_1)^\sigma+\epsilon^\sigma \kappa^{\hat{\alpha}}\big).
\end{align*}
Recall that $r_1= c\kappa^{-\hat{\alpha}/\gamma}$. For the right-hand side of the inequality above, we want to choose $\epsilon$ sufficiently small such that
$C\epsilon^\sigma(r_1^{-\sigma}+\kappa^{\hat{\alpha}})\le \hat{\theta}/4$.
Indeed, since  $c\in (0,1)$  and $\sigma>\gamma$, we have
$r_1^{-\sigma}\ge \kappa^{\hat{\alpha}}$.
%\begin{align*}
%C\epsilon^\sigma (c^{-\sigma}\kappa^{\hat{\alpha}\sigma/\gamma}+\kappa^{\hat{\alpha}})\le  \frac{\hat{\theta}}{4}.
%\end{align*}
It is sufficient to fix $\epsilon$ such that $2C\epsilon^\sigma r_1^{-\sigma}= \hat{\theta}/4$,
i.e.,
$$
\epsilon/r_1= \big(\hat{\theta}/(8C)\big)^{1/\sigma}:=c_1,
$$
where $c_1$ only depends on $d,\lambda,\Lambda$, and $\sigma$, and is uniformly bounded as $\sigma\to 2$.
%\begin{equation*}
%\epsilon\le C_1\kappa^{-\hat{\alpha}/\gamma},\quad \text{and}\quad C_1:=(\frac{\hat{\theta}c^\sigma}{8C})^{1/\sigma}.
%\end{equation*}
In other words, by taking $\epsilon=c_1r_1$ we have
\begin{equation}\label{eq 10.148}
 C\int_{-1}^0\int_{B^c_{r_1/\epsilon}}\frac{\bar{w}_\epsilon}{1+|x|^{d+\sigma}}\,dx\,dt\le \hat{\theta}/4.
\end{equation}

Next we estimate the first term on the right-hand side of \eqref{eq 10.101} using \eqref{eq 10.146}:
\begin{align}
&C\int_{-1}^0\int_{B_{1/c_1}}\frac{\bar{w}_\epsilon}{1+|x|^{d+\sigma}}
\,dx\,dt\nonumber\\
&=C\int_{\big((-1,0)\times B_{1/c_1}\big)\cap \{\bar{w}_\epsilon>0\}}\frac{\bar{w}_\epsilon}{1+|x|^{d+\sigma}}\,dx\,dt\nonumber\\
&\le C(1-\eta)|Q_{1/c_1}|\sup_{(-1,0)\times B_{1/c_1}}\bar{w}_\epsilon\nonumber\\
&\le C(1-\eta)|Q_{1/c_1}|\sup_{(-1,0)\times B_{r_1}}(w+\hat\theta)^+
\le \hat{\theta}/4
\label{eq 10.147}
\end{align}
upon taking $\eta$ sufficiently close to $1$ depending only on $d,\lambda,\Lambda$, and $\sigma$ (uniformly as $\sigma\to 2$).

Combining  \eqref{eq 10.147} and \eqref{eq 10.148}  with \eqref{eq 10.101}, we have $\bar{w}_\epsilon(0,0)\le \hat{\theta}/2$ indicating that $w(0,0)\le -\hat{\theta}/2$, which contradicts with $w(0,0)=0$ by the definition of $w$.
Therefore, the lemma is proved.
\end{proof}

Now we are ready to prove Theorem \ref{thm 10.251}.

\begin{proof}[Proof of Theorem  \ref{thm 10.251}]
Below we first elaborate on the case when $\sigma>1$.  At the end, we briefly discuss the case when $\sigma=1$. The proof of the case when $\sigma<1$ is omitted due to similarity to the first case.

Let $\eta$ and $c$ be the constants in Lemma \ref{lem3.5} and $r_1= c\kappa^{-\hat{\alpha}/\gamma}$. From \eqref{eq 10.153},
$$
\big|\{1-v\ge \hat{\theta}\}\cap Q_{r_1}\big|
\ge (1-\eta)\big|Q_{r_1}\big|.
$$
Recall that $\tilde{Q}_{\delta r_1}=(-{r_1}^\sigma,-(\delta {r_1})^\sigma)\times B_{r_1}$. For $\delta$ sufficiently small depending on $\eta$, we have
$$
\big|\{1-v\ge\hat{\theta}\}\cap \tilde{Q}_{\delta r_1}\big|
\ge \frac{1-\eta}{2} \big|\tilde{Q}_{\delta r_1}\big|.
$$
We set $r=\delta r_1/2$ and apply the weak Harnack inequality Corollary \ref{weak_harnack} to $(1-v)^+$ in $Q_{3/4}$ with \eqref{eq 10.1412} to obtain
\begin{align}
&\inf_{Q_{r}}(1-v)+C(\kappa^{\hat\alpha}-1)r_1^{\sigma}\nonumber\\
&\ge C(\delta)\|(1-v)^+\|_{L_{\epsilon_1}(\tilde{Q}_{\delta r_1})}r_1^{-(d+\sigma)/\epsilon_1}\nonumber\\
&\ge C(\delta)\hat{\theta}\big((1-\eta)|\tilde{Q}_{\delta r_1}|/2\big)^{1/\epsilon_1}r_1^{-(d+\sigma)/\epsilon_1}\nonumber\\
&=C(\delta)\hat{\theta}
\big((1-\eta)(1-\delta^\sigma)/2\big)^{1/\epsilon_1}=:2\theta,    \label{eq4.40}
\end{align}
where $\theta$ is a small constant depending only on $d$, $\lambda$, $\Lambda$, and $\sigma$, and is uniformly bounded as $\sigma\to 2$.
We fix $\kappa=4/(c\delta)^2$, where $c$ is chosen according to \eqref{eq 1.041},  which guarantees that
\begin{equation*}%\label{eq 10.149}
\kappa^{-1}\le c\delta/2\cdot\kappa^{-\hat{\alpha}/\gamma}
\end{equation*}
for any $\hat{\alpha}\in (0,\gamma/2)$.
Since $r = \delta r_1/2$ and $r_1 = c\kappa^{-\hat{\alpha}/\gamma}$, the inequality above  can be written as $\kappa^{-1}<r$.
%which further
%implying $$\inf_{Q_{\kappa^{-1}}}(1-v)\ge \inf_{Q_r}(1-v).$$

Next, we choose $\hat{\alpha}_1=\log(1+\theta/C)/\log\kappa$ such that for any $\hat{\alpha}\in (0, \hat{\alpha}_1)$,
\begin{equation*}
C(\kappa^{\hat\alpha}-1)r_1^\sigma
<C(\kappa^{\hat\alpha}-1)<\theta.%\label{eq 1.143}
\end{equation*}
%which, from \eqref{eq 10.1412}, can be written as
%\begin{equation*}
%C(\kappa^{\hat{\alpha}}-1)r_1^\sigma<C(\delta)\hat{\theta}(\frac{1-\eta}{2})^{1/\epsilon_1}(1-\delta)^{1/\epsilon_1}\\
%\Rightarrow
%\kappa^{\hat{\alpha}}-1<\frac{C(\delta)\hat{\theta}}{2C}
%\big((1-\eta)(1-\delta)/2\big)^{1/\epsilon_1}r_1^{-\sigma}.
%\end{equation*}
%We denote the coefficient in front of $r_1^{-\sigma}$ to be $C_1$, which is a constant independent of $\kappa$ and $\hat{\alpha}$. Hence, it is sufficient to pick $\hat{\alpha}$ so that
%\begin{equation*}
%\kappa^{\hat{\alpha}}-1<C_1{\color{red} (c\kappa^{-\hat{\alpha}/\gamma})^{-\sigma}},
%\end{equation*}
%i.e.,
%\begin{equation*}
%\frac{(\kappa^{\hat{\alpha}}-1)}{\kappa^{\frac{\hat{\alpha}\sigma}{\gamma}}}<c.
%\end{equation*}
%If we set $\kappa^{\hat{\alpha}}=\zeta$, it is equivalent to find $\zeta$ such that
%$$\frac{\zeta-1}{\zeta^{\sigma/\gamma}}<c$$
%for $\zeta>1$.
%so if we take $\hat{\alpha}_1<\log(1+{\color{blue}C_1})/\log\kappa$, then for any $\hat{\alpha}\le \hat{\alpha}_1$, \eqref{eq 1.143} holds.
Therefore, from \eqref{eq4.40} and the inequality above we obtain that $\sup_{Q_{r}}v\le 1-\theta$.
%where
%$$\theta=\frac{C(\delta)\hat{\theta}}{2}
%\big((1-\eta)|\tilde{Q}_{\delta r_1}|/2\big)^{1/\epsilon}r_1^{-(\sigma+d)/\epsilon}.$$
Since $\kappa^{-1}<r$,
\begin{equation}\label{eq 10.121}
\sup_{Q_{\kappa^{-1}}}P\le \sup_{Q_{r}}P
=P(t_0,x_0)=v(t_0,x_0)=\sup_{Q_{r}}v\le
1-\theta.
\end{equation}
Similarly,
\begin{equation}\label{eq 10.121b}
\sup_{Q_{\kappa^{-1}}}N \le
1-\theta.
\end{equation}

Set
$$
\hat{\alpha}=\min\big\{-\log(1-\theta)/\log \kappa,\hat{\alpha}_1,\gamma/2\big\}.
$$
Then \eqref{eq 10.121} and \eqref{eq 10.121b} imply
\begin{equation}
                                \label{eq9.04}
\sup_{Q_{\kappa^{-1}}}\kappa^{\hat{\alpha}}P\le 1,\quad
\sup_{Q_{\kappa^{-1}}}\kappa^{\hat{\alpha}}N\le 1.
\end{equation}
Let
$$
\tilde{P}(t,x)=\kappa^{\hat{\alpha}}P(\kappa^{-1}x,\kappa^{-\sigma}t),
\quad \tilde{N}(t,x)=\kappa^{\hat{\alpha}}N(\kappa^{-1}x,\kappa^{-\sigma}t),
$$
and
$$
\tilde u(t,x)=\kappa^{\sigma+\hat{\alpha}}u(\kappa^{-1}x,\kappa^{-\sigma}t).
$$
From \eqref{eq 10.133}, we have
\begin{equation*}
\frac{\lambda}{\Lambda}\tilde{P}-
\frac{\tilde u_t-\tilde u_t(0,0))}{\Lambda}
\le \tilde{N}
\le\frac{\Lambda}{\lambda}\tilde{P}-
\frac{\tilde u_t-\tilde u_t(0,0)}{\lambda}.
\end{equation*}
Since  $\kappa\ge 2$ and $\hat{\alpha}\le\gamma$, we get
\begin{align*}
[\tilde u_t]_{\gamma/\sigma,\gamma;Q_{1/2}}\le
\kappa^{\hat\alpha-\gamma}[u_t]_{\gamma/\sigma,\gamma;Q_{1/2}}\le
[u_t]_{\gamma/\sigma,\gamma;Q_{1/2}}.
\end{align*}
On the other hand, for any $l\ge 0$,
\begin{equation*}
\sup_{Q_{\kappa^l}}\tilde{P}=\kappa^{\hat{\alpha}}\sup_{Q_{\kappa^{l-1}}}P\le \kappa^{\hat{\alpha}}\kappa^{(l-1)\hat{\alpha}}=\kappa^{l\hat{\alpha}},
\quad \sup_{Q_{\kappa^l}}\tilde{N}\le \kappa^{l\hat{\alpha}}.
\end{equation*}
Therefore, $\tilde{P}$ and $\tilde N$ satisfy all the conditions that $P$ and $N$ satisfied.
Applying \eqref{eq9.04} to $\tilde{P}$ and $\tilde N$, we have
$$
\sup_{Q_{\kappa^{-1}}}\tilde{P}\le \kappa^{-\hat{\alpha}},\quad
\sup_{Q_{\kappa^{-1}}}\tilde{N}\le \kappa^{-\hat{\alpha}},
$$
which further implies that
\begin{equation*}
\sup_{Q_{\kappa^{-2}}}P\le \kappa^{-2\hat{\alpha}},\quad
\sup_{Q_{\kappa^{-2}}}N\le \kappa^{-2\hat{\alpha}}.
\end{equation*}
By induction, for any $l\in \mathbb{N}$,
\begin{equation*}
\sup_{Q_{\kappa^{-l}}}P\le \kappa^{-l\hat{\alpha}},\quad
\sup_{Q_{\kappa^{-l}}}N\le \kappa^{-l\hat{\alpha}}.
\end{equation*}
Therefore, we have in $Q_1$
\begin{equation*}
P(t,x)\le C\big(|x|^{\hat{\alpha}}+|t|^{\hat{\alpha}/\sigma}\big),\quad
N(t,x)\le C\big(|x|^{\hat{\alpha}}+|t|^{\hat{\alpha}/\sigma}\big).
\end{equation*}

%Combining \eqref{eq 10.1413} and \eqref{eq 10.1414}, we have for $(t,x)\in Q_{1/2}$
%\begin{equation*}
%|(-\Delta)^{\sigma/2}(u(t,x)-u(0,0))|\le C(|x|^{\hat{\alpha}}+|t|^{\hat{\alpha}}).
%\end{equation*}
Since for any $\eta\ge 1$, $\hat{u}(t,x)=\eta^{-\sigma-\alpha}u(\eta^{\sigma}t,\eta x)$ satisfies the same condition as $u$,  replacing $u$ by $\hat{u}$ in the definition of $P$ and denoting it as $P_{\hat{u}}$, we obtain
\begin{equation*}
P_{\hat{u}}(t,x)\le C\big(|x|^{\hat{\alpha}}+|t|^{\hat{\alpha}/\sigma}\big)\quad \text{in}\,\, Q_1.
\end{equation*}
Returning to  $P$, we have
\begin{equation*}
\eta^{-\alpha}P(\eta^\sigma t,\eta x)\le C\big(|x|^{\hat{\alpha}}+|t|^{\hat{\alpha}/\sigma}\big)
\end{equation*}
in $Q_1$, which further implies that
\begin{equation*}
\sup_{Q_\eta}\frac{P(t,x)}{|x|^{\hat{\alpha}}+|t|^{\hat{\alpha}/\sigma}}\le C\eta^{\alpha-\hat{\alpha}}.
\end{equation*}
Let $\eta\to \infty$ yields
\begin{equation*}
\sup_{(t,x)\in \bR^{d+1}_0}\frac{P(t,x)}{|x|^{\hat{\alpha}}+|t|^{\hat{\alpha}/\sigma}}=0,
\end{equation*}
which gives $P=0$. Similarly, $N=0$.

From the definition of $P$ and $N$, we have
$$u(t,x+y)-u(t,x)-Du(t,x)y=u(0,y)-u(0,0)-Du(0,0)y.$$
Taking derivative in $y$, we have, for any $t\in (-\infty,0)$ and $x,y\in \bR^d$
\begin{equation*}
Du(t,x+y)-Du(t,x)=Du(0,y)-Du(0,0),
\end{equation*}
which implies for fixed $t$, $u$ is a polynomial in $x$ of order at most two. Using Condition $(i)$ with $\beta=0$, we infer that this order is at most $\nu$.  Condition $(ii)$, together with  $P=N=0$, yields $u_t=c$ for some constant $c$. The proof is completed for $\sigma>1$.

Finally, we sketch the proof for $\sigma=1$. From Condition $(ii)$,  we know that $u(\cdot+s,\cdot+h)-u$, and thus $Du$ and $u_t$ are both sub and supersolutions, and are in $C^{\gamma/\sigma,\gamma}(Q_{1/2})$. %By combining the fact that $u_t\in C^{\gamma/\sigma,\gamma}(Q_{1/2})$, it is sufficient to consider the regularity in $x$ variable. In other words, we set $s=0$ and take the finite quotient.
By Proposition \ref{thm 10.131}, we have
\begin{equation*}
[u_t]_{\gamma/\sigma,\gamma;Q_{1/2}}+[Du]_{\gamma/\sigma,\gamma;Q_{1/2}}\le C\sup_{t\in[0,1]}\int_{\bR^d}\frac{|Du|+|u_t|}{1+|x|^{d+1}}\,dx.
\end{equation*}
From Condition $(i)$, the right-hand side of the inequality above is less than
\begin{equation*}
\int_{B_1}\frac{1}{1+|x|^{1+d}}\,dx+\sum_{j=1}^\infty\int_{B_{2^j}\setminus B_{2^{j-1}}}\frac{C2^{j\alpha}}{1+|x|^{1+d}}\,dx\le \frac{C}{1-2^{\alpha-1}}
\end{equation*}
for any $\alpha\in (0,1)$. By taking $\hat{\alpha}\le \gamma$ and scaling as before, we can prove that
\begin{equation*}
[u]_{1+\gamma,1+\gamma;Q_R}\le CR^{\alpha-\gamma},
\end{equation*}
i.e., $u$ must be a linear function by sending $R\to \infty$. The theorem is proved.
\end{proof}
%We use the following equivalent norm of H\"older norm
%\begin{equation}\label{eq 10.201}
%[u]^*_{1+\alpha/\sigma,\alpha+\sigma,Q_{1/2}}:=\sup_{r>0,X\in Q_{1/2}}r^{\alpha^\prime-\alpha}[u]_{1+\alpha^\prime/\sigma,\alpha^{\prime}+\sigma,Q_r(X)}
%\end{equation}
%where $\alpha^\prime<\alpha$. The proof can be found in  Claim 3.2 \cite{JS14}.

\section{Schauder estimate for nonlocal parabolic equations}
In this section, we prove Theorem \ref{thm 1} by applying the Liouville theorem, a blow-up analysis, and a localization procedure.
 %Define $\mathcal{P}$ to be the set of polynomials of $t,x$ of degree 1 in $t$ and at most $\nu$ in $x$.
 %Set
 %\begin{align}
 %\nonumber
 %E[u, Q_{r}(X)]=\inf_{p\in \mathcal{P}}\|u-p\|_{L_\infty(Q_r(X))},\\
 %\quad [u]^\prime_{1+\alpha/\sigma,\alpha+\sigma; \bR^{d+1}_0}=\sup_{r>0,X\in \bR^{d+1}_0}r^{-\sigma-\alpha}E[u,Q_r(X)].\label{eq 11.061}
 %\end{align}
 %It is not hard to prove that $[\cdot]^\prime_{1+\alpha/\sigma.\alpha+\sigma;\bR^{d+1}_0}$ is equivalent to the classical H\"older norm, for instance, see \cite[Theorem 3.3.1]{Kry97}.
In the rest of the paper, we do not specify the domain associated with the norm when it is $\bR^{d+1}_0=(-\infty,0)\times \bR^d$.

\subsection{Equations with translation invariant kernels}
In this subsection, we consider equations with translation invariant kernels, i.e., $K=K(y)$. The main result of the subsection is the following theorem.

\begin{theorem}\label{thm 1104.1}
Let $\sigma\in(0,2)$ be a constant and $\cA$ be an index set. There exists a constant $\hat{\alpha}>0$ depending on $d$, $\lambda$, $\Lambda$, and $\sigma$ (uniformly as $\sigma\to 2$), such that given $0<\alpha'<\alpha<\hat{\alpha}$ satisfying $[\sigma+\alpha]<\sigma+\alpha'<\sigma+\alpha$ the following holds. Let $u\in C^{1+\alpha'/\sigma,\alpha'+\sigma}\big((-1,0)\times\bR^d\big)\cap C^{1+\alpha/\sigma,\alpha+\sigma}(Q_1)$ satisfy
\begin{equation*}
 u_t=\inf_{a\in \mathcal{A}}\big({L_a u+f_a}\big)\quad \text{in}\,\, Q_1,
 \end{equation*}
 where $L_a\in \mathcal{L}_{0}(\sigma,\lambda,\Lambda)$ with $K_a=K_a(y)$ for any $a\in \mathcal{A}$. Assume that
$$
\sup_{(t,x)\in Q_1}\Big|\inf_{a\in \mathcal{A}}f_a(t,x)\Big|<\infty.
$$
Then
\begin{equation*}
 [u]_{1+\alpha/\sigma,\alpha+\sigma;Q_{1/2}}\le C[u]_{1+\alpha'/\sigma,\alpha'+\sigma;(-1,0)\times\bR^d}+C\sup_{a\in \mathcal{A}}[f_a]_{\alpha/\sigma,\alpha;Q_1},
\end{equation*}
where $C$ only depends on $d,\lambda,\Lambda,\sigma,\alpha$, and $\alpha'$, and is uniformly bounded as $\sigma\to 2$.
\end{theorem}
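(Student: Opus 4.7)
My plan is to proceed by contradiction and a blow-up analysis, reducing the theorem to the Liouville Theorem~\ref{thm 10.251}. Let $\nu := [\sigma+\alpha]$ and let $\mathcal{P}$ denote the finite-dimensional space of polynomials of degree at most $\nu$ in $x$ and at most $1$ in $t$. Assume the estimate fails; then one can produce sequences $u_k$ solving $(u_k)_t = \inf_{a\in\cA}(L^k_a u_k + f^k_a)$ in $Q_1$ with translation-invariant $L^k_a \in \mathcal{L}_0$, satisfying
\[
[u_k]_{1+\alpha'/\sigma,\alpha'+\sigma;(-1,0)\times\bR^d} + \sup_a[f^k_a]_{\alpha/\sigma,\alpha;Q_1}\le 1, \qquad [u_k]_{1+\alpha/\sigma,\alpha+\sigma;Q_{1/2}}\to \infty.
\]
I would measure the failure of polynomiality via
\[
\Theta_k(\rho) := \sup_{(t_0,x_0)\in Q_{1/2}}\sup_{r\ge \rho} r^{-(\sigma+\alpha)}\inf_{p\in\mathcal{P}}\|u_k-p\|_{L^\infty(Q_r(t_0,x_0)\cap Q_1)},
\]
which is nonincreasing in $\rho$, finite for each $\rho > 0$, and forced by the contradiction hypothesis to satisfy $\Theta_k(0^+)\to\infty$. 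Choosing $\rho_k\to 0$, near-maximizers $(t_k,x_k)\in Q_{1/2}$, $r_k\ge\rho_k$, and $p_k\in\mathcal{P}$, I would then define the blow-up
\[
v_k(t,x) := \frac{(u_k - p_k)(t_k + r_k^\sigma t,\, x_k + r_k x)}{\Theta_k(\rho_k)\, r_k^{\sigma+\alpha}}.
\]

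By construction $\inf_{p\in\mathcal{P}}\|v_k - p\|_{L^\infty(Q_1)}\ge c > 0$, while the quasi-maximality of $\Theta_k(\rho_k)$ supplies the polynomial-excess growth $\inf_{p\in\mathcal{P}}\|v_k-p\|_{L^\infty(Q_R)}\le R^{\sigma+\alpha}$ for all admissible $R\ge 1$. Because $\mathcal{L}_0$ is scale invariant, $v_k$ solves an equation of the same form with source of size $O(r_k^\alpha/\Theta_k(\rho_k))\to 0$. The assumed $C^{1+\alpha'/\sigma,\alpha'+\sigma}$ bound on $u_k$ rescales, using $\alpha' < \alpha$, into a locally uniform $C^{1+\alpha'/\sigma,\alpha'+\sigma}$ bound on $v_k$ that degenerates globally but is enough on compacta to extract a subsequence converging locally in $C^{1+\alpha'/\sigma,\alpha'+\sigma}$ to some $v\colon \bR^{d+1}_0\to\bR$. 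I then need to verify the hypotheses of Theorem~\ref{thm 10.251} for $v$: (i) the growth control \eqref{eq1.35} follows from the excess estimate together with interpolation against the $C^{1+\alpha'/\sigma,\alpha'+\sigma}$ bound; (ii) the translation invariance and concavity of the operator make $v(\cdot+s,\cdot+h)-v$ a simultaneous subsolution of $\partial_t - \cM^-$ and supersolution of $\partial_t - \cM^+$ in the limit; and (iii) for $\sigma > 1$ the quantity $v_\mu := \int \delta v(\cdot,\cdot,h)\,d\mu(h)$ is a subsolution of $\partial_t - \cM^+$ for every nonnegative compactly supported measure $\mu$. Once these are established, Theorem~\ref{thm 10.251} forces $v\in\mathcal{P}$, contradicting the lower bound on $\inf_{p\in\mathcal{P}}\|v-p\|_{L^\infty(Q_1)}$.

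\textbf{Main obstacle.} The hardest step will be hypothesis (iii) in the non-symmetric setting for $\sigma > 1$. Serra's argument for symmetric kernels exploits that centered second differences $u(t,x+h)+u(t,x-h)-2u(t,x)$ are subsolutions by concavity of $\inf_a L_a$, but this symmetric combination does not interact naturally with the non-symmetric integrand $\delta u$. I would instead approximate $v_\mu$ by the signed combination of translates
\[
W_\delta(t,x) := \int \Big[\,v(t,x+h) - v(t,x) - \delta^{-1}\big(v(t,x) - v(t,x-\delta h)\big)\Big]\,d\mu(h),
\]
verify that $W_\delta$ is a subsolution of $\partial_t - \cM^+$ for each $\delta>0$ by evaluating $\sup_a L_a W_\delta$ at the minimizer $a^\ast$ realizing $\inf_a L_a v(t,x) = v_t(t,x)$ and using $L_{a^\ast}v \ge \inf_a L_a v = v_t$ at the translated points, and then pass to the limit $\delta\to 0$ to transfer the subsolution property to $v_\mu$ via stability. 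The case $\sigma = 1$ requires an analogous discrete approximation compatible with the cancellation \eqref{eq10.58}, while $\sigma < 1$ is simpler because no first-order correction is present and (iii) is not invoked.
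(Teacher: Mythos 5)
Your plan is structurally the same as the paper's proof: argue by contradiction, blow up, and reduce to the Liouville Theorem~\ref{thm 10.251}, with the hardest point being the verification of Condition~(iii) via the one-sided discrete approximant $W_\delta$ and concavity. You have correctly identified both the architecture and the genuine obstacle (the non-symmetric version of Serra's second-difference trick), and your proposed resolution via $W_\delta$ is exactly the device used in the paper's Lemma~\ref{lem4.4}.

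The one place where you genuinely deviate is the choice of normalizing quantity. The paper maximizes a dyadically weighted top-order seminorm over shrinking cylinders $Q^k$, namely $\sup_k 2^{-k(\alpha-\alpha')}[u]_{1+\alpha/\sigma,\alpha+\sigma;Q^k}$, selects the maximizing $k_j$, then (Lemma~\ref{lem4.2}) selects near-maximizing $(t_j,x_j)$ and $r_j$ for the equivalent pointwise local $C^{\sigma+\alpha'}$ oscillation, subtracts the Taylor polynomial $T_j$ of $u_j$ at $(t_j,x_j)$, and derives the contradiction from $[v_j]_{1+\alpha'/\sigma,\alpha'+\sigma;Q_1}\ge 1/2$ together with the fact that $v$'s derivatives of the relevant orders vanish at the origin. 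You instead maximize a Campanato excess $\Theta_k(\rho)$ measuring $L^\infty$ distance to $\mathcal P$, subtract a near-optimal approximating polynomial rather than a Taylor polynomial, and contradict directly via $\inf_{p\in\mathcal P}\|v-p\|_{L^\infty(Q_1)}\ge c$. These are morally equivalent, and both succeed, but they buy slightly different things: the Campanato formulation makes the nondegeneracy of $v$ immediate and eliminates the need to track vanishing derivatives, whereas the paper's $Q^k$-weighted seminorm is tailored to the hypothesis that $u\in C^{1+\alpha/\sigma,\alpha+\sigma}(Q_1)$ only (so that seminorm may blow up as $k\to\infty$) and makes the derivation of Lemma~\ref{lem4.3}'s two growth estimates, the one on $[0,\sigma+\alpha']$ and the endpoint one on $[0,\sigma+\alpha]$, especially transparent. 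Your plan would need the analogous care: the Campanato bound alone gives the growth rate $[v]_{\beta/\sigma,\beta;Q_R}\lesssim R^{\sigma+\alpha-\beta}$ for $\beta<\sigma+\alpha$, and the endpoint needs the rescaled global $C^{1+\alpha'/\sigma,\alpha'+\sigma}$ bound plus interpolation, which you do mention (and which suffices in view of Remark~3.2). A minor inaccuracy: Condition~(iii) is only invoked for $\sigma>1$; the case $\sigma=1$ runs entirely off Condition~(ii), so no discrete approximation of type (iii) is needed there.
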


We denote
\begin{equation}
			\label{eq 3.11}
Q^k=\big(-1+2^{-(k+1)\sigma}/(1-2^{-\sigma}),0\big)\times B_{1-2^{-k}}(0)
\end{equation}
for all sufficiently large integers $k$ such that $2^{-(k+1)\sigma}<1-2^{-\sigma}$. We shall prove a stronger result:
\begin{align}\label{eq 1.151}
&\sup_k2^{-k(\alpha-\alpha')}[u]_{1+\alpha/\sigma,\alpha+\sigma;Q^k}\nonumber\\
&\le C[u]_{1+\alpha'/\sigma,\alpha'+\sigma;(-1,0)\times\bR^d}
+C\sup_{a\in \mathcal{A}}[f_a]_{\alpha/\sigma,\alpha;Q_1}.
\end{align}
The conclusion of the theorem is a particular case for $k$  large only depending on $\sigma$ (uniformly as $\sigma\to 2$) so that $Q_{1/2}\subset Q^k$.  Since we assume that $u\in C^{1+\alpha/\sigma,\alpha+\sigma}(Q_1)$, there exists an integer $k$ such that
 \begin{equation*}
2^{-k(\alpha-\alpha')}
[u]_{1+\alpha/\sigma,\alpha+\sigma;Q^k}
=\sup_l2^{-l(\alpha-\alpha')}[u]_{1+\alpha/\sigma,\alpha+\sigma;Q^l}.
\end{equation*}

Next, we prove \eqref{eq 1.151} by contradiction.
Assume that we can find solutions $u_j$ and index sets $\mathcal{A}_j$ such that
\begin{align}
&\partial_tu_j=\inf_{a\in \mathcal{A}_j}(L_au_j+f_a)\quad \text{in}\quad Q_1,\quad \sup_{(t,x)\in Q_1}\Big|\inf_{a\in \mathcal{A}_j}f_a(t,x)\Big|<\infty,\nonumber\\
&[u_j]_{1+\alpha^\prime/\sigma,\sigma+\alpha^\prime;(-1,0)\times\bR^d}+\sup_{a\in \mathcal{A}_j}[f_a]_{\alpha/\sigma,\alpha;Q_1}\le 1,\nonumber\\
                            \label{eq2.26}
&\text{and}\quad \sup_k2^{-k(\alpha-\alpha')}[u_j]_{1+\alpha/\sigma,\sigma+\alpha;Q^k}\ge j,
%\to \infty\quad \text{as}\quad j\to \infty,
\end{align}
where for any $a\in \mathcal{A}_k$, $L_a\in \mathcal{L}_0$ with $K_a=K_a(y)$. As explained above, for each $j$ there exists an integer $k_j$ so that
\begin{equation*}
2^{-k_j(\alpha-\alpha')}[u_j]_{1+\alpha/\sigma,\alpha+\sigma;Q^{k_j}}=\sup_k2^{-k(\alpha-\alpha')}
[u_j]_{1+\alpha/\sigma,\alpha+\sigma;Q^k}.
\end{equation*}

\begin{lemma}
                                \label{lem4.2}
For any $j\ge 1$, we have
\begin{align}
[u_j]_{1+\alpha/\sigma,\alpha+\sigma;Q^{k_j}}&\le \sup_{r>0}\sup_{(t,x)\in Q^{k_j}}r^{-(\alpha-\alpha')}
[u_j]_{1+\alpha'/\sigma,\alpha'+\sigma;Q_r(t,x)\cap \{t>-1\}}\nonumber\\
                                \label{eq10.16}
&\le 4^{\alpha-\alpha'}[u_j]_{1+\alpha/\sigma,\alpha+\sigma;Q^{k_j}}.
\end{align}
Moreover, we can find $(t_j,x_j)\in Q^{k_j}$ and $r_j$ such that
\begin{equation}
\frac{1}{2}[u_j]_{1+\alpha/\sigma,\alpha+\sigma;Q^{k_j}}\le r_j^{-(\alpha-\alpha')}[u_j]_{1+\alpha'/\sigma,\alpha'+\sigma;Q_{r_j}(t_j,x_j)\cap \{t>-1\}}
\label{eq 1.161}
\end{equation}
and
\begin{equation}
2^{k_j}r_j\to 0\quad \text{as}\,\,j\to \infty.
                                \label{eq 1.162}
\end{equation}
\end{lemma}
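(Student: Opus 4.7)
The plan is to extract \eqref{eq10.16}, \eqref{eq 1.161}, and \eqref{eq 1.162} directly from the defining property of $k_j$ (maximality of $2^{-k(\alpha-\alpha')}[u_j]_{1+\alpha/\sigma,\alpha+\sigma;Q^k}$) together with the normalization in \eqref{eq2.26}. For the first inequality in \eqref{eq10.16}, I will pick, for each $\varepsilon>0$, a pair of points $(t_1,x_1),(t_2,x_2)\in Q^{k_j}$ whose difference quotient approximates $[u_j]_{1+\alpha/\sigma,\alpha+\sigma;Q^{k_j}}$ to within $\varepsilon$, and set $r=\max(|x_1-x_2|,|t_1-t_2|^{1/\sigma})$ together with $(t,x)=(\max(t_1,t_2),x_2)$ (or any point whose time coordinate is at least $\max(t_1,t_2)$ and whose ball of radius $r$ contains both $x_1,x_2$). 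Both points then lie in $Q_r(t,x)\cap\{t'>-1\}$, and the $\alpha$-quotient equals $r^{-(\alpha-\alpha')}$ times the corresponding $\alpha'$-quotient for the same pair, which is bounded by $r^{-(\alpha-\alpha')}[u_j]_{1+\alpha'/\sigma,\alpha'+\sigma;Q_r(t,x)\cap\{t'>-1\}}$.

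For the second inequality in \eqref{eq10.16} I will split on the size of $r$. The explicit formula \eqref{eq 3.11} shows that for $(t,x)\in Q^{k_j}$ and $r\le 2^{-k_j-1}$, both the spatial slack $2^{-k_j}-r\ge 2^{-k_j-1}$ and the temporal slack are controlled, giving the inclusion $Q_r(t,x)\subset Q^{k_j+1}$. For any pair in $Q_r(t,x)$ the parabolic distance is at most $2r$, so
\begin{equation*}
[u_j]_{1+\alpha'/\sigma,\alpha'+\sigma;Q_r(t,x)\cap\{t'>-1\}}\le (2r)^{\alpha-\alpha'}[u_j]_{1+\alpha/\sigma,\alpha+\sigma;Q^{k_j+1}}.
\end{equation*}
Maximality of $k_j$ applied at $k=k_j+1$ yields $[u_j]_{1+\alpha/\sigma,\alpha+\sigma;Q^{k_j+1}}\le 2^{\alpha-\alpha'}[u_j]_{1+\alpha/\sigma,\alpha+\sigma;Q^{k_j}}$, and multiplying by $r^{-(\alpha-\alpha')}$ produces the claimed $4^{\alpha-\alpha'}$. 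For $r>2^{-k_j-1}$, localization inside a $Q^k$ is not directly available, and this is the one delicate point: I will instead use the global bound $[u_j]_{1+\alpha'/\sigma,\alpha'+\sigma;(-1,0)\times\bR^d}\le 1$ from \eqref{eq2.26} together with $r^{-(\alpha-\alpha')}<2^{(k_j+1)(\alpha-\alpha')}$, and then absorb the factor $2^{k_j(\alpha-\alpha')}$ using the lower bound $[u_j]_{1+\alpha/\sigma,\alpha+\sigma;Q^{k_j}}\ge 2^{k_j(\alpha-\alpha')}$, which itself follows from \eqref{eq2.26} applied with the maximality index $k_j$ and the hypothesis $j\ge 1$.

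Finally, \eqref{eq 1.161} is immediate from the first inequality of \eqref{eq10.16} and the definition of the supremum (sacrificing a factor of $2$). To obtain \eqref{eq 1.162}, I will combine \eqref{eq 1.161} with $[u_j]_{1+\alpha'/\sigma,\alpha'+\sigma;Q_{r_j}(t_j,x_j)\cap\{t'>-1\}}\le 1$ to get
\begin{equation*}
r_j^{-(\alpha-\alpha')}\ge \tfrac{1}{2}[u_j]_{1+\alpha/\sigma,\alpha+\sigma;Q^{k_j}}\ge \tfrac{j}{2}\,2^{k_j(\alpha-\alpha')},
\end{equation*}
where the second inequality is once more the maximality inequality applied to \eqref{eq2.26}. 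Rearranging gives $(2^{k_j}r_j)^{\alpha-\alpha'}\le 2/j$, so $2^{k_j}r_j\to 0$.

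The only genuine obstacle I anticipate is the large-$r$ case of the upper bound in \eqref{eq10.16}: the cylinder $Q_r(t,x)\cap\{t'>-1\}$ can stick outside $(-1,0)\times B_1$ and is then not contained in any $Q^k$, so one cannot simply estimate by an $\alpha$-seminorm on $Q^k$. The resolution, as described above, is to pivot to the \emph{global} $\alpha'$-bound normalized in \eqref{eq2.26} and to exploit the fact that the same normalization forces $[u_j]_{1+\alpha/\sigma,\alpha+\sigma;Q^{k_j}}$ to be at least $2^{k_j(\alpha-\alpha')}$, which is exactly the correct scale to close the inequality with the constant $4^{\alpha-\alpha'}$.
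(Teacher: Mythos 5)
Your proof is correct and follows essentially the same route as the paper's: the first inequality by taking the center $(t,x)$ to be the later of the two points realizing the seminorm, the second by the case split $r\le 2^{-k_j-1}$ (using $Q_r(t,x)\subset Q^{k_j+1}$ and maximality of $k_j$) versus $r>2^{-k_j-1}$ (using the global $\alpha'$-bound from \eqref{eq2.26} and $[u_j]_{1+\alpha/\sigma,\alpha+\sigma;Q^{k_j}}\ge 2^{k_j(\alpha-\alpha')}$), and \eqref{eq 1.161}--\eqref{eq 1.162} from the supremum characterization together with the lower bound $j$ on the normalized seminorm.
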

\begin{proof}
The first inequality in \eqref{eq10.16} follows from the fact that for any $(t,x),(s,y)\in Q^{k_j}$ with $t\ge s$, we have $(s,y)\in Q_r(t,x)\cap \{t>-1\}$, where $r=\max(|x-y|,|t-s|^{1/\sigma})$. See, for instance, Claim 3.2 of \cite{Serra15}. For the second inequality, if $r\le 2^{-(k_j+1)}$, for any $(t,x)\in Q^{k_j}$, we have $Q_r(t,x)\subset Q^{k_j+1}$ and
\begin{align*}
&r^{-(\alpha-\alpha')}
[u_j]_{1+\alpha'/\sigma,\alpha'+\sigma;Q_r(t,x)\cap \{t>-1\}}\\
&\le 2^{\alpha-\alpha'}
[u_j]_{1+\alpha/\sigma,\alpha+\sigma;Q^{k_j+1}}\le  4^{\alpha-\alpha'}[u_j]_{1+\alpha/\sigma,\alpha+\sigma;Q^{k_j}},
\end{align*}
where the last inequality is due to the choice of $k_j$. On the other hand, if $r> 2^{-(k_j+1)}$, for any $(t,x)\in Q^{k_j}$,
\begin{align*}
&r^{-(\alpha-\alpha')}
[u_j]_{1+\alpha'/\sigma,\alpha'+\sigma;Q_r(t,x)\cap \{t>-1\}}\\
&\le 2^{(k_j+1)(\alpha-\alpha')}
[u_j]_{1+\alpha'/\sigma,\alpha'+\sigma;(-1,0)\times \bR^d}\le  2^{\alpha-\alpha'}[u_j]_{1+\alpha/\sigma,\alpha+\sigma;Q^{k_j}},
\end{align*}
where the last inequality follows from \eqref{eq2.26}. Thus, we obtain the second inequality in \eqref{eq10.16}.

Due to \eqref{eq10.16}, we can find $(t_j,x_j)\in Q^{k_j}$ and $r_j$ such that \eqref{eq 1.161} is satisfied
%If $r_j\ge 2^{-k_j-1}$,  then
%\begin{equation*}
%r_j^{-(\alpha-\alpha')}[u_j]_{1+\alpha'/\sigma,\alpha'+\sigma;Q_{r_j}(t_j,x_j)}\le 2^{(k_j+1)(\alpha-\alpha')}[u_j]_{1+\alpha'/\sigma,\alpha'+\sigma;Q_{r_j}(t_j,x_j)}.
%\end{equation*}
%Combining the two inequalities above, we have
%\begin{equation*}
%2^{-k_j(\alpha-\alpha')}[u_j]_{1+\alpha'/\sigma,\alpha'+\sigma;Q^{k_j}}\le C[u_j]_{1+\alpha'/\sigma,\alpha'+\sigma;\bR^{d+1}_0},
%\end{equation*}
%which is a contradiction with our assumption. Hence, we know that $r_j\le 2^{-k_j-1}$.
%From \eqref{eq 1.161}, we have
%$$
%r_j^{\alpha-\alpha'}2^{k_j(\alpha-\alpha')}2^{-k_j(\alpha-\alpha')}
%[u_j]_{1+\alpha/\sigma,\alpha+\sigma;Q^{k_j}}\le 2[u_j]_{1+\alpha'/\sigma,\alpha'+\sigma;Q_{r_j}(t_j,x_j)},
%$$
and thus by \eqref{eq2.26},
$$
(2^{k_j}r_j)^{\alpha-\alpha'}\le \frac{2[u_j]_{1+\alpha'/\sigma,\alpha'+\sigma;Q_{r_j}(t_j,x_j)\cap \{t>-1\}}}
{2^{-k_j(\alpha-\alpha')}
[u_j]_{1+\alpha/\sigma,\alpha+\sigma;Q^{k_j}}}\to 0 \quad \text{as}\,\, j\to \infty,
$$
which further implies \eqref{eq 1.162}. The lemma is proved.
\end{proof}

Let $T_j$ be the Taylor expansion of $u_j$ at $X_j=(t_j,x_j)$ of order $\nu=[\sigma+\alpha]$ in $x$ and 1 in $t$.
 Now we consider the blow-up sequence
\begin{equation*}
v_j(t,x)=\frac{u_j(t_j+r_j^\sigma t,x_j+r_jx)-T_j(t_j+r_j^\sigma t,x_j+r_jx)}{r_j^{\sigma+\alpha}[u_j]_{1+\alpha/\sigma,\alpha+\sigma;Q^{k_j}}}.
\end{equation*}

Here $(t_j,x_j)$ and $r_j$ are from Lemma \ref{lem4.2}. Note that $v_j$ is well defined on $(-R_j^\sigma,0)\times\bR^d$, where by Lemma \ref{lem4.2},
$$
R_j:=2^{-(k_j+1)}r_j^{-1}\to \infty\quad\text{as}\quad
j\to \infty.
$$
Observe that from \eqref{eq 1.161} and \eqref{eq 1.162}, for sufficiently large $j$ such that $r_j\le 2^{-(k_j+1)}$,
\begin{align}\label{eq 1.191}
[v_j]_{1+\alpha'/\sigma,\alpha'+\sigma;Q_1}
=\frac{r_j^{\sigma+\alpha'}[u_j]_{1+\alpha'/\sigma,\alpha'+\sigma;Q_{r_j}(t_j,x_j)}}
{r_j^{\sigma+\alpha}[u_j]_{1+\alpha/\sigma,\alpha+\sigma;Q^{k_j}}}\ge 1/2.
\end{align}

\begin{lemma}
                                \label{lem4.3}
For any $R>0$ and $\beta\in [0,\sigma+\alpha^\prime]$, we have
\begin{equation}
                                \label{eq 10.211}
[v_j]_{\beta/\sigma,\beta;Q_R \cap\{t>-R_j^\sigma\}}\le CR^{\sigma+\alpha-\beta},
\end{equation}
where $C$ depends only on $\alpha$ and $\alpha'$. Moreover, for any $0<R<R_j$ and $\beta\in [0,\sigma+\alpha]$, we have
\begin{equation}
                                \label{eq 10.211b}
[v_j]_{\beta/\sigma,\beta;Q_R}\le CR^{\sigma+\alpha-\beta},
\end{equation}
where $C$ depends only on $\alpha$ and $\alpha'$. Thus, we can find $v\in C^{1+\alpha/\sigma,\sigma+\alpha}(\bR^{d+1}_0)$ such that $v$ satisfies \eqref{eq 10.211b} for any $R>0$ and $\beta\in [0,\sigma+\alpha]$, and along a subsequence $v_j\to v$ in $C^{\beta/\sigma,\beta}$ locally uniformly for any $\beta\in [0,\sigma+\alpha)$.
\end{lemma}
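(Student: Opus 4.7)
The plan is to combine the key inequality \eqref{eq10.16} in Lemma~\ref{lem4.2} with the parabolic scaling built into the definition of $v_j$, exploiting the fact that the subtracted polynomial $T_j$, being of degree $1$ in $t$ and degree $\nu=[\sigma+\alpha]$ in $x$, is annihilated by any semi-norm $[\,\cdot\,]_{1+\alpha'/\sigma,\sigma+\alpha'}$ because $\sigma+\alpha'>\nu$ by assumption.

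First I would produce the top-order estimate on $Q_R\cap\{t>-R_j^\sigma\}$. Applying \eqref{eq10.16} at the point $(t_j,x_j)\in Q^{k_j}$ with radius $r=Rr_j$ yields
\[
[u_j]_{1+\alpha'/\sigma,\sigma+\alpha';Q_{Rr_j}(t_j,x_j)\cap\{t>-1\}}\le 4^{\alpha-\alpha'}(Rr_j)^{\alpha-\alpha'}[u_j]_{1+\alpha/\sigma,\sigma+\alpha;Q^{k_j}}.
\]
Because $T_j$ has vanishing semi-norm at this order, and the parabolic rescaling $(t,x)\mapsto(t_j+r_j^\sigma t,x_j+r_jx)$ contributes a factor $r_j^{\sigma+\alpha'}$, dividing by the normalization $C_j=r_j^{\sigma+\alpha}[u_j]_{1+\alpha/\sigma,\sigma+\alpha;Q^{k_j}}$ gives
\[
[v_j]_{1+\alpha'/\sigma,\sigma+\alpha';Q_R\cap\{t>-R_j^\sigma\}}\le 4^{\alpha-\alpha'}R^{\alpha-\alpha'}.
\]

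Next, I would promote this to the full range of exponents in \eqref{eq 10.211}. Because $T_j$ is the Taylor expansion of $u_j$ at $(t_j,x_j)$ of the prescribed order, one has $v_j(0,0)=\partial_tv_j(0,0)=D_x^kv_j(0,0)=0$ for $k=1,\dots,\nu$. A parabolic Taylor expansion at the origin, with remainder controlled by the top-order semi-norm just proved, then gives
\[
|v_j(t,x)|\le C\bigl(|t|^{1+\alpha'/\sigma}+|x|^{\sigma+\alpha'}\bigr)\,[v_j]_{1+\alpha'/\sigma,\sigma+\alpha';Q_R\cap\{t>-R_j^\sigma\}}\le CR^{\sigma+\alpha}
\]
on $Q_R\cap\{t>-R_j^\sigma\}$. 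Interpolating this $L^\infty$ bound with the top-order bound yields \eqref{eq 10.211} for every $\beta\in[0,\sigma+\alpha']$.

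For \eqref{eq 10.211b}, I would observe that when $R<R_j=2^{-(k_j+1)}/r_j$, a direct verification against \eqref{eq 3.11} shows $Q_{Rr_j}(t_j,x_j)\subset Q^{k_j+1}$. By the maximality in the choice of $k_j$, $[u_j]_{1+\alpha/\sigma,\sigma+\alpha;Q^{k_j+1}}\le 2^{\alpha-\alpha'}[u_j]_{1+\alpha/\sigma,\sigma+\alpha;Q^{k_j}}$, which after the same rescaling as above becomes $[v_j]_{1+\alpha/\sigma,\sigma+\alpha;Q_R}\le 2^{\alpha-\alpha'}$; interpolating with the $L^\infty$ bound above then gives \eqref{eq 10.211b}. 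Since $R_j\to\infty$, these uniform bounds apply on every fixed compact subset of $\bR^{d+1}_0$ once $j$ is large, and Arzelà–Ascoli furnishes a subsequence converging locally in $C^{\beta/\sigma,\beta}$ for every $\beta<\sigma+\alpha$; the limit $v$ inherits \eqref{eq 10.211b} by lower semi-continuity of the semi-norms. The hard part will be the $L^\infty$ bound: the semi-norm $[\,\cdot\,]_{1+\alpha'/\sigma,\sigma+\alpha'}$ decomposes, per the paper's convention, into independently scaling pieces in $t$ and in $x$, and one must combine separate Taylor expansions in each direction in a way compatible with the parabolic scale $|t|^{1/\sigma}\sim|x|$ to extract the mixed estimate $|v_j(t,x)|\lesssim(|t|^{1+\alpha'/\sigma}+|x|^{\sigma+\alpha'})[v_j]_{1+\alpha'/\sigma,\sigma+\alpha'}$ from the vanishing of $v_j$ and its relevant derivatives at the origin.
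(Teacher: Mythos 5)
Your argument is correct and follows the paper's own proof essentially step for step: both rest on inequality \eqref{eq10.16} applied with $r=Rr_j$ at $(t_j,x_j)$, the observation that the Taylor polynomial $T_j$ is annihilated by the top-order semi-norm, the parabolic rescaling $[w(r_j^\sigma\cdot,r_j\cdot)]_{\beta/\sigma,\beta}\le r_j^\beta[w]_{\beta/\sigma,\beta}$, and an interpolation in $\beta$, followed by Arzel\`a--Ascoli and a diagonal argument. The only distinction is presentational: the paper applies the Taylor-remainder bound $[u_j-T_j]_{\beta/\sigma,\beta;Q_r}\le r^{\sigma+\alpha'-\beta}[u_j]_{1+\alpha'/\sigma,\sigma+\alpha';Q_r}$ simultaneously for all $\beta\in[0,\sigma+\alpha']$ before rescaling, whereas you first isolate $\beta=\sigma+\alpha'$, then $\beta=0$ via the vanishing of $v_j$ and its derivatives at the origin, and interpolate; the mixed parabolic Taylor estimate you flag as the ``hard part'' is exactly the (unjustified) penultimate inequality in the paper's chain, so you are not introducing any new difficulty beyond what the paper itself leaves implicit.
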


We remark that \eqref{eq 10.211} will be used below to prove that $v$ satisfies Condition $(iii)$ in Theorem \ref{thm 10.251}, and \eqref{eq 10.211b} will be used to show that $v$ satisfies Condition $(i)$.

\begin{proof}[Proof of Lemma \ref{lem4.3}]
For any $R>0$ and $\beta\in [0,\sigma+\alpha']$,
\begin{align*}
&[v_j]_{\beta/\sigma,\beta;Q_R\cap\{t>-R_j^\sigma\}}\\
&=\frac{[u_j(t_j+r_j^\sigma \cdot,x_j+r_j\cdot)-T_j(t_j+r_j^\sigma \cdot,x_j+r_j\cdot)]_{\beta/\sigma,\beta;Q_R\cap\{t>-R_j^\sigma\}}}{r_j^{\sigma+\alpha}
[u_j]_{1+\alpha/\sigma,\alpha+\sigma;Q^{k_j}}}\\
& \le \frac{r_j^{\beta}[u_j-T_j]_{\beta/\sigma,\beta;
Q_{Rr_j}(t_j,x_j)\cap\{t>-1\}}}{r_j^{\sigma+\alpha}
 [u_j]_{1+\alpha/\sigma,\alpha+\sigma;Q^{k_j}}}\\
& \le\frac{r_j^\beta (Rr_j)^{\sigma+\alpha'-\beta}[u_j]_{1+\alpha'/\sigma,\alpha'+\sigma;
Q_{Rr_j}(t_j,x_j)\cap\{t>-1\}}}{r_j^{\sigma+\alpha}
 [u_j]_{1+\alpha/\sigma,\alpha+\sigma;Q^{k_j}}}\le CR^{\sigma+\alpha-\beta},
 \end{align*}
 where we used \eqref{eq10.16} in the last inequality.
% By the interpolation inequality, we get \eqref{eq 10.211}.

For any $R<R_j$, by the choice of $k_j$ we have
\begin{align*}
& [v_j]_{1+\alpha/\sigma,\alpha+\sigma;Q_R}=\frac{[u_j(t_j+r_j^\sigma \cdot,x_j+r_j\cdot)]_{1+\alpha/\sigma,\alpha+\sigma;Q_R}}{r_j^{\sigma+\alpha}
 [u_j]_{1+\alpha/\sigma,\alpha+\sigma;Q^{k_j}}}\\
&= \frac {[u_j]_{1+\alpha/\sigma,\alpha+\sigma;Q_{Rr_j}(t_j,x_j)}}{
 [u_j]_{1+\alpha/\sigma,\alpha+\sigma;Q^{k_j}}}
\le \frac {[u_j]_{1+\alpha/\sigma,\alpha+\sigma;Q^{k_{j+1}}}}{
 [u_j]_{1+\alpha/\sigma,\alpha+\sigma;Q^{k_j}}}\le 2^{\alpha-\alpha'}.
 \end{align*}
Using  the interpolation inequality, we reach \eqref{eq 10.211b}. The last statement of the lemma follows from the Arzela-Ascoli theorem and the Cauchy diagonal method.
\end{proof}

\begin{lemma}
                            \label{lem4.4}
The function $v$ in Lemma \ref{lem4.3} satisfies the conditions in Theorem \ref{thm 10.251}.
\end{lemma}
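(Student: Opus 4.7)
The plan is to verify the three hypotheses of Theorem \ref{thm 10.251} for the blow-up limit $v$ supplied by Lemma \ref{lem4.3}. Condition $(i)$ is essentially already in hand: passing $j\to\infty$ in the estimate \eqref{eq 10.211b} (which holds as soon as $R<R_j$) gives $[v]_{\beta/\sigma,\beta;Q_R}\le CR^{\sigma+\alpha-\beta}$ for every $R\ge 1$ and $\beta\in[0,\sigma+\alpha]$, so $(i)$ holds with $N_0=C$.

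For conditions $(ii)$ and $(iii)$ the first step is to rewrite the equation satisfied by $v_j$ after the scaling $(t,x)\mapsto(t_j+r_j^\sigma t,x_j+r_jx)$ and the subtraction of the Taylor polynomial $T_j$. Because $T_j$ is affine in $x$ (its Taylor degree $\nu=[\sigma+\alpha]$ is $1$ for $\sigma\ge 1$ and $0$ for $\sigma<1$), the rescaled polynomial is annihilated by every kernel in $\mathcal{L}_0$---using \eqref{eq10.58} in the borderline case $\sigma=1$. A direct computation then shows that on $(-R_j^\sigma,0)\times \bR^d$,
\begin{equation*}
\partial_t v_j=\inf_{a\in\cA_j}\bigl(\tilde L_{a,j}v_j+h_{a,j}\bigr),\qquad \tilde K_{a,j}(y):=r_j^{d+\sigma}K_a(r_jy)\in\mathcal{L}_0,
\end{equation*}
where $h_{a,j}=h_{a,j}^0+\tilde g_{a,j}$ splits into a constant part (collecting the offsets from $f_a(t_j,x_j)$ and $\partial_t T_j$) and a variable part with $|\tilde g_{a,j}(t,x)-\tilde g_{a,j}(t',x')|\le \max(|t-t'|^{\alpha/\sigma},|x-x'|^\alpha)/M_j$, where $M_j:=[u_j]_{1+\alpha/\sigma,\alpha+\sigma;Q^{k_j}}\to\infty$. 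Condition $(ii)$ then follows routinely: for any $(s,h)\in\bR^{d+1}_0$, setting $w_j:=v_j(\cdot+s,\cdot+h)-v_j$, the translation invariance of $\tilde L_{a,j}$ together with the bilateral inequality $\inf_a(A_a-B_a)\le \inf_aA_a-\inf_aB_a\le\sup_a(A_a-B_a)$ gives
\begin{equation*}
\partial_tw_j-\cM^+w_j\le\varepsilon_j,\qquad \partial_tw_j-\cM^-w_j\ge -\varepsilon_j,
\end{equation*}
with $\varepsilon_j\le \max(|s|^{\alpha/\sigma},|h|^\alpha)/M_j\to 0$; stability of viscosity sub- and supersolutions under the locally uniform convergence $v_j\to v$ from Lemma \ref{lem4.3} then passes \eqref{eq 10.131}--\eqref{eq 10.132} to $v$.

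Condition $(iii)$, relevant only for $\sigma>1$, is the main technical point and is where the non-symmetry really matters. I adapt the finite-difference approximation from the commented-out lemma of the paper: for a fixed non-negative compactly supported $\mu$ and $\delta\in(0,1)$, set
\begin{equation*}
\Delta^{(\delta)}_\mu v_j(t,x):=\int_{\bR^d}\Big(v_j(t,x+h)-v_j(t,x)-\frac{v_j(t,x)-v_j(t,x-\delta h)}{\delta}\Big)\,d\mu(h),
\end{equation*}
which, for fixed $j$, converges locally uniformly to $(v_j)_\mu$ as $\delta\to 0$. Differentiating in $t$, substituting the equation for $v_j$ at the three points $x+h$, $x$, $x-\delta h$, applying $\int\inf_a\le\inf_a\int$ to the two positive-weight integrals, and selecting a single $a^*=a^*(t,x)$ realizing the infimum at $(t,x)$, the coefficients of the constant parts $h_{a^*,j}^0$ cancel exactly thanks to the algebraic identity $\|\mu\|_{L_1}+\|\mu\|_{L_1}/\delta-(1+1/\delta)\|\mu\|_{L_1}=0$, leaving
\begin{equation*}
\partial_t\bigl(\Delta^{(\delta)}_\mu v_j\bigr)\le\cM^+\bigl(\Delta^{(\delta)}_\mu v_j\bigr)+E_j^{(\delta)},\qquad |E_j^{(\delta)}|\le C\bigl(1+\delta^{\alpha-1}\bigr)M_j^{-1}\!\!\int|h|^\alpha\,d\mu(h),
\end{equation*}
with the error coming only from the variable part $\tilde g_{a^*,j}$. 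The subtle---and, I expect, hardest---point is the order of the two limits. For fixed $\delta$, $E_j^{(\delta)}\to 0$ as $j\to\infty$, and Lemma \ref{lem4.3} gives $\Delta^{(\delta)}_\mu v_j\to \Delta^{(\delta)}_\mu v$ locally uniformly, so the extremal inequality passes to the limit $\partial_t(\Delta^{(\delta)}_\mu v)\le\cM^+(\Delta^{(\delta)}_\mu v)$ in the viscosity sense; only after this is one entitled to send $\delta\to 0$, in which limit $\Delta^{(\delta)}_\mu v\to v_\mu$ locally uniformly (with the tails in $\cM^+$ controlled by the growth bound already obtained in condition $(i)$), yielding $\partial_t v_\mu-\cM^+ v_\mu\le 0$. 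Reversing the order of limits would be fatal because $\delta^{\alpha-1}$ blows up; in the symmetric setting of \cite{Serra15} this difficulty does not arise since the centered second difference that replaces $\Delta^{(\delta)}_\mu$ carries no $\delta^{-1}$ factor.
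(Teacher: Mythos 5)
Your overall strategy is the same as the paper's: verify $(i)$ from the blow-up bounds, verify $(ii)$ by a direct translation argument with the error $\varepsilon_j\to 0$, and verify $(iii)$ via the finite-difference approximation $\Delta^{(\delta)}_\mu v_j$ (the paper's $V_j$), passing $j\to\infty$ first and only then $\delta\to 0$—you correctly identify that this order is the crucial point because of the $\delta^{\alpha-1}$ factor. The only difference is that you first repackage the equation for $v_j$ into $\partial_t v_j=\inf_a(\tilde L_{a,j}v_j+h_{a,j})$ with a constant-plus-small split of $h_{a,j}$, whereas the paper computes with the original equation for $u_j$ and lets the $T_j$-contributions cancel at the level of differences; the two presentations are equivalent.

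One factual slip: you assert that the Taylor degree $\nu=[\sigma+\alpha]$ is at most $1$, so that the rescaled $T_j$ is ``annihilated by every kernel.'' This is false when $\sigma+\alpha>2$ (which Theorem \ref{thm 1} permits and the paper explicitly treats), where $\nu=2$ and $T_j$ is genuinely quadratic in $x$. In that case $L_aT_j$ is a nonzero constant, not zero, because $\delta T_j(t,x,y)=\tfrac12 y^T D^2 T_j\,y$ depends on $y$ but not on $(t,x)$. Your argument survives unscathed, since you only ever take \emph{differences}: $L_a T_j(\cdot+s,\cdot+h)-L_a T_j=0$ and the constant $L_a T_j$ is absorbed into $h_{a,j}^0$, which drops out of the algebraic identity. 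But the claim as stated is wrong and should be replaced by the correct statement—namely, that $T_j$ being at most quadratic in $x$ and linear in $t$ forces $\delta T_j$ to be $(t,x)$-independent, hence $L_a T_j$ is constant.
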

\begin{proof}
By Lemma \ref{lem4.3}, Condition $(i)$ is satisfied. Next we verify Condition $(iii)$ for $\sigma\in (1,2)$.
For any  measure $\mu$ with compact support and $\delta\in (0,1)$, we define
\begin{equation*}
V_j(t,x)=\int_{\bR^d}v_j(t,x+h)-v_j(t,x)-\frac{v_j(t,x)-v_j(t,x-\delta h)}{\delta}\,d\mu(h).
\end{equation*}
Since $T_j$ is linear in $t$, from the definition of $v_j$, we have
\begin{align*}
&\partial_tV_j(t,x)
=\frac{r_j^{-\alpha}}{[u_j]_{1+\alpha/\sigma,\alpha+\sigma;Q^{k_j}}}\\
&\quad \cdot
\int_{\bR^d}\Big[\partial_t u_j(t_j+r_j^\sigma t,x_j+r_j (x+h))-\partial_t u_j(t_j+r_j^\sigma t,x_j+r_j x)\\
&\qquad-\frac{\partial_t u_j(t_j+r_j^\sigma t,x_j+r_j x)-\partial_t u_j(t_j+r_j^\sigma t,x_j+r_j (x-\delta h))}{\delta}\Big]\,d\mu(h),
\end{align*}
which is equal to
\begin{align}
&\frac{r_j^{-\alpha}}{[u_j]_{1+\alpha/\sigma,\alpha+\sigma;Q^{k_j}}}
\Big[\int_{\bR^d}\partial_tu_j(t_j+r_j^\sigma t,x_j+r_j (x+h))\,d\mu(h)\nonumber\\ \nonumber
&\quad+
\int_{\bR^d}\frac{\partial_t u_j(t_j+r_j^\sigma t,x_j+r_j (x-\delta h))}{\delta}\,d\mu(h)\\
&\quad -(1+1/\delta)\|\mu\|_{L_1}\partial_t u_j(t_j+r_j^\sigma t,x_j+r_j x)\Big]
.\label{eq 10.233}
\end{align}
For any $a\in \cA_j$, define $\hat K_{a}(y)=r_j^{d+\sigma}K_a(r_jy)$,
which satisfies
\begin{equation*}
\frac{\lambda(2-\sigma)}{|y|^{d+\sigma}}\le \hat K_{a}(y)\le \frac{\Lambda(2-\sigma)}{|y|^{d+\sigma}},
\end{equation*}
and $\hat L_{a}$ be the corresponding operator with kernel $\hat K_{a}$.

Clearly,
\begin{align*}
&\hat L_{a}V_j
=\int_{\bR^d}\Big[\hat L_{a}(v_j(t,x+h)-v_j(t,x))-\frac{\hat L_{a}(v_j(t,x)-v_j(t,x-\delta h))}{\delta}\Big]d\mu(h)\\
&=\frac{r_j^{-\alpha}}{[u_j]_{1+\alpha/\sigma,\alpha+\sigma;Q^{k_j}}}\int_{\bR^d}\Big\{(L_{{a}}u_j)(t_j+r_j^\sigma t,x_j+r_j x+r_j h)\\ \nonumber
&\quad-(L_{{a}}u_j)(t_j+r_j^\sigma t,x_j+r_j x)\\
&\quad-\frac{(L_{{a}}u_j)(t_j+r_j^\sigma t,x_j+r_j x)-(L_{{a}}u_j)(t_j+r_j^\sigma t,x_j+r_j (x-\delta h))}{\delta}\Big\}\,d\mu(h),
\end{align*}
where in the second equality, we used the definitions of $\hat L_a$, $v_j$ and the fact that $T_j$ is at most second-order in $x$ variable, so that for $\sigma>1$ and any $y\in \bR^d$
\begin{equation*}
\delta T_j(t,x+h,y)-\delta T_j(t,x,y)=0.
\end{equation*}
Therefore, for any $(t,x)\in (-R_j^{\sigma},0)\times\bR^d$,
\begin{align*}
&\sup_{a\in \cA_j} \hat L_{a}(V_j)=\frac{r_j^{-\alpha}}{[u_j]_{1+\alpha/\sigma,\alpha+\sigma;Q^{k_j}}}\sup_{a\in \cA_j}\Big\{\int_{\bR^d}\Big((L_{{a}}u_j)(t_j+r_j^\sigma t,x_j+r_j x+r_j h)\\
&\, +\frac{(L_{a}u_j)(t_j+r_j^\sigma t,x_j+r_j (x-\delta h))}{\delta}\Big)d\mu(h)-(1+\frac{1}{\delta})(L_{a}u_j)(t_j+r_j^\sigma t,x_j+r_j x)\Big\}
\end{align*}
\begin{align*}
&=\frac{r_j^{-\alpha}}{[u_j]_{1+\alpha/\sigma,\alpha+\sigma;Q^{k_j}}}\sup_{a\in \cA_j}\Big\{\int_{\bR^d}(L_{a}u_j)(t_j+r_j^\sigma t,x_j+r_j x+r_j h)\\
&\quad +f_a(t_j+r_j^\sigma r,x_j+r_j x+r_j h)+\frac{1}{\delta}\big((L_{a}u_j)(t_j+r_j^\sigma t,x_j+r_j (x-\delta h))\\
&\quad+f_{{a}}(t_j+r_j^\sigma t,x_j+r_j (x-\delta h))\big)\,d\mu(h)
\\
&\quad-(1+\frac{1}{\delta})((L_{a}u_j)(t_j+r_j^\sigma t,x_j+r_j x)+f_a(t_j+r_j^\sigma t,x_j+r_j x))\cdot\|\mu\|_{L_1}\\
&\quad-\int_{\bR^d}\Big(f_a(t_j+r_j^\sigma t,x_j+r_j x+r_j h)-f_a(t_j+r_j^\sigma t,x_j+r_j x)\\
&\quad+\frac{1}{\delta}(f_{a}(t_j+r_j^\sigma t,x_j+r_j (x-\delta h))-f_a(t_j+r_j^\sigma t,x_j+r_j x)\Big)\,d\mu(h)\Big\}.
\end{align*}
Note that for sufficiently large $j$ such that   $\max((-t)^{1/\sigma},|x|+|h|)\le R_j$ whenever $h\in \text{supp} \mu$, we have
\begin{align*}
&|f_a(t_j+r_j^\sigma t,x_j+r_j x+r_j h)-f_a(t_j+r_j^\sigma t,x_j+r_j x)|\le [f_{{a}}]_{\alpha/\sigma,\alpha;Q_1}|r_j h|^{\alpha},\\
%\frac{1}{\delta}
&|f_{{a}}(t_j+r_j^\sigma t,x_j+r_j (x-\delta h))-f_{{a}}(t_j+r_j^\sigma t,x_j+r_j x)|\le %\frac{1}{\delta}
[f_{{a}}]_{\alpha/\sigma,\alpha;Q_1}|\delta r_j h|^{\alpha}.
\end{align*}
Therefore, by the  inequality
$$\sup\{f+g-h\}\ge \inf f+\inf g-\inf h,$$
we have that for $(t,x)\in \bR^{d+1}_0$ and $h\in \text{supp}\,\mu$ so that $\max((-t)^{1/\sigma},|x|+|h|)\le R_j$,
\begin{align}\nonumber
&\sup_{a\in \cA_j}\hat L_{a}(V_j)(t,x)\\ \nonumber
&\ge\frac{r_j^{-\alpha}}{[u_j]_{1+\alpha/\sigma,\alpha+\sigma;Q^{k_j}}}
\Big[\inf_{a \in \cA_j}\int_{\bR^d}(L_{{a}}u_j)(t_j+r_j^\sigma t,x_j+r_j x+r_j h)\\ \nonumber
&\quad+f_a(t_j+r_j^\sigma r,x_j+r_j x+r_j h)\,d\mu(h)\\ \nonumber
&\quad+\inf_{a \in \cA_j}\int_{\bR^d}\frac{1}{\delta}\Big((L_{{a}}u_j)(t_j+r_j^\sigma t,x_j+r_j (x-\delta h))\\ \nonumber
&\quad+f_{{a}}(t_j+r_j^\sigma t,x_j+r_j (x-\delta h))\Big)\,d\mu(h)\\ \nonumber
&\quad-\frac{\delta+1}{\delta}\|\mu\|_{L_1}\inf_a[(L_au_j)(t_j+r_j^\sigma t,x_j+r_j x)+f_a(t_j+r_j^\sigma t,x_j+r_j x)]\\ \label{eq 1.181}
&\quad-(1+\delta^{\alpha-1})r_j^\alpha
\sup_{a\in \cA_j}[f_{a}]_{\alpha/\sigma,\alpha;Q_1}\int_{\bR^d}|h|^\alpha\,d\mu(h)\Big].
\end{align}
Since each $u_j$ satisfies
\begin{equation}\label{eq 10.253}
\partial_t u_j=\inf_{a\in \cA_j}(L_a u_j+f_a)\quad \text{in}\,\, Q_1,
\end{equation}
it follows from \eqref{eq 10.233} and \eqref{eq 1.181} that in any bounded subset of $\bR_0^{d+1}$, for sufficiently large $j$,
\begin{align}
                                            \label{eq3.57}
\partial_t V_j-\sup_{a\in \cA_j}\hat L_{a}V_j\le \frac{1+\delta^{\alpha-1}}{[u_j]_{1+\alpha/\sigma,\alpha+\sigma;Q^{k_j}}}
\sup_{a\in \cA_j}[f_a]_{\alpha/\sigma,\alpha}\int_{\bR^d}|h|^\alpha\,d\mu(h).
\end{align}

We denote
$$
V(t,x):=\int_{\bR^d}\Big[v(t,x+h)-v(t,x)-\frac{v(t,x)-v(t,x-\delta h)}{\delta}\Big]\,d\mu(h).
$$
For fixed $(t,x)\in \bR^{d+1}_0$, by \eqref{eq 10.211} in Lemma \ref{lem4.3} and using the fact that $\mu$ has compact support, we have
\begin{equation}
                            \label{eq4.14}
\lim_j\partial_t V_j(t,x)=\partial_t\lim_j V_j(t,x)=\partial_t V(t,x),
\end{equation}
for $|y|\le 1$,
\begin{equation}
                            \label{eq4.17}
|\delta V_j(t,x,y)|\le C|y|^{\sigma+\alpha'},\quad
|\delta V(t,x,y)|\le C|y|^{\sigma+\alpha'},
\end{equation}
and for $|y|>1$,
\begin{equation}
                                    \label{eq4.18}
|V_j(t,y)|,|V(t,y)|\le C|y|^{\alpha},
\end{equation}
where $C$ depends on $\mu$.
Clearly,
\begin{equation*}
\sup_{a\in \cA_j}\big|L_a(V_j-V)(t,x)\big|
\le C\int_{\bR^d} \big|\delta (V_j-V)(t,x,y)\big||y|^{-d-\sigma}\,dy.
%&\le C\int_{B_R}\delta (V_j-V)(t,x,y)K_a(y)\,dy+\int_{|y|>R}\delta (V_j-V)(t,x,y)K_a(y)\,dy.
\end{equation*}
It follows from Lemma \ref{lem4.3} that $\delta (V_j-V)\to 0$ locally uniformly. Therefore, by \eqref{eq4.17}, \eqref{eq4.18}, and the dominated convergence theorem, we have
%Applying the estimate of $V$ and $V_j$ above,  for any $\epsilon>0$, by taking $R$ sufficiently large we have
%\begin{align*}
%&|\int_{|y|>R}\delta (V_j-V)(t,x,y)K_a(y)\,dy|\\
%&\le \int_{|y|>R}\Big(|V_j|(t,x+y)+|V|(t,x+y)+|V(t,x)-V_j(t,x)|\\
%&+|y|(|DV_j(t,x)-DV(t,x)|)\Big)\frac{\Lambda}{|y|^{d+\sigma}}\,dy\\
%&\le C\int_{|y|>R}\frac{\Lambda}{|y|^{d+\sigma-\alpha}}\,dy\\
%&+C\int_{|y|>R}|V(t,x)-V_j(t,x)|+|y|(|DV_j(t,x)-DV(t,x)|))\frac{\Lambda}{|y|^{d+\sigma}}\,dy\\
%&\le \epsilon.
%\end{align*}
%On the other hand, we know that $V_j\to V$ in $C^{1+\alpha'/(2\sigma),\sigma+\alpha'/2}(Q_R(t,x))$. Hence, for $j$ sufficiently large, it follows immediately
%\begin{equation*}
%|\int_{|y|<R}\delta(V-V_j)(t,x,y)K_a(y)\,dy|\le \epsilon.
%\end{equation*}
%Therefore, we prove that for any $(t,x)$
%\begin{equation*}
%\lim_j|L_a(V_j-V)(t,x)|=0,
%\end{equation*}
%and the convergence is independent of $a$, which further implies that
\begin{equation*}
\lim_j\sup_{a\in \cA_j}\big|(L_a(V_j-V)(t,x))\big|=0,
\end{equation*}
i.e.,
\begin{equation}
                            \label{eq4.27}
\lim_j\sup_{a\in \cA_k}L_aV_j(t,x)=\sup_{a\in \cA_k}L_aV(t,x).
\end{equation}
Since $\mu$ has compact support, by Lemma \ref{lem4.2} and \eqref{eq2.26}, we have  $R_j\to \infty$ and
$$
[u_j]_{1+\alpha/\sigma,\alpha+\sigma;Q^{k_j}}\ge 2^{-k_j(\alpha-\alpha')}[u_j]_{1+\alpha/\sigma,\alpha+\sigma;Q^{k_j}}\to \infty.
$$
For fixed $\delta\in (0,1)$, we send $j$ to infinity to get from \eqref{eq3.57}, \eqref{eq4.14}, and \eqref{eq4.27} that
$$
\partial_tV-\cM^+V\le 0\quad \text{in}\,\, \bR^{d+1}_0.
$$
By sending $\delta$ to $0$ and using the dominated convergence theorem, we conclude that
\begin{equation*}
\int_{\bR^d}\big(v(t,x+h)-v(t,x)-h^TDv(t,x)\big)\,d\mu(h)
\end{equation*}
is a subsolution as well. Therefore, for $\sigma>1$, $v$ satisfies Condition $(iii)$.

It remains to verify that $v$ satisfies Condition $(ii)$.
%Hence, we estimate
%\begin{equation*}
%\partial_t(v_j(t+s,x+h)-v_j(t,x))-M^-(v_j(t+s,x+h)-v_j(t,x))\quad \text{in}\quad  Q_{(r_j2^{k_j+1})^{-1}}.
%\end{equation*}
Clearly,  for fixed $(t,x),(s,h)\in \bR_0^{d+1}$, when $j$ is sufficiently large,
\begin{align}\nonumber
&\partial_t\big(v_j(t+s,x+h)-v_j(t,x)\big)
=r_j^{-\alpha}[u_j]_{1+\alpha/\sigma,\alpha+\sigma;Q^{k_j}}^{-1}\\
&\quad\cdot \big(\partial_tu_j(t_j+r_j^\sigma (t+s),x_j+r_j (x+h))-\partial_t u_j(t_j+r_j^\sigma t,x_j+r_j x)\big). \label{eq 10.251}
\end{align}
On the other hand,
\begin{align}\nonumber
&\cM^-\big(v_j(t+s,x+h)-v_j(t,x)\big)
=r_j^{-\alpha}[u_j]_{1+\alpha/\sigma,\alpha+\sigma;Q^{k_j}}^{-1}\\
&\quad \cdot \cM^-\big(u_j(t_j+r_j^\sigma (t+s),x_j+r_j (x+h))-u_j(t_j+r_j^\sigma t,x_j+r_j x)\big).\label{eq 10.252}
\end{align}
Combining \eqref{eq 10.253}, \eqref{eq 10.251}, and \eqref{eq 10.252}, we obtain that for $j$ sufficiently large,
\begin{align*}
&\partial_t\big(v_j(t+s,x+h)-v_j(t,x)\big)
-\cM^-\big(v_j(t+s,x+h)-v_j(t,x)\big)\\
&\ge -[u_j]_{1+\alpha/\sigma,\alpha+\sigma;Q^{k_j}}^{-1}
\sup_a[f_a]_{\alpha/\sigma,\alpha;Q_1}.
\end{align*}
By sending $j$ to infinity, we get for any $(t,x)\in \bR^{d+1}_0$,
\begin{equation*}
\partial_t\big(v(t+s,x+h)-v(t,x)\big)
-\cM^-\big(v(t+s,x+h)-v(t,x)\big)\ge 0.
\end{equation*}
Similarly,
\begin{equation*}
\partial_t\big(v(t+s,x+h)-v(t,x)\big)
-\cM^+\big(v(t+s,x+h)-v(t,x)\big)\le 0.
\end{equation*}
The lemma is proved.
\end{proof}

Now we are ready to finish
\begin{proof}[Proof of Theorem \ref{thm 1104.1}]
By Lemma \ref{lem4.4} and Theorem \ref{thm 10.251}, $v$ is a polynomial of order $\nu$ in $x$ and 1 in $t$. Since at the origin $v_j$ along with its first derivative in $t$ and up to $\nu$-th order derivatives in $x$ are $0$, by Lemma \ref{lem4.3} the same is true for $v$. Therefore, $v\equiv 0$.
This gives us a contradiction with \eqref{eq 1.191} and Lemma \ref{lem4.3}.
The proof is completed.
 \end{proof}

\subsection{Equations with $(t,x)$-dependent kernels}
In this subsection, we consider the case that kernels also depend on $(t,x)$ and H\"older continuous in $(t,x)$, i.e., there exists $A>0$ such that for any $a\in \mathcal{A}$, \eqref{eq 11.051} is satisfied.
%\begin{equation}\label{eq 11.051}
%|K_a(t,x,y)-K_a(t',x',y)|\le A\big(|x-x'|^\alpha+|t-t'|^{\alpha/\sigma}\big)/|y|^{d+\sigma}.
%\end{equation}
%\begin{theorem}\label{thm 1125.1}
%Let $\sigma\in (0,2)$ and $0<\lambda\le \Lambda$. Assume $K_a$ satisfies \eqref{eq 11.051}. There is a constant $\hat{\alpha}$ depending on $d,\sigma,\lambda$, and $\Lambda$ so that the following holds. Let $0<\alpha<\hat{\alpha}$ and $\sigma+\alpha$ not be an integer. % and  $[\sigma+\alpha]<\sigma+\alpha^\prime$.
% Suppose $u\in C^{1+\alpha/\sigma,\sigma+\alpha}(Q_2)\cap C^{\alpha/\sigma,\alpha}(\bR^{d+1}_0)$ is a solution of
%\begin{equation}
%u_t=\inf_{a\in\mathcal{A}}(L_a u+f_a)\quad \text{in}\quad Q_2.\label{eq 1110.1}
%\end{equation}
%Then,
%\begin{equation*}
%[u]_{1+\alpha/\sigma,\alpha+\sigma;Q_{1/2}}\le C\|u\|_{\alpha/\sigma,\alpha}+CC_0,
%\end{equation*}
%where $$C_0=\sup_a[f_a]_{\alpha/\sigma,\alpha;Q_{1}}.$$
%\end{theorem}
%\begin{proof}
We only prove Theorem \ref{thm 1} in the case when $\sigma+\alpha>2$ and the proof of the cases $\sigma+\alpha<2$ is similar and actually simpler. Below we divide the proof into several steps.

Let $\eta$ be a nonnegative smooth cutoff function with $\eta\equiv 1$ in $Q_{1}$ and vanishes outside $(-(5/4)^\sigma,(5/4)^\sigma)\times B_{5/4}$. Set $v:=\eta u$
%\in C^{1+\alpha/\sigma,\alpha+\sigma}$
and note that  in $Q_1$,
\begin{align*}
v_t&=\eta u_t+\eta_t u%=\eta\inf_{a\in \mathcal{A}} (L_au+f_a)+\eta_tu
=\inf_{a\in \mathcal{A}}(\eta L_au+\eta f_a+\eta_t u)\\
&=\inf_{a\in \mathcal{A}}(L_a^0v+(L_a-L_a^0)v+\eta L_au-L_av + \eta f_a+\eta_tu)
%&=\inf_{a\in \mathcal{A}}(L_av+h_a+\eta f_a+\eta_t u)\\
%&=\inf_{a\in \mathcal{A}}\Big(\int_{\bR^d}\delta v(t,x,y)K_a(0,0,y)\,dy+g_a+h_a+\eta f_a+\eta_t u\Big),
\end{align*}
where
\begin{align*}
L_a^0v = \int_{\bR^d}\delta v(t,x,y)K_a(0,0,y)\,dy.
\end{align*}
We further define
\begin{align*}
h_a&:=\eta L_au-L_a v\\
&=\int_{\bR^d}
\big((\eta(t,x)-\eta(t,x+y))u(t,x+y)+y^TD\eta(t,x)u(t,x)\big)K_a(t,x,y)\,dy
\end{align*}
and
$$
g_a:=(L_a-L_a^0)v=\int_{\bR^d}\delta v(t,x,y)\big(K_a(t,x,y)-K_a(0,0,y)\big)\,dy.
$$
Here in order to apply the argument of freezing the coefficients, we subtracted and added $K_a(0,0,y)$ in the formula above.

\begin{lemma}
                            \label{lem4.5}
Assume that $u\in C^{1+\alpha/\sigma,\sigma+\alpha}(Q_{11/8})\cap C^{\alpha/\sigma,\alpha}((-(11/8)^\sigma,0)\times\bR^d)$. Let $h_{a}$ and $g_{a}$ be functions defined above. Then for any $\alpha\in \cA$, we have
\begin{align}
                                    \label{eq4.19}
[g_{a}]_{\alpha/\sigma,\alpha;Q_1}&\le CA\big([v]_{1+\alpha/\sigma,\alpha+\sigma}+
[v]_{\alpha/\sigma,\alpha}\big),\\
                                    \label{eq4.21}
[h_a]_{\alpha/\sigma,\alpha;Q_1}&\le C(A+1)
\big(\|u\|_{\alpha/\sigma,\alpha;(-(11/8)^\sigma,0)\times\bR^d}+\|D^2 u\|_{L_\infty(Q_{11/8})}\big).
\end{align}
Moreover,
\begin{align}
&[u]_{1+\alpha/\sigma,\alpha+\sigma;Q_{1/2}}\le C\Big([u]_{1+\alpha'/\sigma,\alpha'+\sigma;Q_{5/4}}+C_0
+A[u]_{1+\alpha/\sigma,\alpha+\sigma;Q_{5/4}}\nonumber\\
                \label{eq 1.211}
&+(A+1)(\|u\|_{\alpha/\sigma,\alpha;(-(11/8)^\sigma,0)\times \bR^d}+\|D^2 u\|_{L_\infty(Q_{11/8})})\big).
\end{align}
Here the constant $C$ depends only on $d$, $\lambda$, $\Lambda$, $\alpha$, and $\sigma$, and is uniformly bounded as $\sigma\to 2$.
\end{lemma}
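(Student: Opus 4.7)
The three estimates are proved in order.

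\textbf{Estimate \eqref{eq4.19} on $g_a$.} For $(t,x),(t',x') \in Q_1$ with $\rho := |x-x'| + |t-t'|^{1/\sigma}$, split
\begin{align*}
g_a(t,x) - g_a(t',x') &= \int_{\bR^d} \big(\delta v(t,x,y) - \delta v(t',x',y)\big)\big(K_a(t,x,y) - K_a(0,0,y)\big)\, dy\\
&\quad + \int_{\bR^d} \delta v(t',x',y)\big(K_a(t,x,y) - K_a(t',x',y)\big)\, dy.
\end{align*}
For the second integral the kernel difference directly supplies $A\rho^\alpha\Lambda(2-\sigma)/|y|^{d+\sigma}$ from \eqref{eq 11.051}; paired with the standard pointwise bound on $|\delta v|$ (controlled by $|y|^{\sigma+\alpha}[v]_{1+\alpha/\sigma,\sigma+\alpha}$ for small $|y|$, and by $[v]_{\alpha/\sigma,\alpha}$ plus compact support of $v$ for large $|y|$), this integrates to $CA\rho^\alpha([v]_{1+\alpha/\sigma,\sigma+\alpha}+[v]_{\alpha/\sigma,\alpha})$. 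For the first integral, $|K_a(t,x,y)-K_a(0,0,y)| \le CA\Lambda(2-\sigma)/|y|^{d+\sigma}$ uniformly on $Q_1$; we further split at $|y|=\rho$. For $|y|<\rho$ a second-order Taylor expansion (legitimate since $\sigma+\alpha>2$) yields $|\delta v(t,x,y)-\delta v(t',x',y)| \le C[v]_{1+\alpha/\sigma,\sigma+\alpha}\rho^{\sigma+\alpha-2}|y|^2$, and the resulting integral is $CA\rho^\alpha[v]_{1+\alpha/\sigma,\sigma+\alpha}$. For $|y|>\rho$ one distributes the $(t,x)$-difference across the three pieces of $\delta v$ using the Hölder continuity of $v$ and $Dv$; combined with the compact support of $v$ (which makes the integrand vanish for $|y|$ beyond the support radius), this again produces the $\rho^\alpha$ factor and the two norms on the right-hand side of \eqref{eq4.19}.

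\textbf{Estimate \eqref{eq4.21} on $h_a$.} For $(t,x) \in Q_1$, since $\eta \equiv 1$ on a neighborhood of $Q_1$, we have $\eta(t,x)=1$ and $D\eta(t,x)=0$, so
\begin{equation*}
h_a(t,x) = \int_{\bR^d}\big(1-\eta(t,x+y)\big)u(t,x+y)K_a(t,x,y)\, dy.
\end{equation*}
The integrand is supported in $|y|\ge c_0$ for some $c_0>0$ uniform in $(t,x) \in Q_1$, hence no singularity of $K_a$ is encountered. Differencing in $(t,x)$ produces contributions from $\eta$ (smooth, bounded in $C^2$), from $u$ (contributing $\|u\|_{\alpha/\sigma,\alpha}$ for pointwise differencing and $\|D^2 u\|_{L_\infty(Q_{11/8})}$ for Taylor remainders near the support of $1-\eta$), and from the kernel (contributing $A$ via \eqref{eq 11.051}); the $|y|$-integrals converge by the $|y|\ge c_0$ support restriction, and the factor $A+1$ arises from combining the $A$-dependent kernel difference with the $(A+\Lambda)$-bounded kernel itself.

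\textbf{Estimate \eqref{eq 1.211}.} By construction $v=\eta u$ satisfies
\begin{equation*}
v_t = \inf_{a\in\cA}\big(L_a^0 v + F_a\big) \quad \text{in } Q_1, \qquad F_a := g_a + h_a + \eta f_a + \eta_t u,
\end{equation*}
with the translation-invariant operator $L_a^0$ of kernel $K_a(0,0,y) \in \mathcal{L}_0$. Applying Theorem \ref{thm 1104.1} to $v$ yields
\begin{equation*}
[v]_{1+\alpha/\sigma,\sigma+\alpha;Q_{1/2}} \le C[v]_{1+\alpha'/\sigma,\sigma+\alpha';(-1,0)\times\bR^d} + C\sup_{a\in\cA}[F_a]_{\alpha/\sigma,\alpha;Q_1}.
\end{equation*}
Since $v=u$ on $Q_{1/2}$, the left-hand side controls $[u]_{1+\alpha/\sigma,\sigma+\alpha;Q_{1/2}}$. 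A standard product-rule estimate on $\eta u$ gives $[v]_{1+\alpha'/\sigma,\sigma+\alpha';(-1,0)\times\bR^d} \le C([u]_{1+\alpha'/\sigma,\sigma+\alpha';Q_{5/4}} + \|u\|_{\alpha/\sigma,\alpha} + \|D^2 u\|_{L_\infty(Q_{11/8})})$ and similarly $[v]_{1+\alpha/\sigma,\sigma+\alpha} + [v]_{\alpha/\sigma,\alpha} \le C([u]_{1+\alpha/\sigma,\sigma+\alpha;Q_{5/4}} + \|u\|_{\alpha/\sigma,\alpha} + \|D^2 u\|_{L_\infty(Q_{11/8})})$. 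Substituting these into \eqref{eq4.19} and \eqref{eq4.21}, and observing $[\eta f_a + \eta_t u]_{\alpha/\sigma,\alpha;Q_1} \le C(C_0 + \|u\|_{\alpha/\sigma,\alpha})$, one assembles \eqref{eq 1.211}. The principal technical obstacle is the region splitting at $|y|=\rho$ in the proof of \eqref{eq4.19}: both the high-order norm $[v]_{1+\alpha/\sigma,\sigma+\alpha}$ (for small $|y|$) and the low-order norm $[v]_{\alpha/\sigma,\alpha}$ together with the compact support of $v$ (for large $|y|$) are indispensable to extract a clean $\rho^\alpha$ factor, which is precisely why both appear on the right-hand side of \eqref{eq4.19}.
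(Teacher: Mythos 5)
Your treatments of \eqref{eq4.19} and \eqref{eq 1.211} follow the same decomposition as the paper (split of $g_a$ into a $\delta v$-difference term and a kernel-difference term, with a further split at $|y|=\rho$; freezing the kernel and invoking Theorem \ref{thm 1104.1} for the assembly), and are correct modulo routine imprecision (e.g.\ the pointwise bound $|\delta v|\le |y|^{\sigma+\alpha}[v]_{1+\alpha/\sigma,\sigma+\alpha}$ for small $|y|$ is not right as stated — the correct bound is $|\delta v|\le C|y|^2\|D^2v\|_{L_\infty}$, which is what the paper uses and then absorbs by interpolation).

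The estimate \eqref{eq4.21} for $h_a$, however, rests on a false premise. You assert that $\eta\equiv 1$ ``on a neighborhood of $Q_1$,'' conclude $h_a(t,x)=\int(1-\eta(t,x+y))u(t,x+y)K_a(t,x,y)\,dy$ on $Q_1$, and then claim the integrand is supported in $|y|\ge c_0$ so that ``no singularity of $K_a$ is encountered.'' But the cutoff defined above the lemma satisfies $\eta\equiv 1$ \emph{exactly} on $Q_1$, not on a strictly larger set. Consequently, for $(t,x)\in Q_1$ with $|x|$ close to $1$, one has $\eta(t,x+y)<1$ for arbitrarily small $|y|$, and the support of the integrand reaches down to $y=0$. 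The first reduction (using $\eta(t,x)=1$, $D\eta(t,x)=0$) is still valid on the open set $Q_1$, but the lower bound $c_0$ on $|y|$ is not, and the singularity of $K_a$ near $y=0$ \emph{is} encountered. The paper resolves this by applying the Leibniz-rule identity \eqref{eq 11.222} to the full integrand $\xi(t,x,y)$ — valid for all $(t,x)\in Q_1$, not just those with $D\eta=0$ — which exhibits the decay $|\xi(t,x,y)|\le C|y|^2(\|u\|_\infty+\|Du\|_\infty)$ for $|y|\le 1/8$, and then carries out the Hölder differencing of $\xi$ in $(t,x)$ separately on $\{|y|\le 1/8\}$ and $\{|y|>1/8\}$. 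Without some version of this quadratic-vanishing argument near $y=0$, the $L_\infty$ and Hölder bounds for $h_a$ on $Q_1$ cannot be obtained. (Your shortcut would be legitimate if one instead chose $\eta\equiv 1$ on, say, $Q_{9/8}$ and enlarged the other radii accordingly, as the paper itself does in Corollary \ref{cor 1125.1}; but then you are proving a slightly different statement with different domains, and should say so.)
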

\begin{proof}
For  $(t,x),(t',x')\in Q_1$, set $l=\max(|x-x^\prime|,|t-t^\prime|^{1/\sigma})$. Without loss of generality, we may assume that $l\le 1/4$.

\underline{Estimates of $g_{a}$:}
From the definition and the triangle inequality,
\begin{align*}
&\big|g_{a}(t,x)-g_{a}(t^\prime,x^\prime)\big|\\
&=\Big|\int_{\bR^d}\delta v(t,x,y)\big(K_a(t,x,y)-K_a(0,0,y)\big)\,dy\\
&\quad -\int_{\bR^d}\delta v(t^\prime,x^\prime,y)\big(K_a(t',x^\prime,y)-K_a(0,0,y)\big)\,dy\Big|\\
&\le\Big|\int_{\bR^d}\big(\delta v(t,x,y)-\delta v(t^\prime,x^\prime,y)\big)\big(K_a(t,x,y)-K_a(0,0,y)\big)\,dy\Big|\\
&\quad +\Big|\int_{\bR^d}\delta v(t^\prime,x^\prime,y)\big(K_a(t,x,y)-K_a(t',x^\prime,y)\big)\,dy\Big|\\
&=:\RN{1}+\RN{2}.
\end{align*}
Then we estimate  $\RN{1}$ and $\RN{2}$ separately. First,  similar to \eqref{eq 1104.2}, $\RN{1}$ is less than
\begin{align*}
&\int_{B_l}\Big|\big(\delta v(t,x,y)-\delta v(t^\prime,x^\prime,y)\big)\big(K_a(t,x,y)-K_a(0,0,y)\big)\Big|\,dy\\
&\quad+\int_{\bR^d\setminus B_l}\Big|\big(\delta v(t,x,y)-\delta v(t^\prime,x^\prime,y)\big)\big(K_a(t,x,y)-K_a(0,0,y)\big)\Big|\,dy
:=\RN{1}_1+\RN{1}_2.
%&+\int_{\bR^d\setminus B_{1}}|(\delta v(t,x,y)-\delta v(t^\prime,x^\prime,y))(K_a(t,x,y)-K_a(0,0,y))|\,dy\\
\end{align*}
%For $I_1$ and $I_2$, we assume $(t,x)\in Q_{2}$.  Otherwise, $I_1=I_2=0.$
Applying \eqref{eq 8.86}, we have
\begin{align*}
\RN{1}_1&\le C[v]_{1+\alpha/\sigma,\alpha+\sigma;Q_{5/4}} \int_{B_l}l^{\alpha+\sigma-2}|y|^2\big(K_a(t,x,y)-K_a(0,0,y)\big)\,dy\\
&\le  CA(2-\sigma)[v]_{1+\alpha/\sigma,\alpha+\sigma}l^{\alpha+\sigma-2}
\big(|x|^\alpha+|t|^{\alpha/\sigma}\big)\int_{B_l}|y|^2|y|^{-d-\sigma}\,dy\\
&= CAl^\alpha [v]_{1+\alpha/\sigma,\alpha+\sigma}.
\end{align*}
For $\RN{1}_2$,  we have
\begin{align*}
\RN{1}_2&\le C[v]_{1+\alpha/\sigma,\alpha+\sigma}l \int_{\bR^d\setminus B_l} |y|^{\sigma+\alpha-1}\big|K_a(t,x,y)-K_a(0,0,y)\big|\,dy\\
&\le CA(2-\sigma)[v]_{1+\alpha/\sigma,\alpha+\sigma}l^\alpha
\big(|x|^\alpha+|t|^{\alpha/\sigma}\big)
\le CAl^\alpha[v]_{1+\alpha/\sigma,\alpha+\sigma}.
\end{align*}
%Next, we estimate $I_3$ and for $(t,x)\notin Q_2$ we have
%\begin{equation*}
%I_3\le \int_{\bR^d\setminus B_{1}}|(v(t,x+y)-v(t^\prime,x^\prime+y))(K_a(x,y)-K_a(0,y))|\le Cl^\alpha [v]_{\alpha/\sigma,\alpha}.
%\end{equation*}
%For $(t,x)\in Q_2$, we have
%\begin{align*}
%&I_3\le \int_{\bR^d\setminus B_{1}}(2[v]_{\alpha/\sigma,\alpha}l^\alpha+\|D^2 v\|_{L_\infty}l|y|)(K_a(x,y)-K_a(0,y))\,dy \\
%&\le CA(l^\alpha[v]_{\alpha/\sigma,\alpha}+l\|D^2 v\|_{L_
%\infty})\\
%&\le CAl^\alpha([v]_{\alpha/\sigma,\alpha}+\|D^2 v\|_{L_\infty}).
%\end{align*}
%Combining the two cases above, we get
%\begin{equation*}
%I_3\le CAl^\alpha([v]_{\alpha/\sigma,\alpha,\bR^{d+1}}+\|D^2 v\|_{L_\infty})+Cl^\alpha[v]_{\alpha/\sigma,\alpha}.
%\end{equation*}
Next, we bound
\begin{align*}
\RN{2}&\le \int_{\bR^d\setminus B_{1}} \big([v]_{\alpha/\sigma,\alpha}|y|^{\alpha}+\|Dv\|_{L_\infty}|y|\big) \big|K_a(t,x,y)-K_a(t',x^\prime,y)\big|\,dy\\
&\quad +\int_{B_{1}}\|D^2 v\|_{L_\infty}|y|^2
\big|K_a(t,x,y)-K_a(t',x^\prime,y)\big|\,dy\\
&\le CAl^\alpha\big([v]_{\alpha/\sigma,\alpha}+\|Dv\|_{\infty}+\|D^2 v\|_{L_\infty}\big).
\end{align*}
Combining the estimates of $\RN{1}$, $\RN{2}$,  and the interpolation inequality, we get
\begin{equation*}
|g_{a}(t,x)-g_{a}(t^\prime,x^\prime)|\le C Al^\alpha  \big ([v]_{1+\alpha/\sigma,\alpha+\sigma}+[v]_{\alpha/\sigma,\alpha} \big),
\end{equation*}
which implies \eqref{eq4.19}.

\underline{Estimates of $h_a$:} For simplicity of notation, we denote
\begin{equation}
                            \label{eq11.32}
\xi(t,x,y)=\big(\eta(t,x)-\eta(t,x+y)\big)u(t,x+y)+y^TD\eta(t,x)u(t,x).
\end{equation}
By the Leibniz rule, we have
\begin{align}\nonumber
\xi(t,x,y)&=y^T\int_{0}^1\big(D\eta(t,x)u(t,x)-D\eta(t,x+sy)u(t,x+y)\big)\,ds\\
&=-\int_0^1\int_{0}^1\big(u(t,x)s y^TD^2 \eta(t,x+s'sy)y\nonumber\\
\label{eq 11.222}
&\quad +y^TD\eta (t,x+sy)y^TDu(t,x+s'y)
\big)\,ds'\,ds,
\end{align}
which implies that when $|y|\le 1/8$,
\begin{equation} \label{eq 11.2210}
|\xi(t,x,y)|\le C|y|^2\big(\|u\|_{L_\infty(Q_{11/8})}+\|D u\|_{L_\infty(Q_{11/8})}\big).
\end{equation}
On the other hand, clearly when $|y|\ge 1/8$,
\begin{equation}
|\xi(t,x,y)|
\le C\big(\|u\|_{L_\infty((-1,0)\times \bR^d)}+|y|\|u\|_{L_\infty(Q_1)}\big).\label{eq 12.261}
\end{equation}

Note that
\begin{align}\nonumber
&\big|h_a(t,x)-h_a(t',x')\big|\le \int_{\bR^d}\big|\xi(t,x,y)-\xi(t',x',y)\big|K_a(t,x,y)\,dy\\
&\quad +\int_{\bR^d}|\xi(t',x',y)|\big|K_a(t,x,y)-K_a(t',x',y)\big|\,dy
=:\RN{3}+\RN{4}. \label{eq 1125.1}
\end{align}
\underline{Estimate of $\RN{3}$:} By \eqref{eq 11.222} when $|y|\le 1/8$, we have
\begin{align*}
&\big|\xi(t,x,y)-\xi(t',x',y)\big|\\
&=|\int_0^1\int_0^1s\big(u(t',x')y^TD^2\eta(t',x'+ss'y)y-u(t,x)y^TD^2 \eta(t,x+ss'y)y\big)\,dx\,ds'\\
&\quad+\int_{0}^1\int_0^1\Big[y^TD\eta(t',x'+sy)y^TDu(t',x'+s'y)\\
&\qquad\qquad\qquad-y^TD\eta(t,x+sy)y^TDu(t,x+s'y)\Big]\,ds\,ds'|\\
&\le C|y|^2l^\alpha\big([u]_{\alpha/\sigma,\alpha;Q_{1}}+\|u\|_{L_\infty(Q_{1})}\big)
+C|y|^2l\big(\|D^2u\|_{L_\infty(Q_{11/8})}+\|D u\|_{L_\infty(Q_{11/8})}\big)\\
&\le C|y|^2l^\alpha\big(\|u\|_{L_\infty(Q_{11/8})}+\|D^2u\|_{L_\infty(Q_{11/8})}\big),
\end{align*}
where we used the interpolation inequalities in the last inequality.
On the other hand, when $|y|>1/8$,
\begin{equation*}
\big|y^TD\eta(t,x)u(t,x)-y^TD\eta(t',x')u(t',x')\big|\le |y|l^\alpha\|u\|_{\alpha/\sigma,\alpha;Q_{1}}
\end{equation*}
%which implies
%\begin{align}\nonumber
%&|\xi(t,x,y)-\xi(t',x',y)|\\ \nonumber
%&\le|(\eta(t,x)-\eta(t,x+y))u(t,x+y)-(\eta(t',x')-\eta(t',x'+y))u(t',x'+y)|\\
%&+ C|y|l^\alpha\|u\|_{\alpha/\sigma,\alpha;Q_{1}}.\label{eq 12.262}
%\end{align}
%Notice that
and
\begin{align*}
&\big|(\eta(t,x)-\eta(t,x+y))u(t,x+y)-(\eta(t',x')-\eta(t',x'+y))u(t',x'+y)\big|\\
&=\big| u(t,x+y)\big(\eta(t,x)-\eta(t',x')-\eta(t,x+y)+\eta(t',x'+y)\big)\\
&\quad +\big(\eta(t',x')-\eta(t',x'+y)\big)\big(u(t,x+y)-u(t',x'+y)\big)\big|\\
&\le C(l\|u\|_{L_\infty((-1,0)\times \bR^d)}+l^\alpha\|u\|_{\alpha/\sigma,\alpha;(-1,0)\times \bR^d}),
%&\le C|u(t,x+y)|l+Cl\|u\|_{L_\infty(Q_1)}+C(u(t,x+y)-u(t',x'+y))+Cl^\alpha[u]_{\alpha/\sigma,\alpha,Q_1}.
\end{align*}
which imply that when $|y|>1/8$,
\begin{align*}
&\big|\xi(t,x,y)-\xi(t',x',y)\big|\\
&\le C\big(l\|u\|_{L_\infty((-1,0)\times \bR^d)}+l^\alpha\|u\|_{\alpha/\sigma,\alpha;(-1,0)\times \bR^d}\big)
+C|y|l^\alpha\|u\|_{\alpha/\sigma,\alpha;Q_1}.
\end{align*}
Now with the above estimates,  we obtain
\begin{align*}
\RN{3}&\le \int_{B_{1/8}}C|y|^2l^\alpha \big(\|u\|_{L_\infty(Q_{11/8})}+\|D^2 u\|_{L_\infty(Q_{11/8})}\big)K_a(t,x,y)\,dy\\
&\quad +Cl^\alpha\|u\|_{\alpha/\sigma,\alpha;Q_1}
\int_{B_{1/8}^c}|y|K_a(t,x,y)\,dy\\
&\quad +C\big(l\|u\|_{L_\infty((-1,0)\times \bR^d)}+l^\alpha\|u\|_{\alpha/\sigma,\alpha;(-1,0)\times \bR^d}\big)
\int_{B_{1/8}^c}K_a(t,x,y)\,dy\\
&\le Cl^\alpha\big(\|D^2 u\|_{L_\infty(Q_{11/8})}+\|u\|_{\alpha/\sigma,\alpha;(-(11/8)^\sigma,0)\times \bR^d}\big).
\end{align*}

\underline{Estimate of $\RN{4}$:} By  \eqref{eq 11.2210} and \eqref{eq 12.261}, we have
%\begin{align*}
%|\xi(t',x',y)|\le C\big(\|u\|_{L_\infty}+|y|\|u\|_{L_\infty}\big),
%\end{align*}
%which, combines with \eqref{eq 11.2211}, yields
\begin{align*}
\RN{4}\le Cl^\alpha A\big(\|u\|_{L_\infty((-(11/8)^\sigma,0)\times \bR^d)}+\|D u\|_{L_\infty(Q_{11/8})}\big).
\end{align*}

The estimates of $\RN{3}$ and $\RN{4}$ with the interpolation inequalities give \eqref{eq4.21}.

Now we apply Theorem \ref{thm 1104.1} to $v$ with the estimates of $g_a$ and $h_a$ in Lemma \ref{lem4.5} to obtain
\begin{align*}\nonumber
&[v]_{1+\alpha/\sigma,\alpha+\sigma;Q_{1/2}}\le C\Big([v]_{1+\alpha'/\sigma,\alpha'+\sigma}
+A[v]_{1+\alpha/\sigma,\alpha+\sigma}+A[v]_{\alpha/\sigma,\alpha}\\
&\quad +(A+1)(\|u\|_{\alpha/\sigma,\alpha;(-(11/8)^\sigma,0)\times \bR^d}+\|D^2 u\|_{L_\infty(Q_{11/8})})+\sup_a[\eta f_a]_{\alpha/\sigma,\alpha;Q_1}\Big).
\end{align*}
Since $\eta\equiv 1$ in $Q_1$ and has compact support in $(-(5/4)^\sigma,(5/4)^\sigma)\times B_{5/4}$, we get \eqref{eq4.21}.
The lemma is proved.
\end{proof}

\begin{proof}[Proof of Theorem \ref{thm 1}]
%{\color{blue}First we assume in addition that $u\in C^{1+\alpha/\sigma,\sigma+\alpha}(Q_{11/8})\cap C^{\alpha/\sigma,\alpha}((-(11/8)^\sigma,0)\times\bR^d)$.}

We first use a scaling argument. For any $\epsilon>0$, set  $\hat{u}(t,x):=\epsilon^{-\sigma}u(\epsilon^\sigma t,\epsilon x)$. Since $u$ satisfies \eqref{eq 1110.1},  we have
%\begin{equation*}
%u_t(\epsilon^\sigma t,\epsilon x)=\inf_a \big\{L_au(\epsilon^\sigma t,\epsilon x)+f_a(\epsilon^\sigma t,\epsilon x)\big\}\quad \text{in}\quad Q_{2/\epsilon}.
%\end{equation*}
%Moreover, we rewrite the equation above in terms of $\hat{u}$ and obtain
\begin{equation*}
\hat{u}_t(t,x)=\inf_a\Big\{\int_{\bR^d}\delta \hat{u}(t,x,y)K^\epsilon_a(t,x,y)\,dy+f_a(\epsilon^\sigma t,\epsilon x)\Big\}\quad \text{in}\,\, Q_{1/\epsilon},
\end{equation*}
where
$$K^\epsilon_a(t,x,y)=\epsilon^{d+\sigma}K_a(\epsilon^\sigma t,\epsilon x,\epsilon y).$$
Clearly,
\begin{equation*}
\big|K_a^\epsilon(t,x,y)-K_a^\epsilon(t',x',y)\big|\le A(2-\sigma)\epsilon^\alpha\big(|x-x'|^\alpha+|t-t'|^{\alpha/\sigma}\big)\frac{\Lambda}{|y|^{d+\sigma}}.
\end{equation*}
Then we apply \eqref{eq 1.211} to $\hat{u}$ and get
\begin{align*}
[\hat{u}]_{1+\alpha/\sigma,\alpha+\sigma;Q_{1/2}}&\le C\Big([\hat{u}]_{1+\alpha'/\sigma,\alpha+\sigma;Q_{5/4}}+ C_0\epsilon^\alpha+ A\epsilon^\alpha[\hat{u}]_{1+\alpha/\sigma,\alpha+\sigma;Q_{5/4}}\\
&
+( A\epsilon^\alpha+1)\big(\|\hat{u}\|_{\alpha/\sigma,\alpha;(-(11/8)^\sigma,0)\times \bR^d}+\|D^2 \hat{u}\|_{L_\infty(Q_{11/8})}\big)\big).
\end{align*}
Returning back to $u$, we have
\begin{align*}
&[u]_{1+\alpha/\sigma,\sigma+\alpha;Q_{\epsilon/2}}\le C\Big(\epsilon^{\alpha'-\alpha}[u]_{1+\alpha'/\sigma,\alpha'+\sigma;Q_{5\epsilon/4}}
+C_0+A\epsilon^\alpha[u]_{1+\alpha/\sigma,\alpha+\sigma;Q_{5\epsilon/4}}\\
&\quad
+(A\epsilon^\alpha +1)\big(\epsilon^{-\sigma-\alpha}\|u\|_{\alpha/\sigma,\alpha;(-(11\epsilon/8)^\sigma,0)\times \bR^d}
+\epsilon^{2-\sigma-\alpha}\|D^2 u\|_{L_\infty(Q_{11\epsilon/8})}\big)\Big).
\end{align*}
By a translation of the coordinates, the inequality above holds for any $(t,x)\in Q_1$ for sufficiently small $\epsilon>0$
\begin{align}\nonumber
&[u]_{1+\alpha/\sigma,\sigma+\alpha;Q_{\epsilon/2}(t,x)}\\
                                \nonumber
&\le C\Big(\epsilon^{\alpha'-\alpha}[u]_{1+\alpha'/\sigma,\alpha'+\sigma;Q_{5\epsilon/4}(t,x)}+
C_0+A\epsilon^\alpha
[u]_{1+\alpha/\sigma,\alpha+\sigma;Q_{5\epsilon/4}(t,x)}\\ \label{eq 1.252}
&\quad +
(A\epsilon^\alpha+1)\big(\epsilon^{-\sigma-\alpha}
\|u\|_{\alpha/\sigma,\alpha;(t-(11\epsilon/8)^\sigma,t)\times \bR^d}+\epsilon^{2-\sigma-\alpha}\|D^2 u\|_{L_\infty(Q_{11\epsilon/8}(t,x))}\big)\Big).
\end{align}
Let $Q^k$ be defined in \eqref{eq 3.11}. It is obvious that $Q^k$ monotonically increases to $Q_1$. Then for any $(t,x),(s,y)\in Q^k$ such that $t\ge s$, we set $l:=\max(|t-s|^{1/\sigma},|x-y|)$. When $l\ge \epsilon/2$,
\begin{align*}
&\frac{|D^2u(t,x)-D^2u(s,y)|}{l^{\sigma+\alpha-2}}+\frac{|u_t(t,x)-u_t(s,y)|}{l^{\sigma+\alpha-2}}\\
&\le 2^{\sigma+\alpha-1}\epsilon^{2-\sigma-\alpha}
\big(\|u_t\|_{L_\infty(Q^k)}+\|D^2 u\|_{L_\infty(Q^k)}\big);
\end{align*}
when $l<\epsilon/2$,
\begin{align*}
\frac{|D^2u(t,x)-D^2u(s,y)|}{l^{\sigma+\alpha-2}}+\frac{|u_t(t,x)-u_t(s,y)|}{l^{\sigma+\alpha-2}}\le 2[u]_{1+\alpha/\sigma,\alpha+\sigma;Q_{\epsilon/2}(t,x)}.
\end{align*}
Now we choose $\epsilon=2^{-k-2}$ so that for any $(t,x)\in Q^k$, $Q_{11\epsilon/8}(t,x)\subset Q^{k+1}$ and $(t-(11\epsilon/8)^\sigma,t)\subset (-1,0)$.
Combining the two inequalities above with \eqref{eq 1.252}, we obtain
\begin{align}\nonumber
&[u]_{1+\alpha/\sigma,\alpha+\sigma;Q^k}\le 2^{(k+3)(\sigma+\alpha-2)+1}(\|u_t\|_{L_\infty(Q^k)}+\|D^2 u\|_{L_\infty(Q^k)})\\ \nonumber
&\quad+C\Big(2^{(k+2)(\alpha-\alpha')}[u]_{1+\alpha'/\sigma,\alpha'+\sigma;Q^{k+1}}
+C_0+2^{-(k+2)\alpha}A
[u]_{1+\alpha/\sigma,\alpha+\sigma;Q^{k+1}}\\ \label{eq 1.253}
&\quad+(2^{-(k+2)\alpha}A+1)\big(2^{(k+2)(\sigma+\alpha)}
\|u\|_{\alpha/\sigma,\alpha;(-1,0)\times \bR^d}\nonumber\\
&\quad+2^{(k+2)(\sigma+\alpha-2)}\|D^2 u\|_{L_\infty(Q^{k+1})}\big)\Big).
\end{align}
By the interpolation inequalities
\begin{align*}
&[u]_{1+\alpha^\prime/\sigma,\alpha^\prime+\sigma;Q^{k+1}}\le 2^{-2(k+2)(\alpha-\alpha')}[u]_{1+\alpha/\sigma,\alpha+\sigma;Q^{k+1}}+C2^{2(k+2)(\sigma+\alpha')}
\|u\|_{L_\infty},\\
&\|D^2u\|_{L_\infty(Q^{k+1})}+\|u_t\|_{L_\infty(Q^{k+1})}\\
&\le 2^{-2(k+1)(\sigma+\alpha-2)}[u]_{1+\alpha/\sigma,\alpha+\sigma;Q^{k+1}}+C2^{4(k+1)}
\|u\|_{L_\infty(Q^{k+1})},
\end{align*}
we reorganize the  right-hand side of \eqref{eq 1.253} to get
\begin{align*}
&[u]_{1+\alpha/\sigma,\alpha+\sigma;Q^k}\le C\Big(\big(2^{-(k+1)(\sigma+\alpha-2)}+2^{-(k+2)(\alpha-\alpha')}\big)
[u]_{1+\alpha/\sigma,\alpha+\sigma;Q^{k+1}}\\
&\quad+2^{5k(\sigma+\alpha)}\|u\|_{\alpha/\sigma,\alpha;(-1,0)\times \bR^d}
+C_0\Big),
\end{align*}
where $C$ depends on $A$. Obviously, $\sigma+\alpha<3$ and there exists a constant $k_0$ depends on $d$, $\sigma_0$, $\alpha$, $\lambda$, $\Lambda$, and $A$ such that $Q_{1/2}\subset Q^{k_0}$ and for any $k\ge k_0$,
$$
C\big(2^{-(k+1)(\sigma+\alpha-2)}+2^{-2(k+1)(\alpha-\alpha')}\big)<2^{-16}.
$$
Therefore, we have for any $k\ge k_0$,
\begin{align*}
[u]_{1+\alpha/\sigma,\alpha+\sigma;Q^{k}}\le 2^{-16}[u]_{1+\alpha/\sigma,\alpha+\sigma;Q^{k+1}}+C2^{15k}
\|u\|_{\alpha/\sigma,\alpha}+CC_0.
\end{align*}
%which by iteration further implies that for any $l\ge 1$
%\begin{align*}
%2^{-7l}[u]_{1+\alpha/\sigma,\alpha+\sigma;Q^{k_0+l}}\le 2^{-7(l+1)}[u]_{1+\alpha/\sigma,\alpha+\sigma;Q^{k_0+l+1}}+C2^{6k_0}2^{-l}
%\|u\|_{\alpha/\sigma,\alpha}+C2^{-7l}C_0.
%\end{align*}
We multiply both sides above by $2^{-16(k-k_0)}$ and then sum from $k=k_0$ to infinity and obtain that
\begin{align*}
[u]_{1+\alpha/\sigma,\alpha+\sigma;Q^{k_0}}\le C2^{15k_0}\|u\|_{\alpha/\sigma,\alpha;(-1,0)\times \bR^d}+CC_0.
\end{align*}
In particular,
\begin{equation*}
[u]_{1+\alpha/\sigma,\alpha+\sigma;Q_{1/2}}\le C(\|u\|_{\alpha/\sigma,\alpha;(-1,0)\times \bR^d}+C_0).
\end{equation*}
The proof is completed.
\end{proof}

\subsection{An improved estimate} By a more careful analysis, we obtain the following corollary when the kernels depend only on $y$.
\begin{corollary}\label{cor 1125.1}
Let $\sigma\in (0,2)$ and $0<\lambda\le \Lambda$. Assume that for any $a\in\mathcal{A}$, $K_a$ only depends on $y$. There is a constant $\hat{\alpha}\in (0,1)$ depending on $d$, $\sigma$, $\lambda$, and $\Lambda$ (uniformly as $\sigma\to 2$) so that the following holds. Let $\alpha\in (0,\hat{\alpha})$. % and  $[\sigma+\alpha]<\sigma+\alpha^\prime$.
 Suppose $u\in C^{1+\alpha/\sigma,\sigma+\alpha}(Q_1)\cap C^{\alpha/\sigma,\alpha}_{\text{loc}}([-1,0]\times\bR^d)$ is a solution of
\begin{equation*}
u_t=\inf_{a\in \mathcal{A}}(L_a u+f_a)\quad \text{in}\,\, Q_1.%\label{eq 1124.1}
\end{equation*}
Then,
\begin{align}
&[u]_{1+\alpha/\sigma,\alpha+\sigma,Q_{1/2}}\nonumber\\
&\le C[u]_{\alpha/\sigma,\alpha;(-1,0)\times B_2}+C\sum_{j=2}^\infty2^{-j\sigma}[u]_{\alpha/\sigma,\alpha;(-1,0)\times (B_{2^{j}}\setminus B_{2^{j-1}})}+CC_0,
                        \label{eq1.04}
\end{align}
where $C_0=\sup_a[f_a]_{\alpha/\sigma,\alpha; Q_{1}}$ and $C>0$ depends only on $d$, $\lambda$, $\Lambda$, $\alpha$, and $\sigma$, and is uniformly bounded as $\sigma\to 2$.
\end{corollary}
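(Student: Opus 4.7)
The plan is to mimic the proof of Theorem~\ref{thm 1} with the global norm $\|u\|_{\alpha/\sigma,\alpha;(-1,0)\times\bR^d}$ appearing on the right-hand side of \eqref{eq 12.142} replaced by the weighted annular sum in \eqref{eq1.04}. Subtracting $u(0,0)$ leaves both the equation and both sides of \eqref{eq1.04} invariant, so we may assume $u(0,0)=0$; consequently $\|u\|_{L_\infty((-1,0)\times B_R)}\le CR^\alpha[u]_{\alpha/\sigma,\alpha;(-1,0)\times B_R}$ for any bounded $R$. Setting $v=\eta u$ with a cutoff $\eta$ as in Lemma~\ref{lem4.5}, the translation invariance of $K_a$ in $(t,x)$ forces $A=0$ in \eqref{eq 11.051}, so the term $g_a$ of Lemma~\ref{lem4.5} vanishes identically and only the commutator $h_a=\int\xi(t,x,y)K_a(y)\,dy$ requires reexamination.

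The main obstacle is to refine \eqref{eq4.21} into an estimate of the form
\[
[h_a]_{\alpha/\sigma,\alpha;Q_1}\le C\|D^2u\|_{L_\infty(Q_{11/8})}+C[u]_{\alpha/\sigma,\alpha;(-1,0)\times B_2}+C\sum_{j=2}^\infty 2^{-j\sigma}[u]_{\alpha/\sigma,\alpha;(-1,0)\times(B_{2^j}\setminus B_{2^{j-1}})}.
\]
Since $K_a$ is translation invariant, the term $\RN{4}$ from Lemma~\ref{lem4.5} vanishes and only $\RN{3}=\int|\xi(t,x,y)-\xi(t',x',y)|K_a(y)\,dy$ contributes. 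For $|y|\le 1/8$ the Taylor-based bound in the proof of Lemma~\ref{lem4.5} still gives a contribution controlled by $\|D^2 u\|_{L_\infty(Q_{11/8})}$ (with the $\|u\|_{L_\infty(Q_{11/8})}$ piece already dominated by $[u]_{\alpha/\sigma,\alpha;(-1,0)\times B_2}$ via the normalization). For $|y|>1/8$ one decomposes dyadically into shells $\{2^{j-1}<|y|\le 2^j\}$, $j\ge 1$. Writing $l=\max(|x-x'|,|t-t'|^{1/\sigma})\le 1$ and splitting
\[
\xi(t,x,y)-\xi(t',x',y)=(1-\eta(t,x+y))[u(t,x+y)-u(t',x'+y)]+[\eta(t',x'+y)-\eta(t,x+y)]u(t',x'+y),
\]
one observes that for $j\ge 2$ both $x+y$ and $x'+y$ lie in $B_{2^{j+1}}\setminus B_{2^{j-2}}$; since $|x-x'|\le 1$, the segment joining them crosses at most one dyadic boundary $\{|z|=2^k\}$, and a two-step chain argument gives
\[
|u(t,x+y)-u(t',x'+y)|\le Cl^\alpha\sum_{k=j-1}^{j+1}[u]_{\alpha/\sigma,\alpha;(-1,0)\times(B_{2^k}\setminus B_{2^{k-1}})}.
\]
The $\eta$-difference terms are supported in $|y|\lesssim 1$ and absorbed by $[u]_{\alpha/\sigma,\alpha;(-1,0)\times B_2}$. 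Combined with $\int_{2^{j-1}<|y|\le 2^j}K_a(y)\,dy\le C 2^{-j\sigma}$ and a reshuffling of the double sum over $j$ and $k$, this produces the displayed bound on $[h_a]$.

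Given this improved version of Lemma~\ref{lem4.5}, the rest of the argument mirrors the proof of Theorem~\ref{thm 1} verbatim: apply Theorem~\ref{thm 1104.1} to $v$, then iterate over the nested cylinders $Q^k$ defined in \eqref{eq 3.11}, using interpolation to absorb both $\|D^2 u\|_{L_\infty}$ and the $C^{1+\alpha'/\sigma,\alpha'+\sigma}$ semi-norm into the left-hand side. The annular weighted sum enters the right-hand side of each iteration step only through a bounded multiplicative constant, so the geometric iteration (which is governed by the factor $2^{-16}$ as in the proof of Theorem~\ref{thm 1}) converges and delivers \eqref{eq1.04}. The constants are independent of $A$ because $A=0$, and are uniformly bounded as $\sigma\to 2$ for the same reasons as in Theorem~\ref{thm 1}.
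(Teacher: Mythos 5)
Your proposal follows the paper's strategy exactly: cut off $u$ to get a compactly supported $v$, note that translation invariance of $K_a$ kills $g_a$, bound the commutator $[h_a]_{\alpha/\sigma,\alpha;Q_1}$ by the annular H\"older sum via a dyadic decomposition of the $y$-integral, and finish with the Schauder a priori estimate. It is correct. The paper's execution is lighter in two respects worth noting: (i) it chooses $\eta\equiv 1$ on $Q_{5/4}$ rather than on $Q_1$, so for $(t,x)\in Q_1$ one has $D\eta(t,x)=0$ and $\eta(t,x+y)=1$ whenever $|y|<1/4$, hence $\xi(t,x,y)\equiv 0$ for small $|y|$ and the $\|D^2u\|_{L_\infty}$ term never arises; and (ii) after a standard scaling/covering reduction to $u\in C^{1+\alpha/\sigma,\sigma+\alpha}(Q_2)$ with the equation holding in $Q_2$, it applies Theorem~\ref{thm 1} (not Theorem~\ref{thm 1104.1}) once to the compactly supported $v$, so the annular sum enters only through $[h_a]$ and through $\|v\|_{\alpha/\sigma,\alpha}\le C\|u\|_{\alpha/\sigma,\alpha;(-1,0)\times B_2}$, rather than being re-derived at every scale of the $Q^k$ iteration. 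Your phrase ``apply Theorem~\ref{thm 1104.1} to $v$'' needs the same reduction, since with $\eta$ supported outside $Q_1$ the function $v=\eta u$ is not globally $C^{1+\alpha'/\sigma,\alpha'+\sigma}$; and if instead you rerun the iteration with a cutoff at each scale $2^{-k-2}$, the rescaled annular sum acquires a factor of order $2^{k(\sigma+\alpha)}$, which is still absorbed by the $2^{-16k}$ geometric weight but takes an extra computation to check. Finally, a tiny correction: a segment of length $\le 2$ joining $x+y$ and $x'+y$ can cross two dyadic spheres when $j=2$, not just one, so the chain should be allowed three links; this changes nothing.
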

\begin{proof}
Since the proof is quite similar to the proof of Theorem \ref{thm 1}, we only provide a sketch here. By a standard scaling and covering argument, we may assume that
$u\in C^{1+\alpha/\sigma,\sigma+\alpha}(Q_2)\cap C^{\alpha/\sigma,\alpha}_{\text{loc}}([-2,0]\times\bR^d)$ and the equation is satisfied in $Q_2$. Let $\eta$ be a cutoff function such that $\eta\in C_0^
\infty((-2^\sigma,2^\sigma)\times B_2)$ and $\eta\equiv 1$ in $Q_{5/4}$. Let $v=\eta u$, which satisfies
%\begin{align*}
%v_t=\inf_a(\eta L_a u+\eta f_a+\eta_t u),
%\end{align*}
%and
%\begin{align*}
%\eta L_a u=L_av+h_a,
%\end{align*}
%which implies that
\begin{align*}
v_t=\inf_a(L_av+h_a+\eta f_a+\eta_t u),
\end{align*}
where
$$
h_a=\int_{\bR^d}\xi(t,x,y)K_a(y)\,dy
$$
and $\xi$ is defined in \eqref{eq11.32}.
It is sufficient to estimate $[h_a]_{\alpha/\sigma,\alpha;Q_{1}}$. Since $K_a$ only depends on $y$, it follows that
\begin{align*}
|h_a(t,x)-h_a(t^\prime,x')|=\RN{3},
\end{align*}
where $\RN{3}$ is defined in \eqref{eq 1125.1}.
The estimate is similar to the one in the proof of Lemma \ref{lem4.5}. For any $(t,x),(t',x')\in Q_1$, since $\eta\equiv 1$ in $Q_{5/4}$, $D\eta(t,x)=D\eta(t',x')=0$.
When $|y|\le 1/4$, $\xi(t,x,y)=0$;
When $|y|>1/4$, we have
 \begin{align*}
&\big|\xi(t,x,y)-\xi(t',x',y)\big|\\
&=\big|(\eta(t,x)-\eta(t,x+y))u(t,x+y)+y^TD\eta(t,x)u(t,x)\\
&\quad -(\eta(t',x')-\eta(t',x'+y))u(t',x'+y)+y^TD\eta(t',x')u(t',x')\big|\\
&\le \big|\eta(t,x+y)u(t,x+y)-\eta(t',x'+y)u(t',x'+y)\big|
+\big|u(t,x+y)-u(t',x'+y)\big|.
\end{align*}
Recall that $l = \max\{|x-x^\prime|,|t-t^\prime|^{1/\sigma}\}$. Combining the estimate above, we obtain
\begin{align*}
&\RN{3}=\int_{B_{1/4}^c}\big|\xi(t,x,y)-\xi(t',x',y)\big|K_a(y)\,dy\\
&\le Cl^\alpha\|u\|_{\alpha/\sigma,\alpha;(-1,0)\times B_2}+\int_{B_{1/4}^c}\big|u(t,x+y)-u(t',x'+y)\big|K_a(y)\,dy\\
&\le Cl^\alpha\|u\|_{\alpha/\sigma,\alpha;(-1,0)\times B_2}+\sum_{j=-1}^\infty\int_{B_{2^j}\setminus B_{2^{j-1}}}|u(t,x+y)-u(t',x'+y)|K_a(y)\,dy\\
&\le Cl^\alpha\|u\|_{\alpha/\sigma,\alpha;(-1,0)\times B_2}+Cl^\alpha\sum_{j=-1}^\infty 2^{-j\sigma}[u]_{\alpha/\sigma,\alpha;(-1,0)\times B_{2^{j+1}}}\\
&\le Cl^\alpha\Big(\|u\|_{\alpha/\sigma,\alpha;(-1,0)\times B_2}+\sum_{j=1}^\infty 2^{-j\sigma}[u]_{\alpha/\sigma,\alpha;(-1,0)\times(B_{2^j}\setminus B_{2^{j-1}})}\Big),
\end{align*}
%Therefore, we obtain
%\begin{align*}
%III\le Cl^\alpha(\|u\|_{\alpha/\sigma,\alpha;(-1,0)\times B_2})+Cl^\alpha\sum_{j=0}^\infty[u]_{\alpha/\sigma,\alpha;(-1,0)\times(B_{2^{j+1}}\setminus B_{2^{j-2}})}2^{-j\sigma}.
%\end{align*}
which implies that
\begin{equation*}
[h_a]_{\alpha/\sigma,\alpha;Q_{1}}\le C\Big(\|u\|_{\alpha/\sigma,\alpha;(-1,0)\times B_2}+\sum_{j=1}^\infty 2^{-j\sigma}[u]_{\alpha/\sigma,\alpha;(-1,0)\times(B_{2^j}\setminus B_{2^{j-1}})}\Big).
\end{equation*}
Then we apply Theorem \ref{thm 1} to $v$ and obtain
\begin{align*}
&[v]_{1+\alpha/\sigma,\sigma+\alpha;Q_{1/2}}
\le C\Big(\|v\|_{\alpha/\sigma,\alpha;(-1,0)\times \bR^d}+\|u\|_{\alpha/\sigma,\alpha;(-1,0)\times B_2}\\
&\quad+\sum_{j=1}^\infty 2^{-j\sigma}[u]_{\alpha/\sigma,\alpha;(-1,0)\times(B_{2^j}\setminus B_{2^{j-1}})}+C_0\Big).
\end{align*}
Combining the fact that $\eta\equiv 1$ in $Q_{5/4}$ and replacing $u$ by $u-u(0,0)$, we reach \eqref{eq1.04}.
Therefore, the proof is completed.
\end{proof}

\section{Equations with bounded inhomogeneous terms}

In this section, we present an application of Corollary \ref{cor 1125.1} to nonlocal parabolic equations with merely bounded nonhomogeneous terms:
\begin{equation}
u_t=\inf_{a\in \mathcal{A}}(L_au+f_a),\label{eq 1125.3}
\end{equation}
where $\sup_a\|f_a\|_{L_\infty}<\infty$ and
\begin{equation*}
L_au(x)=\int_{\bR^d}\delta u(t,x,y)K_a(y)\,dy.
\end{equation*}
Before proving Theorem \ref{thm 2}, we first present an interpolation inequality involving the Zygmund semi-norm. The proof can be found in the appendix
\begin{lemma}\label{lemma 12.31}
Let $\alpha\in (0,1)$ and $f\in \Lambda^1((-1,0))\cap L_\infty((-1,0))$. Then we have $f\in C^\alpha((-1,0))$ and
\begin{equation}
                                    \label{eq5.01}
[f]_{\alpha;(-1,0)}\le C\|f\|_{L_\infty((-1,0))}+C[f]_{\Lambda^1((-1,0))},
\end{equation}
where $C$ depends only on $\alpha$.
\end{lemma}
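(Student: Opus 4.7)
My plan is to prove the lemma by the classical dyadic scale-doubling argument for Zygmund functions, combined with the observation that the borderline modulus $h\log(1/h)$ is dominated by $h^\alpha$ for any $\alpha<1$. Fix $t_1<t_2$ in $(-1,0)$ and set $h:=t_2-t_1$; the goal is to bound $|f(t_2)-f(t_1)|$ by $C_\alpha h^\alpha(\|f\|_{L_\infty}+[f]_{\Lambda^1})$. The case $h\ge 1/4$ is immediate from $|f(t_1)-f(t_2)|\le 2\|f\|_{L_\infty}\le 2\cdot 4^{\alpha}h^{\alpha}\|f\|_{L_\infty}$, so I focus on small $h$.

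The heart of the argument is a one-step linear recurrence obtained from the Zygmund condition at dyadic scales. Setting $A_k:=f(t_1+2^k h)-f(t_1)$, the inequality $|f(s+\rho)+f(s-\rho)-2f(s)|\le \rho\,[f]_{\Lambda^1}$ applied at $s=t_1+2^k h$ with increment $\rho=2^k h$ produces
\begin{equation*}
A_{k+1}=2A_k+r_k,\qquad |r_k|\le 2^k h\,[f]_{\Lambda^1}.
\end{equation*}
Inverting this recurrence yields $A_0 = 2^{-k}A_k-\sum_{j=0}^{k-1}2^{-(j+1)}r_j$, and hence
\begin{equation*}
|f(t_2)-f(t_1)|\;=\;|A_0|\;\le\; 2^{1-k}\|f\|_{L_\infty}+\tfrac{kh}{2}[f]_{\Lambda^1}.
\end{equation*}
Taking $k$ to be the largest integer with $2^{k+1}h\le 1/2$ forces $2^{-k}\le 4h$ and $k\le \log_2(1/h)+1$, which gives $|f(t_2)-f(t_1)|\le Ch\|f\|_{L_\infty}+Ch\log(1/h)[f]_{\Lambda^1}$. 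Since $h\log(1/h)\le C_\alpha h^\alpha$ for every $\alpha\in(0,1)$ and $h\in(0,1)$, the conclusion \eqref{eq5.01} follows.

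The main obstacle I anticipate is the boundary restriction: the Zygmund identity at $s$ with increment $\rho$ requires $s\pm\rho\in(-1,0)$, so we cannot iterate indefinitely. This is handled by the elementary observation that either $t_1\le -\tfrac12$ or $t_2> -\tfrac12$. In the first case the recurrence above is valid for every $k$ with $t_1+2^{k+1}h\le 0$, which is ensured as long as $2^{k+1}h\le 1/2$. In the second case I run the symmetric recurrence $B_{k+1}=2B_k+r_k$ with $B_k:=f(t_2-2^k h)-f(t_2)$, this time requiring $t_2-2^{k+1}h\ge -1$, which is again ensured by $2^{k+1}h\le 1/2$. In either case the cutoff $k\sim\log_2(1/h)$ is admissible, so the argument of the previous paragraph applies and completes the proof.
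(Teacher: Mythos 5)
Your proof is correct, and it is a genuinely different argument from the paper's. The paper proceeds by a single-step absorption: write $f(x)-f(y)$ in terms of a second difference plus a first difference at double the scale, bound the second difference by $[f]_{\Lambda^1}$, and conclude
\begin{equation*}
[f]_{\alpha}\le 2\cdot 3^\alpha\|f\|_{L_\infty}+\tfrac{1}{2^{1-\alpha}}[f]_{\alpha}+\tfrac{3^{\alpha-1}}{2}[f]_{\Lambda^1},
\end{equation*}
which can be rearranged since $2^{\alpha-1}<1$. This absorption requires $[f]_\alpha<\infty$ a priori, which the paper secures via a mollification preamble. You instead iterate the Zygmund inequality across roughly $\log_2(1/h)$ dyadic scales via the recurrence $A_{k+1}=2A_k+r_k$, stopping when the reach $2^{k+1}h$ approaches the size of the interval, and conclude the logarithmic modulus $|f(t_2)-f(t_1)|\le Ch\|f\|_{L_\infty}+Ch\log(1/h)[f]_{\Lambda^1}$, which dominates $h^\alpha$ for each $\alpha<1$. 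Your route has two advantages: it needs no a priori finiteness of $[f]_\alpha$ (hence no mollification), and it actually yields the sharper modulus $h\log(1/h)$, which is the natural endpoint estimate for $\Lambda^1$. The boundary handling via the dichotomy $t_1\le -1/2$ or $t_1>-1/2$ (the latter forcing $t_2>-1/2$) is exactly what is needed to keep the recurrence inside $(-1,0)$; one could tighten the strictness of a couple of inequalities (e.g.\ $t_1+2^{k+1}h<0$), but that is cosmetic. Note also that both proofs give constants blowing up like $(1-\alpha)^{-1}$ as $\alpha\to1$, reflecting $\Lambda^1\supsetneq C^{0,1}$.
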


\begin{comment}
\begin{lemma}\label{lemma 12.161}
Assume that $u\in \Lambda^1((-\infty,0])$ and $u(0)=0$.  Let $\tilde{u}$ be the odd extension of $u$. Then
\begin{equation*}
[\tilde{u}]_{\Lambda^1(\bR)}\le 3[u]_{\Lambda^1((-\infty,0))}.
\end{equation*}
\end{lemma}
\begin{proof}
By the definition, we need to estimate
\begin{equation}
h^{-1}\big|\tilde{u}(x+h)+\tilde{u}(x-h)-2\tilde{u}(x)\big|\label{eq 12.181},
\end{equation}
where $h>0$.
Clearly, when $x+h,x,x-h\in (-\infty,0)$, or $x+h,x-h,x\in (0,\infty)$,
\eqref{eq 12.181} is bounded by $ [u]_{\Lambda^1(-\infty,0)}$. Since $\tilde{u}$ is the odd extension of $u$, when $x=0$ $\tilde u(h)+\tilde u(-h)=0$. It remains to consider the case that these $x+h>0$ and $x,x-h<0$, or $x+h,x>0$ and $x-h<0$. Without loss of generality, we assume that $x+h>0$ and $x,x-h<0$, which implies $x\in(-h,0)$.  Then
\begin{align*}
&\tilde{u}(x+h)+\tilde{u}(x-h)-2\tilde{u}(x)=-u(-x-h)+u(x-h)-2u(x)\\
%&=u(-x-h)+u(x-h)-2u(-h)-2u(-x-h)+2u(-h)-2u(x)\\
&=(u(-x-h)+u(x-h)-2u(-h))-2(u(-x-h)+u(x)-2u(-h/2))\\
&\quad +2(u(-h)+u(0)-2u(-h/2))\\
&\le [u]_{\Lambda^1(-\infty,0)}(|x|+2|x+h/2|+2|h/2|)\le 3[u]_{\Lambda^1(-\infty,0)}h.
\end{align*}
Therefore, the lemma is proved.
\end{proof}
\end{comment}

In the sequel, we set
$$
[u]^t_{\Lambda^1}:=[u]^t_{\Lambda^1(\bR^{d+1}_0)},\quad [u]^\ast_{\sigma}:=[u]^\ast_{\sigma;\bR^{d+1}_0}, \quad \text{and}\quad [Du]^t_{\frac{\sigma-1}{\sigma}}=:[Du]^t_{\frac{\sigma-1}{\sigma};\bR^{d+1}_0}.
$$

Let $\eta$ be a smooth even nonnegative function in $\bR$ with unit integral and vanishing outside $(-1,1)$. For $R>0$, we define the mollification of $u$ with respect to $t$ as
$$
u^{(R)}(t,x)=\int_{\bR} \big(2u(t-R^\sigma s,x)-u(t-2R^\sigma s,x)\big)\eta(s-1)\,ds.
$$

The following lemmas will also be used in our proof. We present their proofs in the appendix.

\begin{lemma}
                            \label{lem5.2}
Let $\beta\in (0,1]$ and $R>0$. Then we have
\begin{equation}
                                    \label{eq6.58}
[\partial_t u^{(R)}]^t_{\beta;\bR^{d+1}_0}\le C(\beta)R^{-\beta\sigma}[u]^t_{\Lambda^1}.
\end{equation}
\end{lemma}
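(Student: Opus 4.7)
The plan is to reduce to the case $R=1$ by a scaling argument, then establish the uniform bound $\|\partial_t^2 u^{(1)}\|_{L_\infty}\le C[u]^t_{\Lambda^1}$ by a careful integration by parts that exploits the evenness of $\eta$, and finally conclude the Hölder estimate by combining this Lipschitz bound with a direct argument for large increments.

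First I would scale: setting $v(t,x) := u(R^\sigma t, x)$, a direct change of variables shows $v^{(1)}(t,x) = u^{(R)}(R^\sigma t, x)$, with $[v]^t_{\Lambda^1} = R^\sigma [u]^t_{\Lambda^1}$ and $[\partial_t v^{(1)}]^t_\beta = R^{\sigma(1+\beta)}[\partial_t u^{(R)}]^t_\beta$; thus \eqref{eq6.58} reduces to showing $[\partial_t u^{(1)}]^t_\beta \le C(\beta)[u]^t_{\Lambda^1}$.

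Next I would establish $\|\partial_t^2 u^{(1)}\|_{L_\infty(\bR^{d+1}_0)} \le C[u]^t_{\Lambda^1}$. Differentiating $u^{(1)}$ twice in $t$ and using $\partial_t [u(t-as,x)] = -a^{-1}\partial_s[u(t-as,x)]$ for $a\in\{1,2\}$, I integrate by parts twice in $s$ to move both derivatives onto $\eta$, obtaining
\[
\partial_t^2 u^{(1)}(t,x) = 2\!\int u(t-s,x)\eta''(s-1)\,ds - \tfrac14\!\int u(t-2s,x)\eta''(s-1)\,ds.
\]
In each integral I would change variables so $\eta''$ is centered at $0$ ($r=s-1$ in the first, $r=s/2-1$ in the second, together with the substitution $s = 2(r+1)$ in the second term to align the mollifier), then use the evenness of $\eta$ (hence of $\eta''$) to symmetrize the integrand under $r\mapsto -r$. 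Since $\int\eta''(r)\,dr=0$, each integral rewrites as $\int \Delta^2_{r}u(\tau_0,x)\eta''(r)\,dr$, where $\Delta^2_r u(\tau,x) := u(\tau+r,x)+u(\tau-r,x)-2u(\tau,x)$ and $\tau_0$ is an appropriate base point. The Zygmund bound $|\Delta^2_r u(\tau_0,x)| \le |r|\,[u]^t_{\Lambda^1}$ combined with the integrability of $r\,\eta''(r)$ then yields the desired $L_\infty$ estimate.

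The case $\beta=1$ is now immediate from $[\partial_t u^{(1)}]^t_1 = \|\partial_t^2 u^{(1)}\|_{L_\infty}$. For $\beta\in(0,1)$, short increments $|h|\le 1$ are handled by the mean value theorem, giving $|h|^{-\beta}|\partial_t u^{(1)}(t+h,x)-\partial_t u^{(1)}(t,x)| \le C|h|^{1-\beta}[u]^t_{\Lambda^1}$. Long increments $|h|>1$ are treated by writing the difference as $\int u(t-s,x)\,[K'(s+h)-K'(s)]\,ds$, where $K(s)=2\eta(s-1)-\tfrac12\eta(s/2-1)$ is the convolution kernel with $u^{(1)} = u*K$; the shifted-difference kernel $K'(\cdot+h)-K'(\cdot)$ has two vanishing moments, so a symmetrization analogous to the one above, together with the mixed second-difference identity $u(a+b)+u(a-b)-u(a+c)-u(a-c) = \Delta^2_b u(a)-\Delta^2_c u(a)$, controls the integrand by $[u]^t_{\Lambda^1}$ and yields the estimate for $|h|>1$.

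The main obstacle is the $L_\infty$-bound on $\partial_t^2 u^{(1)}$: producing a representation as an integral of centered second differences requires threading together the integration by parts, the two distinct change-of-variable scalings ($r=s-1$ vs.\ $r=s/2-1$), and the evenness of $\eta$. The specific coefficients $2$ and $-1$ in the definition of $u^{(R)}$ are essential: they are chosen precisely so that the first-order-in-$s$ contribution in the Taylor expansion of $2u(t-R^\sigma s)-u(t-2R^\sigma s)-u(t)$ vanishes, leaving a genuinely second-order quantity to which the Zygmund bound applies.
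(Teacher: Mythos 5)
Your scaling reduction to $R=1$ and the integration-by-parts computation for $\|\partial_t^2 u^{(1)}\|_{L_\infty}$ are correct and match the paper's argument for the $\beta=1$ case (the paper does the computation directly at general $R$, but the content is the same: integrate by parts twice, use evenness of $\eta$ and $\int\eta''=0$ to symmetrize and produce centered second differences). The short-increment case $|h|\le 1$ via the mean value theorem is also fine.

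The gap is in your treatment of long increments $|h|>1$. You claim that the kernel argument with the mixed second-difference identity ``controls the integrand by $[u]^t_{\Lambda^1}$.'' If one carries out the symmetrization you describe, the integrand reduces to $\Delta_b^2 u(m) - \Delta_c^2 u(m)$ with $|b|,|c|\approx|h|/2$; the Zygmund bound $|\Delta_b^2 u(m)|\le |b|[u]^t_{\Lambda^1}$ then only gives $\lesssim |h|[u]^t_{\Lambda^1}$, which, after dividing by $|h|^\beta$ with $\beta<1$, is unbounded. Your assertion of a uniform bound cannot hold: if $u(t)\sim t\log|t|$ (a genuine $\Lambda^1$ function), then $\partial_t u^{(1)}(t)\sim\log|t|$ has unbounded oscillation. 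What actually saves the estimate is the hidden near-symmetry $b\approx -c$ (one checks $|b+c|=|s-s_0|\lesssim 1$ while $|b-c|=|h|$), and since $\Delta_c^2 u=\Delta_{-c}^2 u$, one is really comparing two second differences at nearby arm lengths. A telescoping argument then yields $|\Delta_b^2 u - \Delta_{-c}^2 u|\lesssim [u]^t_{\Lambda^1}\,|b+c|\,(1+\log(|h|/|b+c|))\lesssim[u]^t_{\Lambda^1}\log|h|$, and $\log|h|/|h|^\beta$ is bounded. This extra logarithmic refinement is essential, and your proposal does not supply it. The paper sidesteps the whole issue by instead bounding the second-difference quotient $r^{-1-\beta}|\Delta_r^2 u^{(R)}|$ (trivially $\le r^{-\beta}[u]^t_{\Lambda^1}$ for $r\ge R^\sigma$, since $[u^{(R)}]^t_{\Lambda^1}\lesssim[u]^t_{\Lambda^1}$, and via the $\partial_t^2$ bound for $r<R^\sigma$), and then invoking the standard identification $[\partial_t f]^t_\beta\sim\sup_r r^{-1-\beta}\|\Delta_r^2 f\|_{L_\infty}$ valid for $\beta\in(0,1)$. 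Either repair that gap with the logarithmic telescoping estimate, or switch to the paper's Zygmund-equivalence route, which is cleaner.
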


\begin{lemma}
                                \label{lem5.5}
Let $\sigma\in (1,2)$, $\alpha\in (0,1)$, and $R>0$ be constants. Assume that  $u$ defined on $\bR^{d+1}_0$ is $C^\sigma$ in $x$, $\Lambda^1$ in $t$, and $Du$ is $C^{(\sigma-1)/\sigma}$ in $t$.
Let $p=p(t,x)$ be the first-order Taylor expansion of $u^{(R)}$ at the origin.
Then for any integer $j\ge 0$, we have
\begin{align}\nonumber
&[u-p]^\ast_{\alpha;(-R^\sigma,0)\times  B_{2^jR}}\\
&\le C(2^jR)^{\sigma-\alpha}[u]^\ast_{\sigma}+C2^{(1-\alpha)j}R^{\sigma-\alpha}
[Du]^t_{\frac{\sigma-1}{\sigma}}+C2^{-j\alpha}R^{\sigma-\alpha}[u]^t_{\Lambda^1} \label{eq 12.62}
\end{align}
and
\begin{align}\nonumber
 [u-p]^t_{\alpha/\sigma;(-R^\sigma,0)\times B_{2^jR}}&\le C2^{j(\sigma-\alpha/2)}
R^{\sigma-\alpha}[u]^\ast_{\sigma}+C2^{j(\sigma+1-\alpha)/2}R^{\sigma-\alpha}
[Du]^t_{\frac{\sigma-1}{\sigma}}\\
&\quad
+C2^{j(\sigma-\alpha/2)}R^{\sigma-\alpha}[u]^t_{\Lambda^1}
 \label{eq 12.61},
 \end{align}
 where $C>0$ is a constant depending only on $d$, $\sigma$, and $\alpha$.
\end{lemma}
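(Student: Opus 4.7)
The plan is to decompose $u - p = A + B$ with $A := u - u^{(R)}$ and $B := u^{(R)} - p$ and bound each piece. First I collect a few preliminary estimates. From the identity
\[
u - u^{(R)} = \int \bigl( u(t,x) + u(t - 2R^\sigma s, x) - 2u(t - R^\sigma s, x) \bigr) \eta(s-1)\,ds,
\]
the Zygmund bound on $u$ in $t$ yields $\|A\|_{L^\infty} \le CR^\sigma [u]^t_{\Lambda^1}$. Commuting the $x$-gradient with the integral and invoking the $t$-H\"older control of $Du$ gives $\|DA\|_{L^\infty} \le CR^{\sigma-1}[Du]^t_{(\sigma-1)/\sigma}$, and the same computation with $Du$ in place of $u$ produces $\|D\partial_t u^{(R)}\|_{L^\infty} \le CR^{-1}[Du]^t_{(\sigma-1)/\sigma}$. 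Lemma \ref{lem5.2} with $\beta = 1$ and $\beta = \alpha/\sigma$ supplies $\|\partial_t^2 u^{(R)}\|_{L^\infty} \le CR^{-\sigma}[u]^t_{\Lambda^1}$ and $[\partial_t u^{(R)}]^t_{\alpha/\sigma} \le CR^{-\alpha}[u]^t_{\Lambda^1}$.

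For the spatial bound \eqref{eq 12.62}, on $(-R^\sigma, 0)\times B_{2^jR}$ the Lipschitz estimate on $A$ directly gives $[A]^*_\alpha \le \|DA\|_{L^\infty}(2^{j+1}R)^{1-\alpha} \le C2^{j(1-\alpha)}R^{\sigma-\alpha}[Du]^t_{(\sigma-1)/\sigma}$. For $B$, I write $B(t,x) - B(t,y) = \int_0^1 \bigl( Du^{(R)}(t, y+s(x-y)) - Du^{(R)}(0, 0) \bigr) \cdot (x-y)\,ds$ and bound $|Du^{(R)}(t, z) - Du^{(R)}(0, 0)|$ for $z \in B_{2^jR}$ by a triangle inequality through $Du(t, z)$, $Du(t, 0)$, and $Du(0, 0)$: the two mollification errors are each $\le CR^{\sigma-1}[Du]^t_{(\sigma-1)/\sigma}$, the spatial $Du$-difference $|Du(t, z) - Du(t, 0)|$ is $\le (2^jR)^{\sigma-1}[u]^*_\sigma$, and the temporal $Du$-difference $|Du(t, 0) - Du(0, 0)|$ is $\le R^{\sigma-1}[Du]^t_{(\sigma-1)/\sigma}$. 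Multiplying by $|x-y|$ and dividing by $|x-y|^\alpha \le (2^{j+1}R)^{1-\alpha}$ produces the two main terms of \eqref{eq 12.62}; the $[u]^t_{\Lambda^1}$ term is then a free allowance.

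For the temporal bound \eqref{eq 12.61}, the $A$-contribution is handled by the rescaled version of Lemma \ref{lemma 12.31} on $(-R^\sigma, 0)$ applied to $A(\cdot, x)$, using $\|A\|_{L^\infty} \le CR^\sigma[u]^t_{\Lambda^1}$ together with the Zygmund control $[A(\cdot, x)]^t_{\Lambda^1} \le C[u]^t_{\Lambda^1}$ (obtained by combining the raw Zygmund seminorm of $u$ with the quadratic bound $|u^{(R)}(t+h) + u^{(R)}(t-h) - 2u^{(R)}(t)| \le \|\partial_t^2 u^{(R)}\|_{L^\infty}\,h^2$), yielding $[A]^t_{\alpha/\sigma} \le CR^{\sigma-\alpha}[u]^t_{\Lambda^1}$. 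For $B$, the smoothness of $u^{(R)}$ in $t$ and a second-order Taylor expansion at $\tau = s$ give
\[
B(t, x) - B(s, x) = (t-s)\bigl(\partial_t u^{(R)}(s, x) - \partial_t u^{(R)}(0, 0)\bigr) + O\bigl((t-s)^2 R^{-\sigma}[u]^t_{\Lambda^1}\bigr);
\]
after dividing by $|t-s|^{\alpha/\sigma}$ and using $|t-s| \le R^\sigma$, the quadratic remainder contributes $CR^{\sigma-\alpha}[u]^t_{\Lambda^1}$. The linear factor $\partial_t u^{(R)}(s, x) - \partial_t u^{(R)}(0, 0)$ is then split as a purely temporal increment $\partial_t u^{(R)}(s, 0) - \partial_t u^{(R)}(0, 0)$ (controlled by $[\partial_t u^{(R)}]^t_{\alpha/\sigma}|s|^{\alpha/\sigma} \le C[u]^t_{\Lambda^1}$) plus the spatial increment $\partial_t u^{(R)}(s, x) - \partial_t u^{(R)}(s, 0)$. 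Setting $\psi(\rho, x) := u(\rho, x) - u(\rho, 0) - Du(\rho, 0)\cdot x$ and letting $K$ be the $t$-convolution kernel of the mollification, one has the identity
\[
\partial_t u^{(R)}(s, x) - \partial_t u^{(R)}(s, 0) = D\partial_t u^{(R)}(s, 0)\cdot x + \int K'(\xi)\bigl(\psi(s-\xi, x) - \psi(s, x)\bigr) d\xi,
\]
where $\psi$ obeys the twin bounds $|\psi(\rho, x)| \le C[u]^*_\sigma|x|^\sigma$ (Taylor remainder in $x$) and $|\psi(\rho, x) - \psi(\tau, x)| \le C[Du]^t_{(\sigma-1)/\sigma}|\rho-\tau|^{(\sigma-1)/\sigma}|x|$ (mixed $t$-difference of the remainder). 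Applying $\min(a,b)\le a^\theta b^{1-\theta}$ to these two controls of $|\psi(s-\xi,x)-\psi(s,x)|$ against $|K'(\xi)|$ with exponent $\theta \in [0,1]$ chosen so that the resulting prefactor $|x|^{1+(\sigma-1)\theta}R^{-1-(\sigma-1)\theta} = 2^{j(1+(\sigma-1)\theta)}$ interpolates between the target powers $2^{j(\sigma+1-\alpha)/2}$ and $2^{j(\sigma-\alpha/2)}$, and then invoking Young's inequality $a^\theta b^{1-\theta} \le \theta a + (1-\theta) b$, produces the required contributions $C2^{j(\sigma-\alpha/2)}[u]^*_\sigma + C2^{j(\sigma+1-\alpha)/2}[Du]^t_{(\sigma-1)/\sigma}$; multiplying by $R^{\sigma-\alpha}$ finishes \eqref{eq 12.61}.

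The principal obstacle is this last interpolation step. The pure Lipschitz estimate $|x|\,\|D\partial_t u^{(R)}\|_{L^\infty}$ alone yields the coefficient $2^j[Du]^t_{(\sigma-1)/\sigma}$, which is too weak relative to the target $2^{j(\sigma+1-\alpha)/2}$ in the regime $\sigma < 1 + \alpha$; only by simultaneously trading part of this bound against the higher-order $x$-regularity encoded in $[u]^*_\sigma$ via the Taylor remainder $\psi$ does one recover the fractional exponents asserted in \eqref{eq 12.61}.
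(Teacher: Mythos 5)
Your spatial estimate \eqref{eq 12.62} is correct; after the decomposition $u-p=(u-u^{(R)})+(u^{(R)}-p)$ the bounds on each piece match, and the route is comparable to (if a bit more hands-on than) the paper's, which packages the same information via the interpolation inequality $[u-p]^\ast_{\alpha}\le (2^jR)^{-\alpha}\|u-p\|_{L_\infty}+(2^jR)^{\sigma-\alpha}[u-p]^\ast_{\sigma}$ plus a direct $L_\infty$ bound on $u-p$.

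Your treatment of the temporal estimate \eqref{eq 12.61}, however, has a genuine gap exactly in the regime $\sigma<1+\alpha$ that you flag. In your identity the term $D\partial_t u^{(R)}(s,0)\cdot x$ appears as a separate additive piece; with the sharp bound $\|D\partial_t u^{(R)}\|_{L_\infty}\lesssim R^{-1}[Du]^t_{\frac{\sigma-1}{\sigma}}$ and $|x|\le 2^jR$, this piece is of size $2^j[Du]^t_{\frac{\sigma-1}{\sigma}}$, and none of your interpolation on the $\psi$-integral touches it. Moreover, the interpolation itself cannot work: your prefactor is $2^{j(1+(\sigma-1)\theta)}$, which is $\ge 2^j$ for every $\theta\in[0,1]$, i.e.\ interpolating only makes the exponent larger. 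After Young's inequality both terms carry the common prefactor $2^{j(1+(\sigma-1)\theta)}$, so matching the target requires $1+(\sigma-1)\theta\le(\sigma+1-\alpha)/2$ (the smaller of the two target exponents), that is $\theta\le(\sigma-1-\alpha)/(2(\sigma-1))$, which is negative precisely when $\sigma<1+\alpha$. So the step that you single out as the crux does not in fact close the argument.

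The paper's resolution is conceptually different and is what you are missing. Rather than differentiating $u^{(R)}-p$ in $t$ on the thin cylinder $(-R^\sigma,0)\times B_{2^jR}$, it first \emph{enlarges the time interval} to $(-2^{j\sigma/2}R^\sigma,0)$, so that the cylinder $(-2^{j\sigma/2}R^\sigma,0)\times B_{2^jR}$ has the balanced parabolic radii $2^{j/2}R$ in time and $2^jR$ in space. It then bounds $\|u-p\|_{L_\infty}$ on this larger cylinder (the $[Du]^t_{\frac{\sigma-1}{\sigma}}$ contribution becomes $2^{j(\sigma+1)/2}R^\sigma$) and applies the Zygmund-to-H\"older interpolation of Lemma \ref{lemma 12.31}, which divides by the enlarged time radius to the $\alpha$ power, i.e.\ by $(2^{j/2}R)^{\alpha}$. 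The resulting factor $2^{-j\alpha/2}$ is exactly what converts $2^{j(\sigma+1)/2}$ into the target exponent $2^{j(\sigma+1-\alpha)/2}$, with no interpolation in $\theta$ and no restriction on the sign of $\sigma-1-\alpha$. You should replace the Taylor-in-$t$ argument for $B=u^{(R)}-p$ with this enlargement-plus-Lemma-\ref{lemma 12.31} step.
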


%\begin{theorem}
%Let $\sigma\neq1$. Assume that $u\in C^{\sigma}$ in $x$ variable, $\Lambda^1$ in $t$ variable,  and satisfies  \eqref{eq 1125.3} in $\bR^{d+1}_0$. Then there exists a constant $C$ depending on $d,\lambda,\Lambda,\sigma$ such that when $\sigma>1$
%\begin{equation*}
%[u]^t_{\Lambda^1}+[u]^*_{\sigma}+[u]^t_{\frac{\sigma-1}{\sigma}}\le C\sup_a\|f_a\|_{L_\infty};
%\end{equation*}
%when $\sigma<1$
%\begin{equation*}
%[u]^t_{\Lambda^1}+[u]^*_{\sigma}\le C\sup_a\|f_a\|_{L_\infty}.
%\end{equation*}
%\end{theorem}

In the case when $\sigma=1$, we define $u^{(R)}$ differently. Let $\zeta\in C_0^\infty(B_1)$ be a radial nonnegative function with unit integral. For $R>0$, we define
$$
u^{(R)}(t,x)=\int_{\bR^{d+1}} \big(2u(t-Rs,x-Ry)-u(t-2Rs,x-Ry)\big)\eta(s-1)\zeta(y)\,dy\,ds.
$$
\begin{lemma}
                                \label{lem5.5b}
Let $\alpha\in (0,1)$, and $R>0$ be constants. Assume that  $u$ defined on $\bR^{d+1}_0$ is $\Lambda^1$ in $(t,x)$.
Let $p=p(t,x)$ be the first-order Taylor expansion of $u^{(R)}$ at the origin.
Then for any integer $j\ge 0$, we have
\begin{equation}
[u-p]_{\alpha,\alpha;(-R,0)\times  B_{2^jR}}
\le C2^{j(1-\alpha/2)}R^{1-\alpha}[u]_{\Lambda^1}, \label{eq 12.62b}
\end{equation}
%and
%\begin{equation}
%[u-p]^t_{\alpha;(-R,0)\times B_{2^jR}}
%\le C2^{j(1-\alpha/2)}R^{1-\alpha}[u]_{\Lambda^1}
% \label{eq 12.61b},
%\end{equation}
where $C>0$ is a constant depending only on $d$ and $\alpha$.
\end{lemma}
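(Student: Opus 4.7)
I would adapt the strategy of Lemma \ref{lem5.5} to the joint $(t, x)$ Zygmund assumption. The convolution representation
\[
u^{(R)}(t, x) = \int K_R(t - \tau, x - \xi)\, u(\tau, \xi)\,d\tau\,d\xi
\]
holds with $K_R(s, y) = H_R(s)\, R^{-d} \zeta(y/R)$ and $H_R(s) = (2/R)\eta(s/R - 1) - (1/(2R))\eta(s/(2R) - 1)$. A direct computation gives $\int K_R = 1$, $\int s\, K_R = 0$, and $\int y_j\, K_R = 0$, so $u^{(R)}$ reproduces linear polynomials. The first step is to establish
\[
\|u - u^{(R)}\|_{L^\infty(\bR^{d+1}_0)} \le C R [u]_{\Lambda^1}, \qquad \|\partial^\beta u^{(R)}\|_{L^\infty(\bR^{d+1}_0)} \le C R^{-1} [u]_{\Lambda^1} \ \text{for}\ |\beta| = 2.
\]
Both bounds follow by exploiting (i) the radial symmetry of $\zeta$ to turn spatial first differences of $u$ into symmetric $x$-Zygmund second differences, and (ii) the decomposition $H_R = 2\tilde\eta_R - \tilde\eta_{2R}$, where $\tilde\eta_R$ is a mollifier centered at time $R$, which turns the remaining time integrals into Zygmund second differences of the form $u(t, x) + \bar u_{2R}(t, x) - 2 \bar u_R(t, x)$, bounded by $C R [u]_{\Lambda^1}$. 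Analogous moment/symmetry cancellations applied to $\partial^\beta K_R$ give the bound on second derivatives.

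Next I would split $u - p = (u - u^{(R)}) + (u^{(R)} - p)$ and estimate each piece on $(-R, 0) \times B_{2^jR}$. For the first summand, combining $\|u - u^{(R)}\|_\infty \le CR [u]_{\Lambda^1}$ and the preserved Zygmund seminorm $[u - u^{(R)}]_{\Lambda^1} \le C[u]_{\Lambda^1}$ with a scaled multi-dimensional version of Lemma \ref{lemma 12.31}, applied at the parabolic length $L = 2^j R$, yields
\[
[u - u^{(R)}]_{\alpha, \alpha; (-R, 0) \times B_{2^jR}} \le C\, 2^{j(1-\alpha)} R^{1-\alpha} [u]_{\Lambda^1},
\]
which is dominated by the claimed bound. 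For the second summand, since $p$ is the first-order Taylor expansion of $u^{(R)}$ at the origin, Taylor's theorem together with the second-derivative bound from Step
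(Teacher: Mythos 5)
Your setup (convolution kernel $K_R$, moment cancellations, the $L^\infty$ bound $\|u - u^{(R)}\|_\infty \le CR[u]_{\Lambda^1}$, the treatment of the first summand) is sound, and indeed the first piece $[u-u^{(R)}]_{\alpha,\alpha;(-R,0)\times B_{2^jR}}\lesssim 2^{j(1-\alpha)}R^{1-\alpha}[u]_{\Lambda^1}$ is dominated by the claim. The gap is in the second summand: if you bound $|u^{(R)}-p|$ via Taylor's theorem with the \emph{second-derivative} bound $\|D^2u^{(R)}\|_{L^\infty}\lesssim R^{-1}[u]_{\Lambda^1}$, you get $\|u^{(R)}-p\|_{L^\infty(Q_{2^jR})}\lesssim (2^jR)^2R^{-1}[u]_{\Lambda^1}=2^{2j}R[u]_{\Lambda^1}$, and hence $[u^{(R)}-p]_{\alpha,\alpha;(-R,0)\times B_{2^jR}}\lesssim 2^{j(2-\alpha)}R^{1-\alpha}[u]_{\Lambda^1}$. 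This exceeds the target $2^{j(1-\alpha/2)}R^{1-\alpha}[u]_{\Lambda^1}$ by a factor of $2^{j(1-\alpha/2)}$, and this loss is fatal downstream: in the proof of Theorem \ref{thm 2} for $\sigma=1$, the estimate is fed into a sum of the form $\sum_{j\ge 2}2^{-j}R^{-1}[u-p]_{\alpha,\alpha;(-R,0)\times(B_{2^jR}\setminus B_{2^{j-1}R})}$, and $\sum_j 2^{-j}\cdot 2^{j(2-\alpha)}=\sum_j 2^{j(1-\alpha)}$ diverges for the small $\alpha$ under consideration.

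What the paper does instead is use a \emph{fractional} Taylor remainder: by an interpolation of the kind appearing in Lemma \ref{lem5.2} (with $\beta=\alpha/2$), one shows that the global parabolic seminorm $[u^{(R)}]_{1+\alpha/2,1+\alpha/2}$ is bounded by $CR^{-\alpha/2}[u]_{\Lambda^1}$, \emph{independently of $j$}, and then $|u^{(R)}(t,x)-p(t,x)|\le (2^jR)^{1+\alpha/2}[u^{(R)}]_{1+\alpha/2,1+\alpha/2;Q_{2^jR}}\le 2^{j(1+\alpha/2)}R[u]_{\Lambda^1}$. The crucial point is that the interpolation between the $C^2$-type bound and the $\Lambda^1$ bound must be performed at the fixed scale $R$ (where the crossover happens), \emph{before} Taylor's theorem is applied at the large scale $2^jR$. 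Applying $C^2$ Taylor directly at scale $2^jR$ --- which is what you are proposing --- loses the $2^{j(1-\alpha/2)}$ factor. So you either need to replace your $C^2$ bound by the $C^{1+\alpha/2}$ bound on $u^{(R)}$, or (equivalently, and closer to the paper's presentation) interpolate $[u-p]_{\alpha,\alpha;Q_{2^jR}}\le (2^jR)^{-\alpha}\|u-p\|_{L^\infty(Q_{2^jR})}+(2^jR)^{1-\alpha}[u-p]_{\Lambda^1}$ first, note that $[u-p]_{\Lambda^1}=[u]_{\Lambda^1}$ since $p$ is linear, and then estimate $\|u-p\|_\infty$ by adding and subtracting $u^{(R)}$ with the $C^{1+\alpha/2}$ Taylor bound on $u^{(R)}-p$.
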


Define $\cP_0$ to be the set of first-order polynomials of $t$, and $\cP_1$ to be the set of first-order polynomials of $(t,x)$.
\begin{lemma}
                            \label{lem5.4}
(i) When $\sigma\in (0,1)$, we have
\begin{equation*}
 [u]^t_{\Lambda^1}+[u]^\ast_{\sigma}\le C\sup_{r>0}\sup_{(t,x)\in \bR_0^{d+1}} r^{-\sigma}\inf_{p\in \mathcal{P}_0}\|u-p\|_{L_\infty(Q_r(t,x))},%\label{eq 2.42a}
 \end{equation*}
where $C>0$ is a constant depending only on $d$ and $\sigma$.

(ii) When $\sigma\in (1,2)$, we have
\begin{equation*}
 [u]^t_{\Lambda^1}+[u]^\ast_{\sigma}+[Du]^t_{\frac{\sigma-1}{\sigma}}\le C\sup_{r>0}\sup_{(t,x)\in \bR_0^{d+1}} r^{-\sigma}\inf_{p\in \mathcal{P}_1}\|u-p\|_{L_\infty(Q_r(t,x))},%\label{eq 2.42}
 \end{equation*}
where $C>0$ is a constant depending only on $d$ and $\sigma$.

(iii) We have
\begin{equation*}
 [u]_{\Lambda^1}\le C\sup_{r>0}\sup_{(t,x)\in \bR_0^{d+1}} r^{-1}\inf_{p\in \mathcal{P}_1}\|u-p\|_{L_\infty(Q_r(t,x))},%\label{eq 2.42}
 \end{equation*}
where $C>0$ is a constant depending only on $d$.
\end{lemma}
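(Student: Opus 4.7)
Denote the right-hand side supremum by $M$. For every basepoint $X_0 = (t_0, x_0) \in \bR^{d+1}_0$ and every radius $r > 0$, I pick a near-minimizer $p^{X_0}_r$ in the relevant class ($\cP_0$ for (i), $\cP_1$ for (ii) and (iii)) satisfying
\[
\|u - p^{X_0}_r\|_{L_\infty(Q_r(X_0))} \le 2 M r^\sigma.
\]
The unified idea is: (a) Zygmund bounds exploit that for any $p \in \cP_1$ (resp.\ $\cP_0$), the symmetric second difference $p(X+H) + p(X-H) - 2 p(X)$ vanishes identically in $(t,x)$ (resp.\ in $t$ alone); (b) the H\"older estimates on $Du$ in part (ii) come from a Campanato-type dyadic iteration together with comparison of approximating polynomials at distinct nearby basepoints.

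Parts (i) and (iii) are direct. For (iii), given $X \in \bR^{d+1}_0$ and $H \in \bR^{d+1}$, I choose a cylinder $Q_r(X_1)$ of radius $r = 3|H|$ containing $X, X \pm H$ (shifting $X_1$ upward in time as needed). Since $p := p^{X_1}_r \in \cP_1$ is affine, $p(X+H) + p(X-H) - 2 p(X) = 0$, and the triangle inequality yields $|u(X+H) + u(X-H) - 2 u(X)| \le 8 M r = C M |H|$. Part (i) is analogous: the temporal Zygmund bound uses the same argument with $r = (2h)^{1/\sigma}$ and the vanishing of the time second difference of any $p \in \cP_0$; the spatial H\"older estimate $[u]^*_\sigma \le CM$ is immediate since $\cP_0$ polynomials are constant in $x$, so $|u(t,x) - u(t,y)| \le 4 M r^\sigma = C M |x-y|^\sigma$ upon taking $r = 2|x-y|$.

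For part (ii), I expand $p^{X_0}_r(t,x) = \alpha^{X_0}_r + \beta^{X_0}_r \cdot (x - x_0) + \gamma^{X_0}_r (t - t_0)$. Comparing the same basepoint at scales $r$ and $r/2$, the difference $p^{X_0}_r - p^{X_0}_{r/2}$ is affine with $L_\infty$-norm $\le 4 M r^\sigma$ on $Q_{r/2}(X_0)$; evaluating it at the center and at appropriately chosen boundary points of this parabolic cylinder yields
\[
|\alpha^{X_0}_r - \alpha^{X_0}_{r/2}| \le CM r^\sigma, \quad |\beta^{X_0}_r - \beta^{X_0}_{r/2}| \le CM r^{\sigma-1}, \quad |\gamma^{X_0}_r - \gamma^{X_0}_{r/2}| \le CM.
\]
Since $\sigma > 1$, the first two geometric series sum, so $\alpha^{X_0}_r \to \alpha^*(X_0)$ and $\beta^{X_0}_r \to \beta^*(X_0)$ as $r \to 0^+$ at rates $CM r^\sigma$ and $CM r^{\sigma-1}$ respectively. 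One identifies $\alpha^*(X_0) = u(X_0)$ by evaluating the approximation at $X_0$, and $\beta^*(X_0) = Du(X_0)$ via a standard difference quotient argument (taking $r$ comparable to the spatial increment and using $\sigma > 1$). The coefficient $\gamma^{X_0}_r$ need not converge, but this is harmless: $[u]^t_{\Lambda^1} \le CM$ follows directly from the second-difference cancellation as in (iii).

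The H\"older estimates on $Du$ then follow from comparing approximations at distinct basepoints: $p^{X_0}_r - p^{X_0'}_r$ is an affine polynomial in $(t,x)$ whose spatial gradient equals $\beta^{X_0}_r - \beta^{X_0'}_r$, and its $L_\infty$-norm is $\le 4 M r^\sigma$ on $Q_r(X_0) \cap Q_r(X_0')$. For $X_0' = (t_0, y_0)$ with $|x_0 - y_0| = h$, choosing $r = 4h$ makes the intersection contain a parabolic sub-cylinder of spatial radius $\sim r$, and extracting the linear coefficient gives $|\beta^{X_0}_r - \beta^{X_0'}_r| \le CM r^{\sigma-1} = CM h^{\sigma-1}$; combined with the convergence rate, this yields $[u]^*_\sigma \le CM$. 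For $X_0' = (s_0, x_0)$ with $|t_0 - s_0| = \tau$, choosing $r = (4\tau)^{1/\sigma}$ keeps the time intervals overlapping on a subinterval of length $\sim \tau$ while retaining the full spatial ball $B_r$; the same extraction gives $|\beta^{X_0}_r - \beta^{X_0'}_r| \le CM r^{\sigma-1} = CM \tau^{(\sigma-1)/\sigma}$, and combining with the convergence rate gives $[Du]^t_{(\sigma-1)/\sigma} \le CM$. The main technical subtlety throughout is the parabolic scaling: separating the bounds on the coefficients of an affine function requires evaluation at carefully chosen points, and the matching of spatial scale $r$ to temporal scale $r^\sigma$ is precisely what produces the sharp $(\sigma-1)/\sigma$ temporal H\"older exponent.
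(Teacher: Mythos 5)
Your proof is correct and takes essentially the same route as the paper. The Zygmund estimates follow from the second-difference cancellation of linear polynomials, which is exactly the one display the paper writes out; for $[u]^*_\sigma$ and $[Du]^t_{(\sigma-1)/\sigma}$ the paper simply cites the ``standard'' Campanato characterization to Krylov's book \cite[Section 3.3]{Kry97}, whereas you carry out that dyadic iteration in full (coefficient comparison at scales $r$ and $r/2$, convergence of $\alpha_r,\beta_r$ using $\sigma>1$, and cross-basepoint comparison with the parabolic scaling $r^\sigma\sim\tau$ producing the exponent $(\sigma-1)/\sigma$). So the content is identical; you are proving the lemma the paper delegates to a reference.
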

\begin{proof}
The estimates of $[u]^\ast_{\sigma}$ and $[Du]^t_{\frac{\sigma-1}{\sigma}}$ are standard. See, for instance, \cite[Section 3.3]{Kry97}. We only consider $[u]^t_{\Lambda^1}$. For any polynomial $p$ which is linear in $t$, by the triangle inequality,
 \begin{align*}%\nonumber
 &\big|u(t+s,x)+u(t-s,x)-2u(t,x)\big|\\
 &=\big|u(t+s,x)-p(t+s,x)+u(t-s,x)-p(t-s,x)-2(u(t,x)-p(t,x))\big|\\
&\le 4\|u-p\|_{L_\infty(Q_r(t+s,x))},
\end{align*}
where $r^\sigma=2s$. %Therefore,
%\begin{equation*}
%|u(t+s,x)+u(t-s,x)-2u(t,x)|\le 4\|u-p\|_{L_\infty(Q_r(t,x))}.
%\end{equation*}
Since $p$ is arbitrary, the inequality above implies that
\begin{equation*}
\big|u(t+s,x)+u(t-s,x)-2u(t,x)\big|\le 8sr^{-\sigma}\inf_p\|u-p\|_{L_\infty(Q_r(t+s,x))}.
\end{equation*}
Similarly, we can prove Assertion (iii). The lemma is proved.
\end{proof}

\begin{proof}[Proof of Theorem \ref{thm 2}]
We only treat the case when $\sigma\ge 1$. For the case when $\sigma<1$, the proof is almost the same with minor modifications.

%We extend $u$ to $\{t>0\}$ by defining
%\begin{equation*}
%u(t,x)=2u(0,x)-u(-t,x)\quad\text{for}\,\,t>0
%\end{equation*}
%By Lemma \ref{lemma 12.161},
%\begin{equation*}%\label{eq 1221.1}
%[u]^t_{\Lambda^1(\bR^{d+1})}\le C[u]^t_{\Lambda^1}.
%\end{equation*}

First we assume $\sigma>1$. Let $\hat\alpha$ be the constant in Corollary \ref{cor 1125.1} and $\alpha\in (0,\hat\alpha)$ be such that $\alpha+\sigma\le 2$.
Let $R>0$ be a constant, $p$ be defined as in Lemma \ref{lem5.5}, and $c_0=\partial_t p$. Let
$K\ge 2\|u-p\|_{L_\infty(Q_{2R})}$ be a constant to be specified later
and denote
$$
g_K=\max\big(\min(u-p,K),-K\big).
$$
Clearly, $g_K\in C^{\alpha/\sigma,\alpha}$ and any $C^{\alpha/\sigma,\alpha}$ norm (or semi-norm) of $g_K$ is less than or equal to that of $u-p$.

Let $v_K$ be the solution to
\begin{equation}
                                \label{eq10.54}
\begin{cases}
 \partial_t v_K=\inf_a(L_a v_K-c_0)\quad &\text{in}\,\, Q_{2R},\\
v_K=g_K \quad&\text{in} \,\, \bR^{d+1}_0\setminus Q_{2R}.
\end{cases}
\end{equation}
Noting that $g_k$ is H\"older continuous in both $t$ and $x$, the solvability follows from Theorem \ref{thm 1} and a regularization argument; see \cite{CS11, Serra15}.
%By the translation of the coordinates, we assume that $(t_0,x_0)=(0,0)$.
We apply  Corollary  \ref{cor 1125.1} to $v_K$ with a scaling to get
\begin{align}\nonumber
&[v_K]_{1+\alpha/\sigma,\alpha+\sigma;Q_{R/2}}\\
 &\le C\Big(R^{-\sigma}[v_K]_{\alpha/\sigma,\alpha;(-R^\sigma,0)\times B_{2R}}+\sum_{j=2}^\infty 2^{-j\sigma}R^{-\sigma}[v_K]_{\alpha/\sigma,\alpha; (-R^\sigma,0)\times (B_{2^{j}R}\setminus B_{2^{j-1}R})}\Big)\nonumber\\
 &\le C\Big(R^{-\sigma}[v_K]_{\alpha/\sigma,\alpha;(-R^\sigma,0)\times B_{2R}}\nonumber\\
 &\qquad\quad+\sum_{j=2}^\infty 2^{-j\sigma}R^{-\sigma}[u-p]_{\alpha/\sigma,\alpha;(-R^\sigma,0)\times(B_{2^jR}\setminus B_{2^{j-1}R})}\Big),
                            \label{eq 1126.1}
\end{align}
where in the last equality we used the fact that $v_K=g_K$ in $(-(2R)^\sigma,0)\times B_{2R}^c$ and the H\"older norm of $g_K$ is less than the H\"older norm of $u-p$.

By Lemma \ref{lem5.5},
\begin{align}
&[u-p]_{\alpha/\sigma,\alpha;(-R^\sigma,0)\times B_{2^jR}}
\le C2^{j(\sigma-\alpha/2)}R^{\sigma-\alpha}[u]^t_{\Lambda^1}\nonumber\\
&\quad +C2^{j(\sigma-\alpha/2)}R^{\sigma-\alpha}
[u]^\ast_{\sigma}+C2^{j(\sigma+1-\alpha)/2}R^{\sigma-\alpha}
[Du]^t_{\frac{\sigma-1}{\sigma}},       \label{eq1.56}
\end{align}
which together with \eqref{eq 1126.1} gives
\begin{align}
&[v_K]_{1+\alpha/\sigma,\alpha+\sigma;Q_{R/2}}\nonumber\\
 &\le C\big(R^{-\sigma}[v_K]_{\alpha/\sigma,\alpha;(-R^\sigma,0)\times B_{2R}}+R^{-\alpha}[u]^t_{\Lambda^1}+R^{-\alpha}[u]^\ast_{\sigma}
 +R^{-\alpha}[Du]^t_{\frac{\sigma-1}{\sigma}}\big).\label{eq 12.121}
\end{align}

Next we estimate $w_K:=g_K-v_K$, which is equal to $u-p-v_K$ in $Q_{2R}$ by the choice of $K$. By \eqref{eq 1125.3} and \eqref{eq10.54}, $w_K$ satisfies
\begin{equation*}
\begin{cases}
 \partial_t w_K\le \cM^+w_K+h_K+\sup_{a\in \cA}\|f_a\|_{L_\infty}\quad &\text{in}\,\, Q_{2R},\\
 \partial_t w_K\ge \cM^-w_K+\hat{h}_K-\sup_{a\in \cA}\|f_a\|_{L_\infty}\quad &\text{in}\,\, Q_{2R},\\
 w_K=0\quad &\text{in}\,\, \bR^{d+1}_0\setminus Q_{2R},
\end{cases}
\end{equation*}
where
$$
h_K:= \cM^+(u-p-g_K),\quad \hat{h}_K:=\cM^-(u-p-g_K).
$$
By the dominated convergence theorem, it is not hard to see that
$$
\|h_K\|_{L_\infty(Q_{2R})},\,\,\|\hat{h}_K\|_{L_\infty(Q_{2R})}\to 0
\quad \text{as} \quad K\to \infty.
$$
We then fix $K$ large enough so that
$$
\|h_K\|_{L_\infty(Q_{2R})}+\|\hat{h}_K\|_{L_\infty(Q_{2R})}\le \sup_{a\in \cA}\|f_a\|_{L_\infty}.
$$
%Applying the Alexandrov-Bakelman-Pucci estimate for nonlocal elliptic equation, see for instance \cite[Section 4]{ChangD152},
% \begin{equation}
%                                \label{eq10.57}
% \|w\|_{L_\infty(Q_{2R})}\le CR^\sigma\sup_a\|f_a\|_{L_\infty},
% \end{equation}
From Lemma \ref{lemma 12.271}, we have
\begin{equation}\label{eq 12.67}
\|w_K\|_{L_\infty(Q_{2R})}\le CR^\sigma\sup_{a\in \cA}\|f_a\|_{L_\infty},\quad
[w_K]_{\alpha/\sigma,\alpha;Q_{2R}}\le CR^{\sigma-\alpha}\sup_{a\in \cA}\|f_a\|_{L_\infty},
\end{equation}
where $C$ depends on $d,\sigma,\lambda$, and $\Lambda$.

Now let $q_K$ be the first-order Taylor expansion of $v_K$ at the origin.
Then by \eqref{eq 12.121}, for any $r\in (0,R/2)$,
 \begin{align} \nonumber
 &\|u-p-q_K\|_{L_\infty(Q_r)}
\le \|u-p-v_K\|_{L_\infty(Q_r)}+\|v_K-q_K\|_{L_\infty(Q_r)}\\ \nonumber
 &\le \|u-p-v_K\|_{L_\infty(Q_r)}
+Cr^{\sigma+\alpha} R^{-\sigma}[v_K]_{\alpha/\sigma,\alpha;(-R^\sigma,0)\times B_{2R}}\\
&\quad+Cr^{\sigma+\alpha}R^{-\alpha}\big([u]^t_{\Lambda^1}+[u]^\ast_{\sigma}+
 [Du]^t_{\frac{\sigma-1}{\sigma}}\big).
                                 \label{eq 12.66}
 \end{align}
Since $w_K=u-p-v_K$ in $Q_{2R}$,
we plug  \eqref{eq1.56} with $j=0$, %\eqref{eq10.57},
 and \eqref{eq 12.67} to \eqref{eq 12.66} and obtain
\begin{align*}
\|u-p-q_K\|_{L_\infty(Q_r)}\le CR^\sigma\sup_{a\in \cA}\|f_a\|_{L_\infty}+Cr^{\sigma+\alpha}R^{-\alpha}\big([u]^t_{\Lambda^1}+[u]^\ast_{\sigma}
+[Du]^t_{\frac{\sigma-1}{\sigma}}\big).
  \end{align*}
Dividing both sides of the inequality above  by $r^{\sigma}$, we have
\begin{align*}
  &r^{-\sigma}\|u-p-q_K\|_{L_\infty(Q_r)}\\
  &\le C(R/r)^{\sigma}\sup_{a\in \cA}\|f_a\|_{L_\infty}+C(r/R)^{\alpha}
\big([u]^t_{\Lambda^1}
+[u]^\ast_{\sigma}+[Du]^t_{\frac{\sigma-1}{\sigma}}\big).
\end{align*}
%By \eqref{eq 1126.1}, the inequality above is written as
 %\begin{align} \nonumber
 %&\|u-P(t,x)-Tv_0\|_{L_\infty(B_r)}\le CR^\sigma+Cr^{\sigma+\alpha}(R^{-\alpha-\sigma}\|v\|_{L_\infty(Q_R)}\\ \nonumber
 %&+R^{-\sigma}\sum_{j=1}^\infty 2^{-j\sigma}[u-P(t,x)]_{\alpha,(-R^\sigma,0)\times(B_{2^jR}\setminus B_{2^{j-1}R})})\\ \nonumber
 %&\le CR^\sigma+Cr^{\sigma+\alpha}R^{-\alpha-\sigma}(\|u-P(t,x)\|_{L_\infty(Q_R)}+\|u-P(t,x)-v\|_{L_\infty(Q_R)})\\ \label{eq 1126.2}
 %&+r^{\alpha+\sigma}R^{-\sigma}\sum_{j=1}^\infty 2^{-j\sigma}[u-P(t,x)]_{\alpha/\sigma,\alpha,(-R^\sigma,0)\times(B_{2^jR}\setminus B_{2^{j-1}R})}.
 %\end{align}
 %Since
 %\begin{align*}
 %\|u-Tu_0\|\le R^{\sigma}[u]_{\sigma,B_R},\\
 %[u-Tu_0]_{\alpha,B_R}\le CR^{\sigma-\alpha}[u]_{\sigma,B_R},
 %\end{align*}
%from  \eqref{eq 1126.2} it implies
 %\begin{align*}
 %&\|u-Tu_0-Tv_0\|_{L_\infty(B_r)}\\
 %&\le CR^\sigma+Cr^{\sigma+\alpha} R^{-\sigma-\alpha}(R^{\sigma}\|u-P(t,x)\|_{L_\infty(Q_R)}+CR^\sigma)\\
% &+Cr^{\sigma+\alpha}R^{-\sigma}\sum_{j=1}^\infty 2^{-j\sigma}(R^{\sigma-\alpha}[u]^*_{\sigma,(-R^\sigma)\times\bR^d}+C(R^{-\alpha}+R^{2\sigma-\alpha}+R^{\sigma-\alpha})[u]^t_{\Lambda^1})\\
 %&\le CR^\sigma+Cr^{\sigma+\alpha}R^{-\alpha}\|u-P(x,t)\|_{L_\infty(Q_R)}+Cr^{\sigma+\alpha}R^{-\alpha}+Cr^{\sigma+\alpha}R^{-\alpha}[u]_{\sigma},
 %\end{align*}
Set $r=R/M$, where $M\ge 2$ is a constant to be determined. Note that the center of the cylinder can be replaced by any point $(t,x)$ in $\bR_0^{d+1}$, i.e.,
\begin{align}
                                    \label{eq7.28}
   &r^{-\sigma}\|u-p-q_K\|_{L_\infty(Q_r(t,x))}\nonumber\\
   &\le CM^{\sigma}\sup_{a\in \cA}\|f_a\|_{L_\infty}+CM^{-\alpha}
\big([u]^t_{\Lambda^1}+[u]^\ast_{\sigma}
+[Du]^t_{\frac{\sigma-1}{\sigma}}\big),
 \end{align}
which together with Lemma \ref{lem5.4} implies
 \begin{align}\nonumber
& [u]^t_{\Lambda^1}+[u]^\ast_{\sigma}
+[Du]^t_{\frac{\sigma-1}{\sigma}}\\
 & \le C  \sup_{r>0}\sup_{(t,x)\in \bR_0^{d+1}} r^{-\sigma}\inf_{p\in \mathcal{P}_1}\|u-p\|_{L_\infty(Q_r(t,x))}\nonumber\\
  &\le CM^{\sigma}\sup_{a\in \cA}\|f_a\|_{L_\infty}+CM^{-\alpha}
\big([u]^t_{\Lambda^1}+[u]^\ast_{\sigma}
+[Du]^t_{\frac{\sigma-1}{\sigma}}\big).\label{eq 2.43}
 \end{align}
 By taking $M$ sufficiently large in \eqref{eq 2.43} so that $CM^{-\alpha}<1/2$, we obtain
\begin{equation*}
[u]^t_{\Lambda^1}+[u]^\ast_{\sigma}+[Du]^t_{\frac{\sigma-1}{\sigma}}\le C\sup_{a\in \cA}\|f_a\|_{L_\infty}.
\end{equation*}

In the case when $\sigma=1$, by Lemma \ref{lem5.5b} and \eqref{eq 1126.1},
\begin{equation*}
[v_K]_{1+\alpha,1+\alpha;Q_{R/2}}\le C\big(%R^{-\alpha-1}\|v_K\|_{L_\infty((-R,0)\times B_{2R})}+
 R^{-1}[v_K]_{\alpha/\sigma,\alpha;(-R^\sigma,0)\times B_{2R}}+R^{-\alpha}[u]_{\Lambda^1}\big).%\label{eq 12.121b}
\end{equation*}
Then by the same proof, similar to \eqref{eq7.28}, we get
\begin{align*}
r^{-1}\|u-p-q_K\|_{L_\infty(Q_r(t,x))}\le CM\sup_{a\in \cA}\|f_a\|_{L_\infty}+CM^{-\alpha}[u]_{\Lambda^1},
\end{align*}
which together with Lemma \ref{lem5.4} gives
$$
[u]_{\Lambda^1}\le C\sup_{a\in \cA}\|f_a\|_{L_\infty}
$$
by taking $M$ sufficiently large.
The theorem is proved.
\end{proof}

Next, we provide a sketched proof of Corollary \ref{cor 1.3}.
\begin{proof}[Proof of Corollary \ref{cor 1.3}]
We only consider the case $\sigma>1$ and  divide the proof into three steps.

{\em Step 1.} For $k=1,2,\ldots$, denote $Q^k := Q_{1-2^{-k}}$.
Let $\eta_k\in C_0^\infty(Q^{k+1})$ be a sequence of nonnegative smooth cutoff functions satisfying
$\eta\equiv 1$ in $Q^{k}$, $|\eta|\le 1$ in $Q^{k+1}$,  $\|\partial_t^jD^i\eta_k\|_{L_\infty}\le C2^{k(i+j)}$ for each $i,j\ge 0$.  Set $v_k := u\eta_k$ to be a smooth function and notice that in $\bR^{d+1}_0$,
\begin{align*}
\partial_tv_k&=\eta_k \partial_tu+\partial_t\eta_k u%=\eta\inf_{a\in \mathcal{A}} (L_au+f_a)+\eta_tu
=\inf_{a\in \mathcal{A}}(\eta_k L_a u+\eta_k f_a+\partial_t\eta_k u)\\
&=\inf_{a\in \mathcal{A}}(L_a v_k+h_{ka}+\eta_k f_{a}+\partial_t\eta_k u),
\end{align*}
where
\begin{equation*}
h_{ka}=\eta_k L_a u-L_a v_k
=\int_{\bR^d}
\xi_k(t,x,y)K_a(y)\,dy,
\end{equation*}
and
\begin{equation*}
\xi_k(t,x,y) = u(t,x+y)(\eta_k(t,x+y)-\eta_k(t,x))-y^T D\eta_k(t,x)u(t,x).
\end{equation*}
Obviously, we have
\begin{align*}
\sup_{a\in \cA}\|\eta_k f_a\|_{L_\infty}\le \sup_{a\in \cA}\|f_a\|_{L_\infty(Q_1)}\quad\text{and}\quad
\|\partial_t\eta_k u\|_{L_\infty}\le C2^k\|u\|_{L_\infty(Q_1)}.
\end{align*}

{\em Step 2.} We  estimate the $L_\infty$ norm of $h_{ka}$.
By the fundamental theorem of calculus,
\begin{align*}
\xi_k(t,x,y) = y^T\int_{0}^1 u(t,x+y)D\eta_k(t,x+sy)-u(t,x)D\eta_k(t,x)\,ds.
\end{align*}
For $|y|\ge 2^{-k-3}$,
$$
|\xi_k(t,x,y)|\le C2^{k}|y|\|u\|_{L_\infty((-1,0)\times \bR^d)}.
$$
For $|y|<2^{-k-3}$, we can further write
\begin{align*}
\xi_k(t,x,y)& = y^T\int_{0}^1(u(t,x+y)-u(t,x))D\eta_k(t,x+sy)\\
&\quad +u(t,x)(D\eta_k(t,x+sy)-D\eta_k(t,x))\,ds,
\end{align*}
where the second term on the right-hand side is bounded by $C2^{2k}|y|^2|u(t,x)|$. To estimate the first term, we consider two cases: when $|x|\ge1-2^{-k-2}$, because $|y|<2^{-k-3}$, $\xi_k(t,x,y)\equiv 0$; when $|x|<1-2^{-k-2}$, we have
\begin{equation*}
\Big|y^T\int_0^1(u(t,x+y)-u(t,x))D\eta_k(t,x+sy)\,ds\Big|\le C2^{k}|y|^2\|Du\|_{L_\infty(Q^{k+3})}.
\end{equation*}
Hence for $|y|<2^{-k-3}$,
\begin{equation*}
|\xi_k(t,x,y)|\le C|y|^2\big(2^{2k}|u(t,x)|+2^{k}\|Du\|_{L_\infty(Q^{k+3})}\big).
\end{equation*}
Combining with the case when $|y|>2^{-k-3}$, we see that
\begin{equation*}
                                %\label{eq11.26}
\|h_{ka}\|_{L_\infty}\le C2^{\sigma k}\|u\|_{L_\infty((-1,0)\times \bR^d)}+C2^{(\sigma-1) k}\|Du\|_{L_\infty(Q^{k+3})}.
\end{equation*}

{\em Step 3.} We apply Theorem \ref{thm 2} to $v_k$ and use the bounds in the previous steps to obtain
\begin{align*}
&[v_k]^t_{\Lambda^1}+[v_k]^\ast_\sigma+[v_k]^t_{\frac{\sigma-1}{\sigma}}
\le C\sup_{a\in \cA}\|f_a\|_{L_\infty(Q_1)}+C2^k\|u\|_{L_\infty(Q_1)}\\
& \quad
+C2^{\sigma k}\|u\|_{L_\infty((-1,0)\times \bR^d)}
+C2^{(\sigma-1) k}\|Du\|_{L_\infty(Q^{k+3})},
\end{align*}
where $C$ depends on $d$, $\lambda$, $\Lambda$, and $\sigma$, but independent of $k$. Since $\eta_k\equiv 1$ in $Q^k$,  it follows that
\begin{align}
		\nonumber
&[u]^t_{\Lambda^1(Q^k)}+[u]^\ast_{\sigma;Q^{k}}+[Du]^t_{\frac{\sigma-1}{\sigma};Q^k}\\
		\label{eq12.141}
&\le C2^{2 k}\|u\|_{L_\infty((-1,0)\times \bR^d)}+C2^{k}\|Du\|_{L_\infty(Q^{k+3})}+C\sup_{a\in \cA}\|f_a\|_{L_\infty(Q_1)}.
\end{align}
By the interpolation inequality, for any $\epsilon\in(0,1)$
\begin{align}
\label{eq12.142}
\|Du\|_{L_\infty(Q^{k+3})}
\le \epsilon[u]^\ast_{\sigma;Q^{k+3}}+C\epsilon^{-\frac {1} {\sigma-1}}\|u\|_{L_\infty(Q^{k+3})}.
\end{align}
Set $N = 1/(\sigma-1)$ and notice $N>1$.
Combining \eqref{eq12.141} and \eqref{eq12.142} with $\epsilon = C^{-1}2^{-k-10N}$, we obtain
\begin{align*}
&[u]^t_{\Lambda^1(Q^k)}+[u]^\ast_{\sigma;Q^k}
+[Du]^t_{\frac{\sigma-1}{\sigma};Q^k}\le C2^{2k+(k+10N)N}\|u\|_{L_\infty((-1,0)\times \bR^d)}\\
&\quad + 2^{-10N}[u]^\ast_{\sigma;Q^{k+3}}
+C\sup_{a\in \cA}\|f_a\|_{L_\infty(Q_1)}.
\end{align*}
Then we multiply $2^{-3kN}$ to both sides of the inequality above and get
\begin{align*}
	&2^{-3kN}([u]^t_{\Lambda^1(Q^k)}+[u]^\ast_{\sigma;Q^k}+[Du]^t_{\frac{\sigma-1}{\sigma};Q^k})\\
	&\le C2^{2(1-N)k}\|u\|_{L_\infty((-1,0)\times \bR^d)}+2^{-3N(k+3)-1}[u]^\ast_{\sigma;Q^{k+3}}
+C2^{-3kN}\sup_a\|f_a\|_{L_\infty(Q_1)}.
\end{align*}
We sum up the both sides of the inequality above and obtain
\begin{align*}
	&\sum_{k=1}^\infty2^{-3kN}([u]^t_{\Lambda^1(Q^k)}+[u]^x_{\sigma;Q^k}+[Du]^t_{\frac{\sigma-1}{\sigma};Q^k})\\
	&\le C\sum_{k=1}^{\infty}2^{2(1-N)k}\|u\|_{L_\infty((-1,0)\times \bR^d)}
+C\sum_{k=1}^\infty2^{-3kN}\sup_a\|f_a\|_{L_\infty(Q_1)}\\
	&\quad +\frac{1}{2}\sum_{k=4}^\infty2^{-3kN}([u]^t_{\Lambda^1(Q^k)}
+[u]^\ast_{\sigma;Q^{k}}+[Du]^t_{\frac{\sigma-1}{\sigma};Q^{k}}),
\end{align*}
which further implies that
\begin{align*}
&\sum_{k=1}^\infty2^{-3kN}([u]^t_{\Lambda^1(Q^k)}+[u]^\ast_{\sigma;Q^k}+[Du]^t_{\frac{\sigma-1}{\sigma};Q^k})\\
&\le C\|u\|_{L_\infty((-1,0)\times \bR^d)}+C\sup_a\|f_a\|_{L_\infty(Q_1)},
\end{align*}
where $C$ depends on $d$, $\lambda$, $\Lambda$, and $\sigma$. In particular,  by taking $k = 1$,  the corollary is proved.
\end{proof}

\section*{appendix}
In the appendix, we first provide a sketch of the proof of Corollary \ref{weak_harnack}.
\begin{proof}
By a scaling argument, we assume that $r=1$.  Let $k\ge 1$ be a constant to be determined later. Set $\hat{\delta}=\delta/k$. Let $(t_0,x_0)\in \overline{Q_{\delta/2}}$ be such that $u(t_0,x_0)=\inf_{Q_{\delta/2}}u$. Since $\sigma\in (1,2)$, we have $2^{-\sigma}\le 1-4^{-\sigma}$. By a scaling and translation of the coordinates, we apply Proposition \ref{thm 8.311} to $u$ in $Q_{\hat{\delta}}(t_0,x_0)$ and obtain
\begin{align*}
\hat\delta^{-(\sigma+d)/\varepsilon}\|u\|_{L_\epsilon(\hat{Q}_1)}
\le C_1\Big(\inf_{Q_{\hat{\delta}}(t_0,x_0)}u+C\hat{\delta}^\sigma \Big),
\end{align*}
where $\hat{Q}_1=Q_{\hat{\delta}}(t_1,x_0)$ and $t_1=t_0-(4^\sigma-1)\hat\delta^\sigma$. For any $x_1\in B_{\hat{\delta}/2}(x_0)$,
\begin{align*}
		\nonumber
&\|u\|_{L_\epsilon(\hat{Q}_1)}\ge \|u\|_{L_\epsilon((t_1-\delta^\sigma,t_1
)\times B_{\hat{\delta}/2}(x_1))}\\
&\ge C_2\hat{\delta}^{(\sigma+d)/\varepsilon}\inf_{(t_1-\delta^\sigma,t_1
)\times B_{\hat{\delta}/2}(x_1)}u\ge C_2\hat{\delta}^{(\sigma+d)/\varepsilon}\inf_{Q_{\hat\delta}(t_1,x_1)}u,
\end{align*}
where $C_2>0$ depending only on $d$. Therefore,
\begin{align*}
\inf_{Q_{\hat\delta}(t_1,x_1)}u\le C_1/C_2\Big(\inf_{Q_{\hat{\delta}}(t_0,x_0)}u+C\hat{\delta}^\sigma \Big)  .
\end{align*}
Applying Proposition \ref{thm 8.311} again, %to $u$ in $Q_{\hat{\delta}}(t_0-(4\hat{\delta})^\sigma+\hat{\delta}^\sigma,x):=Q_{\hat{\delta}}(t_1,x_1)$,
we have
\begin{align*}
\hat\delta^{-(\sigma+d)/\varepsilon}\|u\|_{L_\epsilon(\hat{Q}_2)}
\le C_1\Big(\inf_{Q_{\hat{\delta}}(t_1,x_1)}u+C\hat{\delta}^\sigma \Big),
\end{align*}
where $\hat{Q}_2=Q_{\hat{\delta}}(t_2,x_1)$ and $t_2=t_0-2(4^\sigma-1)\hat\delta^\sigma$,
and for any $x_2\in B_{\hat\delta/2}(x_1)$,
\begin{align*}
\inf_{Q_{\hat\delta}(t_2,x_2)}u\le C_1/C_2 \Big(\inf_{Q_{\hat{\delta}}(t_1,x_1)}u+C\hat{\delta}^\sigma \Big).
\end{align*}
By induction, or any $x_{n-1}\in B_{(n-1)\hat\delta/2}(x_0)\cap B_1$,
\begin{align*}
&\hat\delta^{-(\sigma+d)/\varepsilon}\|u\|_{L_\epsilon(\hat{Q}_n)}
\le C_3
\Big(\inf_{Q_{\hat{\delta}}(t_0,x_0)}u+C\hat{\delta}^\sigma \Big)\\
&\le C_3
\big(u(t_0,x_0)+C\hat{\delta}^\sigma \big)
=C_3
\Big(\inf_{Q_{\delta/2}}u+C\hat{\delta}^\sigma \Big),
\end{align*}
where $\hat{Q}_n=Q_{\hat{\delta}}(t_{n},x_{n-1})$, $t_{n}=t_0-n(4^\sigma-1)\hat{\delta}^\sigma$, and $C_3$ is a constant depending only on $\lambda$, $\Lambda$, $d$, and $n$. Notice that $|x_0|\le\delta/2$, $t_0\in[-(\delta/2)^\sigma,0]$, and $\sigma>1$. We can choose $k\ge 1$ in a suitable range depending only on $\sigma_2$ and $\delta$, and then  $n\le [2k/\delta]+1$, such that
$\hat Q_n$ runs through $(-\delta^\sigma,-\delta^\sigma+(4^\sigma-1)\hat{\delta}^\sigma)\times B_1$.
%\begin{equation*}
%		%\label{eq 3.21}
%-\delta/2+(n-1)\hat{\delta}/2\ge 1,\quad t_0-n(4^\sigma-1)\hat{\delta}^\sigma\in ( -\delta^\sigma,-\delta^\sigma+(4^\sigma-1)\hat{\delta}^\sigma).
%\end{equation*}
%Then we fix $k,n$ and from the argument above we have
%\begin{align*}
%C_n(\inf_{Q_{\delta/2}}u+C)\ge \|u\|_{L_\epsilon(\hat{Q}_n)},
%\end{align*}
%where $\hat{Q}_n\subset \tilde{Q_\delta}$.
Finally, by applying Proposition \ref{thm 8.311} again and using a simple covering argument, we prove the corollary.
\end{proof}

Finally, we give the proofs of Lemmas \ref{lemma 12.31}, \ref{lem5.2}, and \ref{lem5.5}.

\begin{proof}[Proof of Lemma \ref{lemma 12.31}]
By mollification, it suffices to prove \eqref{eq5.01} assuming that $f\in C^\alpha((-1,0))$.
Let $x,y\in (-1,0)$, $y<x$, and $h:=x-y$. When $h\ge 1/3$,
$$
\frac{|f(x)-f(y)|}{h^\alpha}\le 2\cdot 3^\alpha\|f\|_{L_\infty((-1,0))}.
$$
When $h<1/3$, either $x<-1/3$ or $y>-2/3$. If $x<-1/3$, then $2x-y\in (x,0)$ and
\begin{align*}
\frac{|f(x)-f(y)|}{h^\alpha}&\le \frac{1}{2}\frac{|f(2x-y)+f(y)-2f(x)|}{h^\alpha}
+\frac{1}{2}\frac{|f(2x-y)-f(y)|}{h^\alpha}\\
&\le \frac{3^{\alpha-1}}{2}[f]_{\Lambda^1((-1,0))}
+\frac{1}{2^{1-\alpha}}[f]_{\alpha;(-1,0)}.
\end{align*}
The case when $y>-2/3$ is similar. Therefore,
\begin{equation*}
[f]_{\alpha;(-1,0)}\le 2\cdot 3^\alpha\|f\|_{L_\infty((-1,0))}+\frac{1}{2^{1-\alpha}}[f]_{\alpha;(-1,0)}
+\frac{3^{\alpha-1}}{2}
[f]_{\Lambda^1((-1,0))},
\end{equation*}
which yields \eqref{eq5.01}.
\end{proof}

\begin{proof}[Proof of Lemma \ref{lem5.2}]
First we consider the case when $\beta=1$. Integrating by part and  noting that $\eta''$ is an even function and $\int \eta''=0$, we obtain
\begin{align*}
&\big|\partial_t^2u^{(R)}\big|
=\Big|\int_{\bR}\big(2\partial_t^2 u(t-R^\sigma s,x)
-\partial_t^2 u(t-2R^\sigma s,x)\big)\eta(s-1)\,ds\Big|\\
&=\Big|\int_{\bR}R^{-2\sigma}
\Big(2u(t-R^\sigma s,x)-\frac 1 4 u(t-2R^\sigma s,x)\Big)
\eta''(s-1)\,ds\Big|\\
&=\Big|\int_{\bR}R^{-2\sigma}
\Big(u(t-R^\sigma s,x)+u(t-R^\sigma (2-s),x)-2u(t-R^\sigma ,x)\\
&\qquad -\frac 1 8
\big(u(t-2R^\sigma s,x)+u(t-2R^\sigma (2-s),x)-2u(t-2R^\sigma ,x)\big)\Big)
\eta''(s-1)\,ds\Big|\\
&\le CR^{-\sigma}[u]^t_{\Lambda^1}.
\end{align*}
For $\beta\in (0,1)$, when $r\ge R^\sigma$,
$$
r^{-1-\beta}|u^{(R)}(t,x)+u^{(R)}(t-2r,x)-2u^{(R)}(t-r,x)|
\le r^{-\beta}[u]^t_{\Lambda^1}\le R^{-\beta\sigma}[u]^t_{\Lambda^1}.
$$
When $r\in (0,R^\sigma)$, by \eqref{eq6.58} with $\beta=1$,
\begin{align*}
&r^{-1-\beta}|u^{(R)}(t,x)+u^{(R)}(t-2r,x)-2u^{(R)}(t-r,x)|
\le r^{1-\beta}[\partial_t u^{(R)}]^t_{1;\bR^{d+1}_0}\\
&\le Cr^{1-\beta}R^{-\sigma}[u]^t_{\Lambda^1}\le CR^{-\beta\sigma}[u]^t_{\Lambda^1}.
\end{align*}
From the above two inequalities, we immediately get \eqref{eq6.58}.
\end{proof}

\begin{proof}[Proof of Lemma \ref{lem5.5}]
We first estimate the H\"older semi-norm in $x$. By the interpolation inequality,
\begin{align}\nonumber
&[u-p]_{\alpha;(-R^\sigma,0)\times B_{2^jR}}^\ast\\
&\le (2^jR)^{-\alpha}\|u-p\|_{L_\infty((-R^\sigma,0)\times B_{2^jR})}+(2^jR)^{\sigma-\alpha}[u-p]^\ast_{\sigma;(-R^\sigma,0)\times B_{2^jR}}.\label{eq 12.43}
\end{align}
Because $p$ is linear,
\begin{equation}\label{eq 12.41}
[u-p]^\ast_{\sigma;(-R^\sigma,0)\times B_{2^jR}}=[u]^\ast_{\sigma;(-R^\sigma,0)\times B_{2^jR}}.
\end{equation}
%Moreover, for fixed $t\in (-R^\sigma,0)$
%\begin{equation*}
%\|u-p\|_{L_\infty (B_{2^jR})}\le\|u-\tilde{u}\|_{L_\infty (B_{2^jR})}+\|\tilde{u}-p\|_{L_\infty (B_{2^jR})}.
%\end{equation*}
Since $\eta$ has unit integral,  %by Lemma \ref{lemma 12.161}
we have
\begin{align}
&\big|u^{(R)}(t,x)-u(t,x)\big|\nonumber\\
&=\Big|\int_{\bR}\big(2u(t-R^\sigma s,x)-u(t-2R^\sigma s,x)
-u(t,x)\big)\eta(s-1)\,ds\Big|\le CR^\sigma[u]^t_{\Lambda^1}.
                    \label{eq10.31}
\end{align}
%Then by a scaling argument, for $t\in (-R^\sigma,0)$ we obtain
%\begin{equation}\label{eq 12.45}
%\|u(t,x)-\tilde{u}(t,x)\|_{L_\infty(\bR^d)}\le CR^\sigma[u]^t_{\Lambda^1}.
%\end{equation}
Furthermore,  for any $(t,x)\in (-R^\sigma,0)\times B_{2^jR}$,
\begin{align*}
&\big|u^{(R)}(t,x)-p(t,x)\big|
=\big|u^{(R)}(t,x)-u^{(R)}(0,0)-\partial_tu^{(R)}(0,0)t
-x^TDu^{(R)}(0,0)\big|\\
&\le \big|u^{(R)}(t,x)-u^{(R)}(t,0)-x^TDu^{(R)}(t,0)\big|\\
&\quad+\big|u^{(R)}(t,0)-u^{(R)}(0,0)-\partial_tu^{(R)}(0,0)t\big|
+\big|x^TDu^{(R)}(0,0)-x^TDu^{(R)}(t,0)\big|\\
&\le (2^jR)^\sigma[u]^\ast_{\sigma}+R^{2\sigma}
\big\|\partial^2_tu^{(R)}\big\|_{L_\infty(-R^\sigma,0)\times B_{2^jR}}+C2^jR^{\sigma}\big[Du^{(R)}\big]^t_{\frac{\sigma-1}{\sigma}}.
\end{align*}
Using Lemma \ref{lem5.2}, we have
\begin{align*}
\big\|u^{(R)}-p\big\|_{L_\infty((-R^\sigma,0)\times B_{2^jR})}\le C(2^jR)^\sigma[u]^\ast_{\sigma}+C2^jR^{\sigma}[Du]^t_{\frac{\sigma-1}{\sigma}}
+CR^\sigma[u]_{\Lambda^1}^t,
\end{align*}
which together with \eqref{eq10.31} implies that
\begin{equation}
\|u-p\|_{L_\infty((-R^\sigma,0)\times B_{2^jR})}\label{eq 12.42}
\le  C(2^jR)^\sigma[u]^\ast_{\sigma}+C2^jR^{\sigma}[Du]^t_{\frac{\sigma-1}{\sigma}}
+CR^\sigma[u]_{\Lambda^1}^t.
\end{equation}
We plug \eqref{eq 12.41} and \eqref{eq 12.42} in \eqref{eq 12.43} and get
\eqref{eq 12.62}.

Next we estimate the H\"older semi-norm in $t$. Obviously,
\begin{align*}
[u-p]^t_{\alpha/\sigma;(-R^\sigma,0)\times  B_{2^jR}}\le [u-p]^t_{\alpha/\sigma;(-2^{j\sigma/2}R^\sigma,0)\times B_{2^jR}}.
\end{align*}
From Lemma \ref{lemma 12.31} and scaling, we have
\begin{align*}
  &[u-p]^t_{\alpha/\sigma;(-2^{j\sigma/2}R^\sigma,0)\times B_{2^jR}}\\
  &\le C(2^{j/2}R)^{-\alpha}\|u-p\|_{L_\infty((-2^{j\sigma/2}R^\sigma,0)\times B_{2^jR})}+C(2^{j/2}R)^{\sigma-\alpha}[u-p]^t_{\Lambda^1}
  \\
  &\le C(2^{j/2}R)^{-\alpha}\|u-p\|_{L_\infty((-2^{j\sigma/2}R^\sigma,0)\times B_{2^jR})}+C(2^{j/2}R)^{\sigma-\alpha}[u]^t_{\Lambda^1}.
\end{align*}
We follow the proof of \eqref{eq 12.42} to estimate
\begin{align*}
  &\|u-p\|_{L_\infty((-2^{j\sigma/2}R^\sigma,0)\times B_{2^jR})}\\
  &\le\|u-u^{(R)}\|_{L_\infty((-2^{j\sigma/2}R^\sigma,0)\times B_{2^jR})}+\|u^{(R)}-p\|_{L_\infty((-2^{j\sigma/2}R^\sigma,0)\times B_{2^jR})}\\
  &\le CR^\sigma[u]^t_{\Lambda^1}+C(2^jR)^\sigma[u]^\ast_\sigma+C2^{j\sigma}R^\sigma
[u]^t_{\Lambda^1}+C2^{j(\sigma+1)/2}R^\sigma[D {u}]^t_{\frac{\sigma-1}{\sigma}}.
  \end{align*}
Therefore, we reach \eqref{eq 12.61}. The lemma is proved.
\end{proof}

\begin{proof}[Proof of Lemma \ref{lem5.5b}]
We modify the proof of Lemma \ref{lem5.5}. Similar to Lemma \ref{lemma 12.31}, we have
\begin{align}\nonumber
&[u-p]_{\alpha,\alpha;(-R,0)\times B_{2^jR}}
\le [u-p]_{\alpha,\alpha;Q_{2^jR}}\\
&\le (2^jR)^{-\alpha}\|u-p\|_{L_\infty(Q_{2^jR})}
+(2^jR)^{1-\alpha}[u-p]_{\Lambda_1;Q_{2^jR}}\nonumber\\
&\le (2^jR)^{-\alpha}\|u-p\|_{L_\infty(Q_{2^jR})}
+(2^jR)^{1-\alpha}[u]_{\Lambda_1}.\label{eq 12.43b}
\end{align}
Since $\eta$ and $\zeta$ have unit integral and $\zeta$ is radial,  %by Lemma \ref{lemma 12.161}
we have
\begin{align}
&\big|u^{(R)}(t,x)-u(t,x)\big|\nonumber\\
&=\Big|\int_{\bR^{d+1}}\big(2u(t-R s,x-Ry)-u(t-2R s,x-Ry)
-u(t,x)\big)\eta(s-1)\zeta(y)\,dy\,ds\Big|\nonumber\\
&=\Big|\int_{\bR^{d+1}}\big(2u(t-R s,x-Ry)-u(t-2R s,x-Ry)
-u(t,x-Ry)\big)\nonumber\\
&\qquad+\frac 1 2 \big(u(t,x+Ry)+u(t,x-Ry)
-2u(t,x)\big)\eta(s-1)\zeta(y)\,dy\,ds\Big|\nonumber\\
&\le CR[u]_{\Lambda^1}.
                    \label{eq10.31b}
\end{align}
For any $(t,x)\in Q_{2^jR}$, similar to Lemma \ref{lem5.2} with $\beta=\alpha/2$,
\begin{equation*}
\big|u^{(R)}(t,x)-p(t,x)\big|
\le (2^jR)^{1+\alpha/2}[u^{(R)}]_{1+\alpha/2,1+\alpha/2;Q_{2^jR}}
\le 2^{j(1+\alpha/2)}R[u]_{\Lambda^1},
\end{equation*}
which together with \eqref{eq10.31b} implies that
\begin{equation}
                                \label{eq 12.42b}
\|u-p\|_{L_\infty((-2^jR,0)\times B_{2^jR})}
\le  2^{j(1+\alpha/2)}R[u]_{\Lambda^1}.
\end{equation}
We plug \eqref{eq 12.42b} in \eqref{eq 12.43b} and get \eqref{eq 12.62b}.
The lemma is proved.
\end{proof}

\section*{Acknowledgments}
The authors would like to thank the referees for their careful review as well as many valuable comments and suggestions.

%%%%%%%%%%%%%%%%%%%%%%%%%%%%%%%%%%%%%%

\bibliographystyle{plain}
\def\cprime{$'$}

\end{document}